\documentclass{amsproc}

\usepackage{tikz}
\usepackage{mathdots}
\usetikzlibrary{automata,cd,shapes,backgrounds}
\usepackage{faktor} 
\usepackage{amssymb}
\usepackage{mathtools}

\usepackage{leftidx}
\newcommand{\biset}[3]{\leftidx{_#2}{#1}{_#3}}
\newcommand{\orbits}[3]{{#1}/{#3}}%

\makeatletter
\DeclareRobustCommand{\rvdots}{%
  \vbox{%
    \baselineskip4\p@\lineskiplimit\z@
    \kern-\p@
    \hbox{.}\hbox{.}\hbox{.}
  }}
\makeatother

\newtheorem{theorem}{Theorem}[section]
\newtheorem{lemma}[theorem]{Lemma}

\theoremstyle{definition}
\newtheorem{definition}[theorem]{Definition}
\newtheorem{definitionlemma}[theorem]{Definition and Lemma}
\newtheorem{example}[theorem]{Example}

\theoremstyle{remark}
\newtheorem{remark}[theorem]{Remark}

\newtheorem{corollary}[theorem]{Corollary}

\numberwithin{equation}{section}
\DeclareMathOperator{\Aut}{Aut}

\DeclareMathOperator{\Sym}{Sym}

\DeclareMathOperator{\Fix}{Fix}
\DeclareMathOperator{\id}{id}

\newcommand\C{\mathbb {C}} %
\newcommand\D{\mathbb {D}} %
\newcommand\Z{\mathbb {Z}} %
\newcommand\R{\mathbb {R}} %
\newcommand\N{\mathbb {N}} %
\newcommand\A{\mathbf {A}} %
\newcommand\M{\mathcal {M}} %
\newcommand\NN{\mathcal {N}} %
\newcommand\LL{\mathcal {L}} %
\newcommand\Sing{\mathbf {S}} %
\newcommand\Spider{\mathbb {S}} %
\newcommand\Speiser{\mathcal {S}} %
\newcommand\Spine{\Gamma'_s} %
\newcommand\LineComplex{{\Gamma_s}} %
\newcommand\Schreier{\Gamma} %
\newcommand\CoreGraph{\hat\Gamma} %
\newcommand\Rose{\Gamma'} %
\newcommand\PP{\mathbf {P}} %
\newcommand\Id{\mathbf {1}} %
\newcommand\gId{\textcolor{gray}{\Id}}

\newcommand{\wwr}[3]{#1 \wr_{#3} #2}
\definecolor{orangehex}{rgb}{1.,0.4980392156862745,0.}
\definecolor{lilahex}{rgb}{0.4980392156862745,0.,1.}
\begin{document}

\title{Iterated Monodromy Groups of Entire Maps and Dendroid Automata}

\author{Bernhard Reinke}
\address{Institut de Mathématiques (UMR CNRS7373) \\
Campus de Luminy \\
163 avenue de Luminy --- Case 907 \\
13288 Marseille 9 \\
France}
\curraddr{Sorbonne Université and Université de Paris, CNRS, IMJ-PRG, F-75005 Paris, France.}
\email{}
\thanks{We gratefully acknowledge support by the following grants of the European Research Council: Advanced Grant 695 621 HOLOGRAM and
by Consolidator Grant 818 737 Emergence.}

\subjclass[2020]{37F10; 37B10; 20E08} %

\keywords{iterated monodromy group; transcendental function; 
amenability; self-similar groups; Schreier graphs}

\date{}

\begin{abstract}
	This paper discusses iterated monodromy groups for transcendental
  functions. We show that
  for every post-singularly finite entire transcendental function, the iterated monodromy action
  can be described by bounded activity automata of a special form, called ``dendroid automata''.
  In particular, we conclude that the iterated monodromy group of a 
	post-singularly finite entire function is amenable if and only if the monodromy group is.
\end{abstract}

\maketitle

\section{Introduction}
Iterated Monodromy Groups are well-established and very useful objects in the
dynamics of (post-singularly finite) iterated rational maps, especially
polynomials. They have been successfully used in classification problems of
polynomials, such as the twisted rabbit problem \cite{Bartholdi2006}. One
important tool to describe iterated monodromy groups of polynomials are
dendroid automata \cite{Nekrashevych2009}; in particular, they are
self-similar groups on finite alphabets that act via bounded activity automata
in the sense of \cite{sidki2000}, so they are amenable by
  \cite{Bartholdi2010}.

  The goal of this paper is to extend this theory to (post-singularly finite) transcendental entire functions. Our first main result is the following. 

\begin{theorem}[Structure result]
  Let $f$ be a post-singularly finite entire function. Then the iterated monodromy group of $f$ is a self-similar group on an infinite alphabet, generated by a dendroid automaton. In particular, it is a self-similar group of bounded activity growth.
  \label{thm:boundedactivity}
\end{theorem}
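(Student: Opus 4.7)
The plan is to mimic Nekrashevych's treatment of post-critically finite polynomials, but replacing the finite alphabet by the (generically infinite) set of local inverse branches, with the combinatorics organized by the Speiser/line complex $\LineComplex$ of $f$ relative to $P(f)$. Choose a base point $t_0 \in \C\setminus P(f)$ and identify the IMG of $f$ with the image of $\pi_1(\C\setminus P(f),t_0)$ in $\Aut(T)$, where $T=\bigsqcup_{n\geq 0}f^{-n}(t_0)$ is the preimage tree. Because $P(f)$ is finite, $\pi_1(\C\setminus P(f),t_0)$ is free of rank $|P(f)|$, with canonical generators $\gamma_p$ given by small loops around each $p\in P(f)$; these will be the generators of the automaton.

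Next I would introduce a \emph{spider} $\Spider$: a system of disjoint arcs from each $p\in P(f)$ to $\infty$, chosen so that the complementary region is simply connected and contains $t_0$. Lifting under $f$, the components of $f^{-1}(\Spider)$ (the \emph{tracts}) are in canonical bijection with $f^{-1}(t_0)$: an unramified tract is mapped homeomorphically onto $\Spider$ and carries a unique preimage, while each tract containing a critical point maps as a finite branched cover and carries finitely many preimages, one for each sheet. The set of tracts is precisely the vertex set (on one side) of $\LineComplex$, giving the countable alphabet $X$ for the self-similar action. Iterating this construction at each preimage of $t_0$ produces the identification $f^{-n}(t_0) \leftrightarrow X^n$ needed for self-similarity, and the wreath recursion at each level is read off from the combinatorics of $\LineComplex$.

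The core step is the computation of the wreath recursion of each generator $\gamma_p$. Lift $\gamma_p$ (a small loop around $p$) to $f^{-1}$: over a tract whose closure does not meet the arc from $p$ to $\infty$, the lift is a loop based at the unique preimage of $t_0$ in that tract, contributing the identity in both permutation and section; over tracts that do meet this arc, the local degree at the corresponding preimage of $p$ is finite, so the lift permutes finitely many letters cyclically and yields exactly one nontrivial section, namely $\gamma_p^{d}$ read through the local inverse branches at the ramified preimage. This produces precisely the local combinatorial shape of a dendroid automaton (support of the action contained in a finite subtree of the Schreier graph, with a unique nontrivial section on a boundary vertex); here I would invoke the definition of dendroid automaton from the relevant earlier section of the paper and verify each axiom from the line-complex picture. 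Bounded activity growth is then immediate: a single application of the recursion produces at most one nontrivial section per generator per level, so the activity function is bounded by a constant depending only on $|P(f)|$.

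The hard part is the topological step: ensuring that the spider $\Spider$ can be chosen so that $f^{-1}(\Spider)$ consistently decomposes into tracts with the claimed tree structure, and that the identifications $f^{-n}(t_0)\leftrightarrow X^n$ are canonical enough to define a genuine self-similar action (not just a similarity up to conjugation on each level). This uses that $f$ is post-singularly finite so that $f\colon \C\setminus f^{-1}(P(f))\to \C\setminus P(f)$ is an unbranched cover and all ramification is controlled by the finitely many singular values; the rest is bookkeeping around the line complex $\LineComplex$. Once these identifications are set up, the verification of the dendroid axioms and bounded activity is local and reduces to the finite-degree case handled in \cite{Nekrashevych2009}.
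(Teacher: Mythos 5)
Your outline reproduces the easy half of the argument (pull back a spider, read off a wreath recursion, observe bounded activity), but it leaves out the idea that actually carries the theorem: the spider must be chosen \emph{invariant up to isotopy under pullback}. For an arbitrary spider $\Spider$ with simply connected complement, the construction you describe (the paper's Lemma~\ref{lem:gpullback}) only expresses the restrictions of the generators dual to $\Spider$ in terms of the generators dual to the \emph{pullback} spider $\Spider'$, which is a different spider and a different dual generating set; this yields a non-autonomous automaton, i.e.\ an isomorphism of bisets over two a priori different free bases, not a self-similar action of a fixed generating set. You acknowledge this ("canonical enough to define a genuine self-similar action, not just a similarity up to conjugation on each level") but then dismiss it as bookkeeping around $\LineComplex$. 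It is not: the paper devotes Section~6 to it. One must pass to an iterate of $f$ (harmless by Lemma~\ref{lem:imgiter}), choose a fixed basepoint $t$, route the legs at superattracting points of $\PP(f)$ through periodic internal rays landing at repelling boundary points (Böttcher coordinates, Lemma~\ref{lem:internalraysland}), invoke the expansion result imported from Mihaljevi\'c-Brandt (Lemma~\ref{lem:legperiodic}) to get eventual periodicity of the leg pullbacks $\LL^n(\gamma)$ up to homotopy, and use Epstein--Zieschang (Lemma~\ref{lem:epsteinzieschang}) to upgrade homotopy to isotopy and realize the legs disjointly. Only then is $\Spider$ isotopic to its pullback relative to $\PP(f)\cup\{t\}$, so that both sides of the biset carry the \emph{same} basis of $\pi_1(\C\setminus\PP(f),t)$ and Lemma~\ref{lem:gpullback} produces an autonomous dendroid automaton generating the iterated monodromy group. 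None of these ingredients appear in your proposal, and without them the claimed identification $f^{-n}(t_0)\leftrightarrow X^n$ does not come with a consistent recursion over a fixed group presentation.

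A secondary inaccuracy: your local analysis of lifting $\gamma_p$ only distinguishes unramified sectors (loops, trivial section) from sectors at a critical preimage (finite cycle, one nontrivial section). Over a transcendental entire function a generator can also have \emph{infinite} orbits, coming from logarithmic singularities over $p$ (and over $\infty$), where the lifts are not loops at all; the dendroid-automaton axioms used here specifically require all restrictions along such infinite orbits to be trivial, and verifying this uses the classification of faces of the Schreier graph (Lemma~\ref{lem:schreier}) together with the fact that the lifted legs are lifts of the legs of $\Spider$. This is exactly the new transcendental phenomenon the theorem is about, so it cannot be folded into the polynomial case of Nekrashevych's argument as you suggest.
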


For a precise description of dendroid automata, see Section~5, where we develop the theory of dendroid automata that act on infinite sets. Another key ingredient are periodic spiders (Section 6). 

Here is our second main result.
\begin{theorem}[Amenability of IMGs of entire functions]
  Let $f$ be a post-singularly finite entire transcendental function. Then the iterated monodromy group of $f$ is amenable if and only if the monodromy group of $f$ is amenable.
	\label{thm:amenable}
\end{theorem}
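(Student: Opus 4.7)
The plan is to prove the two implications separately; the forward direction is nearly immediate, while the reverse direction carries the entire content of the theorem.

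For the forward direction, observe that $\mathrm{Mon}(f)$ is by definition the image of $\mathrm{IMG}(f)$ in the permutation group of the first level of the tree, i.e., in $\mathrm{Sym}(X)$, where $X$ is the alphabet (the first fiber of $f$ over a basepoint). Hence $\mathrm{Mon}(f)$ is a quotient of $\mathrm{IMG}(f)$, and amenability passes to quotients.

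For the reverse direction, the idea is to combine the structural Theorem~1.1 with an appropriate generalization of the Bartholdi--Kaimanovich--Nekrashevych amenability theorem \cite{Bartholdi2010}. In the polynomial case the alphabet is finite, so $\mathrm{Sym}(X)$ is automatically amenable and bounded activity alone suffices. In the transcendental setting $X$ is countably infinite, and the natural replacement hypothesis is amenability of the image in $\mathrm{Sym}(X)$, namely of $\mathrm{Mon}(f)$. The technical goal is therefore an amenability criterion of the following shape: a self-similar group on a possibly infinite alphabet, generated by a dendroid automaton of bounded activity, is amenable provided its projection to $\mathrm{Sym}(X)$ is amenable.

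I would try to establish this by adapting the random-walk/entropy argument of \cite{Bartholdi2010}. Bounded activity ensures that along almost every infinite word the number of non-trivial sections stays bounded, which controls the ``vertical'' part of any random walk. The dendroid structure from Section~5, together with the description of orbital Schreier graphs via periodic spiders in Section~6, should give enough combinatorial control on these graphs (tree-like, with orbit-closure data parametrized by the dendroid) for an entropy or Reiter-type argument to close. Amenability of $\mathrm{Mon}(f)$ is then used to produce Følner-like data at the level of the alphabet, which one propagates down the tree using the bounded activity information. The main obstacle is that with an infinite alphabet the naive iterated permutational wreath product is no longer amenable, and generic Følner sets blow up when passed through an $X$-indexed wreath recursion; the proof must genuinely use the dendroid/bounded-activity structure at each recursive step to keep the averaging finite, likely by combining it with an extension-type criterion in the spirit of Juschenko--Nekrashevych--de la Salle for groups acting on rooted trees.
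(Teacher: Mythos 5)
Your forward direction is exactly the paper's (the monodromy group is a quotient of the iterated monodromy group), and your overall strategy for the converse --- feed the structure theorem into an amenability criterion for bounded-activity self-similar groups over an infinite alphabet --- is also the paper's strategy: it invokes Theorem~B of \cite{Reinkegroup} rather than reproving a criterion. But there is a genuine gap in the criterion you propose to establish. You state it as ``bounded-activity dendroid automaton $+$ amenable image in $\Sym(X)$ implies amenable,'' and nothing in your sketch supplies the hypothesis that the paper's criterion actually requires in addition: \emph{recurrence} of the first-level action, i.e.\ recurrence of the simple random walk on the Schreier graph of the monodromy action. This is not a technicality one can wave away with ``enough combinatorial control'': the Münchhausen/self-similar-random-walk machinery behind \cite{Bartholdi2010} is exactly where recurrence enters when the alphabet is infinite, and the paper spends a substantial part of Section~2 (the quasi-isometry between Schreier graphs and line complexes, Lemma~\ref{lem:quasi-isom}, and Lemma~\ref{lem:recurrence} resting on the classical parabolicity of line complexes of entire functions) precisely to verify this hypothesis for Speiser-class maps. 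Your sketch never produces this input, and your proposed propagation of F\o lner data down the tree from amenability of $\mathrm{Mon}(f)$ alone has no mechanism to control the infinitely many letters at each level without it; as written, the key recursive step is asserted rather than argued.

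A secondary point: the dendroid/periodic-spider model is only obtained for an iterate $f^n$ (Lemma~\ref{lem:periodicspiders} and Remark~\ref{rem:periodic}), so the first-level action of the self-similar model is the monodromy action of $f^n$, not of $f$. You therefore also need the transfer statement that amenability of the monodromy group of $f$ implies amenability of that of $f^n$ (Corollary~\ref{cor:amenableiterate}, proved via the restricted wreath product estimate of Lemma~\ref{lem:spidercomp}), together with Lemma~\ref{lem:imgiter} identifying $\IMG(f)$ with $\IMG(f^n)$. These are easy but absent from your outline. In summary: right skeleton, but the converse implication as you present it rests on an unproved (and, without recurrence, unjustified) amenability criterion, which is where the real content lies.
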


We should note that the condition on the monodromy group is clearly necessary, as the monodromy group is a quotient of the iterated monodromy group. There are in fact transcendental entire functions with non-amenable monodromy group, such as the free product $C_2 * C_2 * C_2$ (see appendix), so the condition is necessary.
We show that compositions of structurally finite entire functions have elementary amenable monodromy groups, so we have a large class of functions with amenable iterated monodromy groups.

This paper is a continuation of our work in \cite{Reinkeexp}, where we introduce iterated monodromy groups for transcendental entire functions in the setting of the exponential family, as well as \cite{Reinkegroup}, where we prove an amenability criterion for groups generated by bounded activity automata on infinite alphabets. We use this criterion to deduce Theorem~\ref{thm:amenable} from Theorem~\ref{thm:boundedactivity}.

\emph{Structure of the Paper.} In Section~2 we provide the necessary function
theoretic background for entire functions. In particular, we introduce 
Schreier graphs and spiders and compare them to the classical notion of line complexes. In
Section~3 we extend the notion of dendroid set of permutations to infinite
sets. We show that the monodromy groups of structurally finite entire
transcendental maps are elementary amenable. In Section~4 we introduce the
language of bisets for entire functions in a non-dynamical setting. We define (non-autonomous) dendroid
automata in Section~5 and show how we can pullback spiders to understand the bisets of entire functions.
We show in Section~6 how to obtain periodic spiders for entire functions, and we use this to conclude with the proof of the two main theorems.
In the Appendix we sketch the construction of an transcendental entire function with monodromy group $C_2 * C_2 * C_2$. 

\emph{Convention.} We denote the Riemann sphere by $\hat \C$. We parametrize paths by closed intervals $I \subset [0,
\infty]$. We compose paths
in the same fashion as functions, if $p \colon I \rightarrow \hat \C$ is a path from $a$ to $b$, and $q\colon I \rightarrow \hat \C$ is
a path from $b$ to $c$, then $qp$ is the concatenation of $p$ and $q$ and a
path from $a$ to $c$. An \emph{arc} is an injective path.

For a subset $B \subset \hat \C$, a path $p \colon I \rightarrow \hat \C$ is \emph{proper} relative to $B$ if $p^{-1}(B)$ consists precisely of the endpoints of the interval $I$. A \emph{proper homotopy} relative to $B$ is a homotopy $H \colon I \times [0,1] \rightarrow \hat \C$ such that each path $H_t = H({-},t)$ is a proper path and the homotopy is constant on endpoints.

This paper is based on the fourth chapter of the author's PhD thesis \cite{Reinkethesis}.

\emph{Acknowledgements.} Part of this research was
done during visits at Texas A\&M University and at UCLA\@.
We would like to thank our hosts, Volodymyr Nekrashevych and Mario Bonk, as well as the HOLOGRAM team, in particular Kostiantyn Drach, Dzmitry Dudko, Mikhail Hlushchanka, David Pfrang and Dierk Schleicher, for helpful discussions and comments. We would like to thank especially Kostiantyn Drach for his support in creating the figures.

\section{Line graphs and Schreier graphs of entire functions in the Speiser class}
We develop the function theory of entire functions with finitely many singular values here. See \cite{BergweilerEremenkoSingularities} for a more general discussion of singularities of meromorphic functions. 
\begin{definition}
  Let $f \colon \C \rightarrow \C$ be an entire transcendental function. For $z_0 \in \C$, the \emph{local degree} of $f$ at $z_0$ or \emph{branch index} of $f$ at $z_0$
  is minimal positive degree $m \geq 1$ appearing in the local power series expansion of $f$ at $z_0$, i.e., $f(z) = f(z_0) + \alpha(z-z_0)^m + (\text{higher order terms})$. If the local degree is greater than $1$, then $z_0$ is a \emph{critical point}, and $f(z_0)$ is a \emph{critical value}. Note that $z_0$ is a critical point if and only if $f'(z_0) = 0$.

   An \emph{asymptotic value} is a limit $\lim_{t \to \infty}f(\gamma(t))$ where $\gamma \colon [0,\infty) \rightarrow \C$ is a path with $\lim_{t \to  \infty}\gamma(t) = \infty$.
        The set of finite singular values is defined as
        \begin{eqnarray*}
        \Sing(f) = \overline{\left\{\text{critical values} \right\} \cup \left\{\text {asymptotic values}\right\}}.
        \end{eqnarray*}
        The value $\infty$ is also considered as a singular value, but it is not in the set of finite singular values $\Sing(f)$.
   We say that $f$ belongs to the \emph{Speiser class} $\Speiser$ if $\Sing(f)$ is finite.
  \label{def:sing}
\end{definition}
We are mostly interested in the topological behaviour of entire functions in the Speiser class. The following lemma will be the basis of our discussion.
\begin{lemma}[{\cite[Theorem~1.13]{schleicher2010dynamics}}]
  Let $f$ be an entire function. Then $f$ restricts to an unbranched covering from $\C \setminus f^{-1}(\Sing(f))$ to $\C \setminus \Sing(f)$.
  \label{lem:covprop}
\end{lemma}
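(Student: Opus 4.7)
The plan is to verify the covering property locally, over small disks avoiding $\Sing(f)$. For $w_0 \in \C \setminus \Sing(f)$, fix an open disk $D \ni w_0$ with $\overline D \cap \Sing(f) = \emptyset$. Since $D$ contains no critical values, $f' \ne 0$ on $f^{-1}(D)$, so the inverse function theorem makes $f \colon f^{-1}(D) \to D$ a local biholomorphism. As $D$ is simply connected, it will suffice to prove path lifting for this map: the usual covering-space argument then identifies $f^{-1}(D)$ with a disjoint union of open sheets each mapped homeomorphically onto $D$ by $f$.

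For the path lifting, take $\gamma \colon [0,1] \to D$ and $z_0 \in f^{-1}(\gamma(0))$, and let $\tilde\gamma \colon [0, T) \to \C$ be the maximal continuous lift starting at $z_0$ (nonempty because $f$ is a local biholomorphism at $z_0$). I claim $T = 1$ and that $\tilde\gamma$ extends continuously to $t=1$. Assume otherwise. If $\tilde\gamma(t_n) \to z^* \in \C$ along some $t_n \to T^-$, then by continuity $f(z^*) = \gamma(T) \in D$; $f$ is a local biholomorphism at $z^*$ with some local inverse $g$, so $g \circ \gamma$ is a continuous lift defined on a left neighborhood of $T$ that agrees with $\tilde\gamma$ at $t_n$ for $n$ large. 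By uniqueness of lifts for a local homeomorphism, the two agree throughout this left neighborhood, so $\tilde\gamma$ extends past $T$, contradicting maximality.

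The remaining case is that $\tilde\gamma$ has no subsequential limit in $\C$, equivalently $\tilde\gamma(t) \to \infty$ in $\hat \C$ as $t \to T^-$. Precomposing with an increasing homeomorphism $[0,\infty) \to [0, T)$ yields a continuous path in $\C$ escaping to $\infty$ along which $f$ converges to $\gamma(T)$; by Definition~\ref{def:sing}, this exhibits $\gamma(T)$ as an asymptotic value of $f$, contradicting $\gamma(T) \in \C \setminus \Sing(f)$. Both cases fail, so $\gamma$ lifts all the way to $[0,1]$.

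With path lifting established, pick $w_0 \in D$; for each $z_i \in f^{-1}(w_0)$, define $\phi_i \colon D \to \C$ by sending $w$ to the endpoint of the lift starting at $z_i$ of any path in $D$ from $w_0$ to $w$. Simple connectedness of $D$ and uniqueness of lifts make $\phi_i$ well-defined and continuous; its image $V_i$ is open, mapped homeomorphically onto $D$ by $f$, the $V_i$ are pairwise disjoint (their base points $z_i$ are), and they exhaust $f^{-1}(D)$ by lifting connecting paths backwards. The main obstacle I anticipate in a full write-up is the escape-to-infinity alternative: turning the purely topological condition ``no subsequential limit in $\C$'' into a genuine asymptotic path along which $f$ converges is exactly where the asymptotic-value part of $\Sing(f)$ comes in, and it is what singles out $\Sing(f)$ as the right set to remove.
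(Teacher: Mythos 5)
Your proposal is correct, but note that the paper itself gives no proof of this lemma at all: it is quoted verbatim from the cited survey (\cite[Theorem~1.13]{schleicher2010dynamics}), so there is no internal argument to compare against. What you wrote is essentially the standard proof of that cited result: local injectivity because critical values are excluded, and the maximal-lift dichotomy in which the only failure mode is escape to $\infty$, which manufactures an asymptotic value inside $D$ and contradicts $D \cap \Sing(f) = \emptyset$ (this is indeed exactly where the definition of $\Sing(f)$ as the closure of critical plus asymptotic values is used). Two details deserve a sentence each in a full write-up. First, path-independence of $\phi_i$ does not follow from uniqueness of lifts alone; you need homotopy invariance, i.e.\ the monodromy theorem --- in this holomorphic setting the cleanest route is to view the lifts as analytic continuations of the local inverse germ of $f$ at $w_0$ sending $w_0 \mapsto z_i$, and apply the classical Monodromy Theorem on the simply connected $D$. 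Second, if ``unbranched covering'' is taken to include surjectivity onto $\C \setminus \Sing(f)$, you should rule out empty fibers: the image of a nonconstant entire function is dense (if it missed a disk around $c$, then $1/(f-c)$ would be bounded, hence constant), so $D$ contains values of $f$, and your own path-lifting argument inside $D$ then shows every point of $D$ is attained.
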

We will mostly consider functions from the Speiser class. 
\begin{definitionlemma}
  Let $f$ be an entire function in the Speiser class. For $z \in \Sing(f)$, let $U \subset \C$ be a simply connected open neighborhood of $z$ that intersects $\Sing(f)$ only in $z$. Let $V$ be a connected component of $f^{-1}(U)$. Then $V$ is simply connected and exactly one of the following holds:  \begin{itemize}
    \item The map $f$ restricts to a biholomorphic map $V \rightarrow U$. The unique preimage $w$ of $z$ in $V$ is called a \emph{regular preimage} of $z$.
    \item The map $f$ has a unique preimage $w$ of $z$ in $V$ and the map $f$ restricts to an unbranched covering map on $V \setminus \left\{ w \right\} \rightarrow U \setminus \left\{ u \right\}$ of degree $m > 1$ equal to the local degree of $f$ at $w$. In this case we say that $w$ is an \emph{algebraic singularity} over $z$.
    \item The map $f$ has no preimage of $z$ in $V$ and restricts to an universal covering $V \rightarrow U \setminus \left\{ z \right\}$. In this case we say that $V$ is a \emph{logarithmic tract} over $z$.
  \end{itemize}
  If $U' \subset U$ is another simply connected open neighborhood of $z$, then every component of $f^{-1}(U)$ contains exactly one component of $f^{-1}(U')$, and the classifications of preimage components agree. In particular, we can compare the classification for any two simply connected open neighborhoods of $z$ by going to a simply connected open neighborhood contained in both of them. A class of compatible logarithmic tracts is identified with a \emph{logarithmic singularity}. 

  Moreover let $U' \subset \hat \C$ be a simply connected open neighborhood of $\infty$. Then every preimage component of $U' \setminus \left\{ \infty  \right\}$ is a logarithmic tract over infinity.
  \label{lem:singclassification}
\end{definitionlemma}
\begin{proof}
  Note that by the previous lemma, $V \setminus f^{-1}(z) \rightarrow U \setminus \{z\}$ is always an unbranched covering. As $U \setminus \{z\}$ has fundamental group $\Z$, the classification in three different cases follows easily from the classification of connected coverings of $U \setminus \{z\}$. If $U' \subset U$ is another simply connected open neighborhood of $z$, then $U' \setminus {z} \hookrightarrow U \setminus {z}$ is a homotopy equivalence, and they share the same classification.

  For discussion of the preimage $f^{-1}(z)$, see for example \cite[Theorem 5.11]{Forster} for algebraic singularities, and \cite{BergweilerEremenkoSingularities} for logarithmic singularities.
\end{proof}
A entire function in the Speiser class is called \emph{structurally finite} if it only has finitely many logarithmic singularities and finitely many algebraic singularities. See \cite{Elfving} for the classification of such maps via their Schwarzian derivative. 

Our classification of singularities is simplified as we only consider functions in the Speiser class. In particular, we use that $\Sing(f)$ is discrete, and every point in $\Sing(f)$ has a simply connected open neighborhood away from the other points in $\Sing(f)$. See \cite{BergweilerEremenkoSingularities} for a more general discussion.
\begin{figure}[ht]
  \centering
  \input{SpineTikzSimple.tex}
  \caption{Base graph $\Spine$ for $f(z) = (1 - z) \exp (z)$}
  \label{fig:spinesimple}
\end{figure}
\begin{figure}[ht]
  \centering
  \definecolor{xfqqff}{rgb}{0.4980392156862745,0.,1.}
\definecolor{ffxfqq}{rgb}{1.,0.4980392156862745,0.}
\definecolor{qqzzff}{rgb}{0.,0.6,1.}
\definecolor{ffqqqq}{rgb}{1.,0.,0.}
\definecolor{qqffqq}{rgb}{0.,1.,0.}
\begin{tikzpicture}[line cap=round,line join=round,>=stealth,x=1.3cm,y=1.3cm]
\clip(-1.,-4.5) rectangle (1.8,4.6);
\draw [line width=1.2pt,color=qqffqq] (0.,-4.)-- (0.,-3.);
\draw [line width=1.2pt,color=ffqqqq] (0.,-3.)-- (0.,-2.);
\draw [line width=1.2pt,color=qqffqq] (0.,-2.)-- (0.,-1.);
\draw [line width=1.2pt,color=ffqqqq] (0.,-1.)-- (0.,0.);
\draw [line width=1.2pt,color=qqffqq] (0.,0.)-- (0.,1.);
\draw [line width=1.2pt,color=ffqqqq] (0.,1.)-- (0.,2.);
\draw [line width=1.2pt,color=qqffqq] (0.,2.)-- (0.,3.);
\draw [line width=1.2pt,color=ffqqqq] (0.,3.)-- (0.,4.);
\draw [line width=1.2pt,color=qqffqq] (1.,0.)-- (1.,1.);
\draw [line width=1.2pt,color=qqzzff] (1.,1.)-- (0.,1.);
\draw [line width=1.2pt,color=qqzzff] (1.,0.)-- (0.,0.);
\draw [shift={(1.,0.5)},line width=1.2pt,color=ffqqqq]  plot[domain=-1.5707963267948966:1.5707963267948966,variable=\t]({1.*0.5*cos(\t r)+0.*0.5*sin(\t r)},{0.*0.5*cos(\t r)+1.*0.5*sin(\t r)});
\draw [shift={(0.,2.5)},line width=1.2pt,color=qqzzff]  plot[domain=-1.5707963267948966:1.5707963267948966,variable=\t]({1.*0.5*cos(\t r)+0.*0.5*sin(\t r)},{0.*0.5*cos(\t r)+1.*0.5*sin(\t r)});
\draw [shift={(0.,-1.5)},line width=1.2pt,color=qqzzff]  plot[domain=-1.5707963267948966:1.5707963267948966,variable=\t]({1.*0.5*cos(\t r)+0.*0.5*sin(\t r)},{0.*0.5*cos(\t r)+1.*0.5*sin(\t r)});
\draw [shift={(0.,-3.5)},line width=1.2pt,color=qqzzff]  plot[domain=-1.5707963267948966:1.5707963267948966,variable=\t]({1.*0.5*cos(\t r)+0.*0.5*sin(\t r)},{0.*0.5*cos(\t r)+1.*0.5*sin(\t r)});
\draw (-0,4.35) node{$\vdots$};
\draw (-0,-4.25) node {$\vdots$};

\draw [color=black, line width=1.2pt] (0.,0.) circle (2.5pt);
\draw [color=black, line width=1.2pt] (0.,1.)-- ++(-2.5pt,-2.5pt) -- ++(5.0pt,5.0pt) ++(-5.0pt,0) -- ++(5.0pt,-5.0pt);
\draw [color=black, line width=1.2pt] (0.,2.) circle (2.5pt);
\draw [color=black, line width=1.2pt] (0.,3.)-- ++(-2.5pt,-2.5pt) -- ++(5.0pt,5.0pt) ++(-5.0pt,0) -- ++(5.0pt,-5.0pt);
\draw [color=black, line width=1.2pt] (0.,4.) circle (2.5pt);
\draw [color=black, line width=1.2pt] (0.,-1.)-- ++(-2.5pt,-2.5pt) -- ++(5.0pt,5.0pt) ++(-5.0pt,0) -- ++(5.0pt,-5.0pt);
\draw [color=black, line width=1.2pt] (0.,-2.) circle (2.5pt);
\draw [color=black, line width=1.2pt] (0.,-3.)-- ++(-2.5pt,-2.5pt) -- ++(5.0pt,5.0pt) ++(-5.0pt,0) -- ++(5.0pt,-5.0pt);
\draw [color=black, line width=1.2pt] (0.,-4.) circle (2.5pt);
\draw [color=black, line width=1.2pt] (1.,0.)-- ++(-2.5pt,-2.5pt) -- ++(5.0pt,5.0pt) ++(-5.0pt,0) -- ++(5.0pt,-5.0pt);
\draw [color=black, line width=1.2pt] (1.,1.) circle (2.5pt);
\draw [color=black, line width=1.2pt] (0.,-2.) circle (2.5pt);
\draw [color=black, line width=1.2pt] (0.,2.) circle (2.5pt);
\end{tikzpicture}
  \caption{Line complex $\LineComplex$ for $f(z) = (1 - z) \exp (z)$}
  \label{fig:linecomplexsimple}
\end{figure}
\begin{example}
  We will use the function $f(z) = (1 - z) \exp z$ as our running example. As $f'(z) = -z \exp z$, the only critical point of $f$ is $0$ of local degree $2$,
  so $f(0) = 1$ is the only critical value.
  The path along the negative real axis shows that $0$ is an asymptotic value. By the Denjoy--Carleman--Ahlfors theorem (see e.g. \cite[Theorem~1.17]{schleicher2010dynamics}), there is exactly one logarithmic singularity over $0$ and one logarithmic singularity over $\infty$. In particular, the function has as finite singular values only $0$ and $1$ and is structurally finite.
\end{example}
We will use the notion of line complex (or Speiser graph) for entire functions. They can more generally be used for any surface spread with finitely many singular values, but we restrict our attention to entire functions in the Speiser class. See \cite[Chapter 7]{GoldbergOstrovskii} for a general introduction of line complexes for meromorphic functions.
\begin{definition}
  Let $f$ be entire transcendental function in the Speiser class, with $n$ finite singular values. Let $L$ be an
  oriented Jordan curve in $\hat \C$ going through all finite singular values
  and $\infty$. Then $\Sing(f) \cup \infty$ separates $\gamma$ into finitely
  many arcs $L_1, \dots L_{n + 1}$, where we assume that the $L_i$ are cyclically
  ordered.  We label the set $\Sing(f) \cup \infty$ with $a_1, \dots, a_{n+1}$ with
  $a_{n+1} = \infty$ such that $L_i$ is the arc from $a_i$ to $a_{i+1}$, with
  cyclical indices. The line complex or Speiser graph is defined as follows:
  $L$ separates the plane in two componets $H_1$ and $H_2$. Choose points $p_1
  \in H_1$ and $p_2 \in H_2$.  We think of $L$ as a planar graph with $n$ edges
  and $n$ vertices and construct the dual graph $\Spine$ of $L$ by connecting
  $p_1$ and $p_2$ via arcs $\alpha_1,\dots,\alpha_{n+1}$ with $\alpha_i$ intersecting
  $L$ only in one point of $L_i$, and the $\alpha_i$ intersecting each other 
  only in $p_1$ and $p_2$. The \emph{line complex} $\LineComplex$ of $f$ with
  respect to $L$ is the preimage of $\Spine$ under $f$ as a planar graph in $\C$.
  \label{def:linegraph}
\end{definition}
\begin{example}
  In our example $f = (1-z) \exp z$, a possible choice for the Jordan curve $L$ is given by the extended real line.
  See Figure~\ref{fig:spinesimple} for the graph $\Spine$ and
  see Figure~\ref{fig:linecomplexsimple} for the resulting line complex.
\end{example}

\begin{definition}[Spider, Rose graph]
  A \emph{spider leg} is an injective curve $\gamma \colon [0, \infty) \rightarrow \C$ with $\lim_{s \to \infty}\gamma(s) = \infty$. 
  It will also be convenient to think of a spider leg as a closed arc from $[0, \infty]$ to $\hat \C$ with $\gamma(\infty) = \infty$.
  The end point $\gamma(0)$ is also called the \emph{landing point} of $\gamma$.

  Let $A$ be a finite set of points in $\C$.
  A \emph{spider} is a family $\Spider = (\gamma_a)_{a \in A}$ such
  that $\gamma_a$ is a spider leg landing at $a$ such that the $\gamma_a$ are disjoint in $\C$.
  We can think of $\Spider$ as a planar tree in $\hat\C$.

  Let $t$ be a point in $\C$ that is not in the image of any spider leg of
  $\Spider$. Taking a dual graph of $\Spider$ we obtain a \emph{rose graph}
  $\Rose$. Its only vertex is $t$, and for every $a \in A$, we have a loop
  $g_a$ that intersects $\Spider$ only once, namely in the interior $\gamma_a$. If we think
  of $\gamma_a$ as an arc from $a$ to $\infty$, we choose the orientation on
  $g_a$ such that the algebraic intersection number $i(g_a, \gamma_a)$ is
  positive.

   \label{def:spider}
\end{definition}
Given a set $A \subset \C$ and two spider legs $\gamma, \gamma'$ landing at the same point in $a\in A$, we say that they are homotopic relative to $A$ if there
are properly homotopic relative to $A \cup \left\{ \infty \right\} \subset \hat \C$ as proper paths in $\hat \C$. This means that there
is a homotopy $H \colon [0, \infty] \times [0, 1] \rightarrow \hat \C$ from $\gamma$ to $\gamma'$ such that $H^{-1}(a) = 0 \times [0,1]$, $H^{-1}(\infty) = \infty \times [0,1], H^{-1}(A \setminus \left\{ a \right\}) = \emptyset$. So every $H_t = H({-},t)$ is a path from the common landing point of $\gamma, \gamma'$ to $\infty$ intersecting $A \cup \left\{ \infty \right\}$ at the appropriate endpoints.

We are interested in working with spiders up to homotopy. We will use the following lemma to safely pass from considerations up to homotopy to considerations up to isotopy.
\begin{lemma}[Epstein-Zieschang]
  Let $B$ be a finite set of point in $\hat \C$. Consider $(\hat \C, B)$ as a compact marked surface. Let $\gamma_1,\dots,\gamma_n$ be a collection of arcs with endpoints in $B$ such that the following holds:
  \begin{itemize}
    \item The arcs intersect $B$ only at the endpoints. 
    \item The arcs and their inverses are pairwise nonhomotopic as proper paths relative to $B$. 
    \item The arcs intersect each other at most at their endpoints.
  \end{itemize}
  Let $\gamma'_1,\dots,\gamma'_n$ be another collection of arcs satisfying the same conditions, such that $\gamma_i$ is homotopic to $\gamma'_i$ relative to $B$. 
  Then there is a homeomorphism $\phi$ isotopic relative to $B$ to the identity with $\phi(\gamma_i) = \gamma'_i$. 
  \label{lem:epsteinzieschang}
\end{lemma}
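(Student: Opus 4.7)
I would proceed by induction on $n$, with the trivial case $n = 0$ as the base and the case $n = 1$ as the key classical input. The case $n = 1$ is the theorem of Epstein (and Baer): two properly embedded simple arcs in a surface with common endpoints that are homotopic rel endpoints are in fact related by an ambient isotopy rel endpoints. On the sphere this can be proved directly by an innermost-bigon reduction: after putting $\gamma_1$ and $\gamma'_1$ in transverse general position, any innermost disk bounded by a subarc of $\gamma_1$ and a subarc of $\gamma'_1$ can be used to isotope $\gamma_1$ across it, removing two intersection points; iterating yields $\gamma_1$ and $\gamma'_1$ disjoint except at endpoints. In that configuration $\gamma_1 \cup \gamma'_1$ bounds an embedded disk in $\hat \C \setminus (B \setminus \{\text{endpoints}\})$, through which an Alexander-type radial squeeze produces the ambient isotopy rel $B$.

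For the inductive step, apply the hypothesis to $(\gamma_i)_{i < n}$ and $(\gamma'_i)_{i < n}$ to get a homeomorphism $\phi_0$ isotopic to the identity rel $B$ with $\phi_0(\gamma_i) = \gamma'_i$ for $i < n$, and set $\tilde\gamma_n := \phi_0(\gamma_n)$. Then $\tilde\gamma_n$ is a simple arc homotopic to $\gamma'_n$ rel $B$, and since $\phi_0$ preserves intersection patterns, $\tilde\gamma_n$ meets $\gamma'_1 \cup \cdots \cup \gamma'_{n-1}$ only at the common endpoints. Let $\Sigma$ be the compact surface with boundary (and boundary-marked points) obtained by cutting $\hat \C$ along $\gamma'_1 \cup \cdots \cup \gamma'_{n-1}$; both $\tilde\gamma_n$ and $\gamma'_n$ descend to simple properly embedded arcs in $\Sigma$ with matching endpoints. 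Granting that they remain homotopic rel endpoints in $\Sigma$, the base case applied in the component of $\Sigma$ containing them yields an ambient isotopy of $\Sigma$ rel boundary taking $\tilde\gamma_n$ to $\gamma'_n$. Re-gluing gives an isotopy of $\hat \C$ rel $B \cup \gamma'_1 \cup \cdots \cup \gamma'_{n-1}$, and composing with $\phi_0$ produces the desired $\phi$.

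The main obstacle is precisely the step of upgrading a homotopy $H \colon [0,1]^2 \to \hat \C$ between $\tilde\gamma_n$ and $\gamma'_n$ in $\hat \C$ to one avoiding the interiors of $\gamma'_1, \dots, \gamma'_{n-1}$. I would clear each $\gamma'_i$ in turn by an innermost-disk argument: after a perturbation, $H$ is transverse to $\gamma'_i$ away from the endpoints, so $H^{-1}(\gamma'_i) \subset [0,1]^2$ is a $1$-manifold with boundary contained in $\partial[0,1]^2$. An innermost component $D$ of its complement bounded by this $1$-manifold and pieces of $\partial[0,1]^2$ must, by a short case analysis on which sides of the square its boundary touches, either witness a homotopy between a subarc of $\gamma'_i$ and a subarc of $\tilde\gamma_n$, $\gamma'_n$ or another $\gamma'_j$, or else provide a local bigon move strictly reducing the intersection pattern. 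The first two outcomes are excluded by simplicity of $\tilde\gamma_n,\gamma'_n$ together with the hypothesis that the arcs (and their inverses) are pairwise nonhomotopic rel $B$, using also that $\tilde\gamma_n \simeq \gamma'_n$ is not homotopic to any $\gamma'_i$. Thus only the local bigon moves occur, and finitely many iterations produce a homotopy disjoint from $\gamma'_1 \cup \cdots \cup \gamma'_{n-1}$, i.e., a homotopy in $\Sigma$, completing the induction.
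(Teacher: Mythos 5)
The paper does not actually prove this lemma: its ``proof'' is the single line citing \cite[Theorem~A.5]{Buser2010}, i.e.\ the Baer--Epstein--Zieschang theorem. So your proposal is by necessity a different route --- a self-contained reconstruction rather than an appeal to the literature --- but the route you choose (induction on the number of arcs, base case the Epstein/Baer theorem for a single arc, inductive step by cutting along the already matched arcs and upgrading the homotopy to the cut surface) is essentially the strategy behind the cited result, and you correctly identify the two crucial points: the single-arc case, and the fact that the pairwise-nonhomotopy hypothesis (including inverses) is exactly what lets one push the homotopy of $\tilde\gamma_n$ off the arcs $\gamma'_1,\dots,\gamma'_{n-1}$. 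What the paper's choice buys is brevity and safety; what yours would buy is self-containedness, at the price of a considerable amount of careful surface topology.

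Two places in your sketch are thinner than they look. First, in the base case the phrase ``any innermost disk bounded by a subarc of $\gamma_1$ and a subarc of $\gamma'_1$ can be used to isotope $\gamma_1$ across it'' is not right as stated: an innermost bigon may contain points of $B$, and pushing across such a disk is not an isotopy relative to $B$ (and changes the homotopy class rel $B$, destroying the invariant you need at the end). What you need is the bigon criterion in the punctured sphere --- homotopic rel $B$ with excess intersection forces a bigon whose \emph{interior is disjoint from} $B$ --- and proving that is the real content of Epstein's theorem (via geodesic representatives or universal-cover arguments); if you simply quote Epstein/Baer for $n=1$, as you half do, this is fine. Second, in the inductive step the analysis of $H^{-1}(\gamma'_i)$ must also dispose of closed (circle) components and of components meeting the corners of the square, and it must in particular establish that $\tilde\gamma_n$ and $\gamma'_n$ leave a common endpoint into the same complementary sector of the cut system --- this is part of what ``homotopic in the cut surface'' means and again uses the nonhomotopy hypotheses. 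None of these is a fatal flaw --- they are the standard, if fiddly, ingredients of the classical proof --- but as written your argument would not yet compile into a complete proof, which is presumably why the paper delegates it to \cite{Buser2010}.
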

For a proof see \cite[Theorem~A.5]{Buser2010}.
\begin{lemma}[Lifting properties of entire functions]
  Let $f$ be an entire function in the Speiser class. Let $A \subset \C$ be a finite set that contains $\Sing(f)$.
  We have the following lifting properties:
  \begin{itemize}
    \item For every path $p \colon I \rightarrow \C \setminus A$, and every preimage $w \in f^{-1}(p(0))$, there is a unique path $p^w \colon I \rightarrow \C \setminus f^{-1}(A)$, such that $p^w(0) = w$ and $f\circ p^w = p$.
    \item For every spider leg $\gamma$ that is completely disjoint from $A$, and every preimage $w$ of the landing point of $\gamma$, there is a unique lift $\gamma^w$ of $\gamma$ landing at $w$.
    \item For every spider leg $\gamma$ that intersects $A$ only at the landing point of $\gamma$, and every preimage $w$ of the landing point of $\gamma$, there as many lifts of $\gamma$ landing at $w$ as the local degree of $f$ at $w$.
    \item Let $\gamma, \gamma'$ be spider legs that land at the same point of $a \in A$ and are homotopic relative to $A$ via a homotopic $H \colon \left[ 0,\infty \right] \times \left[ 0,1 \right] \rightarrow \hat \C$. Let $w$ be a preimage of $a$, and $\hat\gamma$ a lift of $\gamma$ landing at $w$. Then there is a homotopy $\hat H \colon \left[ 0, \infty \right] \times \left[ 0,1 \right] \rightarrow \hat \C$ relative to $f^{-1}(A)$ of spider legs landing at $w$ from $\hat\gamma$ to a lift of $\gamma'$. 
  \end{itemize}
  \label{lem:lifting}
\end{lemma}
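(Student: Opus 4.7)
The plan is to recognize each item as a consequence of covering-space theory applied to the unbranched covering
\[
 f \colon \C \setminus f^{-1}(A) \longrightarrow \C \setminus A,
\]
which we obtain from Lemma~\ref{lem:covprop} since $A \supset \Sing(f)$, combined with the local classification of preimages at the remaining boundary strata given by Definition and Lemma~\ref{lem:singclassification}.

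For the first item, $p$ takes values in $\C \setminus A$ by hypothesis, so standard path lifting for the covering above produces the unique lift $p^w$. For the second item, the image of $\gamma$ lies in $\C \setminus A$, so on every compact subinterval $[0,T] \subset [0,\infty)$ we again get a unique lift $\gamma^w$ by path lifting. To promote this to a closed arc in $\hat\C$ landing at $\infty$, I fix a decreasing basis of simply connected neighborhoods $U_n$ of $\infty$ in $\hat\C$; by Lemma~\ref{lem:singclassification}, each connected component of $f^{-1}(U_n \setminus \{\infty\})$ is a logarithmic tract over $\infty$, and these tracts are nested. Since $\gamma(s) \to \infty$, for large $s$ the lift $\gamma^w$ must lie in a single such nested sequence of tracts, forcing $\gamma^w(s) \to \infty$ in $\C$.

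For the third item, let $a = \gamma(0)$ and pick a simply connected neighborhood $U$ of $a$ meeting $A$ only at $a$. Since $w \in \C$, $w$ is either a regular preimage of $a$ or an algebraic singularity over $a$ (the logarithmic case of Lemma~\ref{lem:singclassification} has no preimage in $\C$). On the component $V$ of $f^{-1}(U)$ containing $w$, the map $f$ is either a biholomorphism $V \to U$ or a cyclic branched cover of degree $m$ equal to the local degree of $f$ at $w$. Counting lifts of the initial portion of $\gamma$ in $U$ that land at $w$ via the local model $z \mapsto (z-w)^m$ therefore gives exactly $\deg_w f$ choices; outside $U$, each such germ extends uniquely by the first item. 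For the fourth item, the open portion $H|_{(0,\infty) \times [0,1]}$ has image in $\C \setminus A$, so homotopy lifting for covering spaces gives a unique lift $\hat H$ on $(0,\infty) \times [0,1]$ extending the initial lift $\hat\gamma|_{(0,\infty)}$.

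The main technical obstacle is to extend $\hat H$ continuously across the two boundary strata $\{0\}\times[0,1]$ and $\{\infty\}\times[0,1]$, and to identify the restrictions there. For $\{0\} \times [0,1]$, fix $U$ and $V$ as in item three; for $s$ small the values $H(s,t)$ lie in $U$, and by connectedness in $t$ the lifts $\hat H(s,t)$ land in a single preimage component, necessarily $V$ (the one initially used by $\hat\gamma$). Using the local model on $V$, any sequence $\hat H(s_n, t_n)$ with $s_n \to 0$ has all accumulation points in $\overline V \cap f^{-1}(a) = \{w\}$, so $\hat H$ extends continuously by $\hat H(0,t) = w$. The same argument at $\{\infty\} \times [0,1]$, now using the logarithmic tract containing the tail of $\hat\gamma$ and the nested sequence of tracts over neighborhoods of $\infty$ as in item two, yields continuous extension by $\hat H(\infty,t) = \infty$. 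Thus $\hat H$ is a homotopy of spider legs landing at $w$ from $\hat\gamma$ to some lift of $\gamma'$, which is what was claimed.
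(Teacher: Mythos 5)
Your proof is correct and takes essentially the same route as the paper's: reduce everything to the unbranched covering $\C \setminus f^{-1}(A) \rightarrow \C \setminus A$ from Lemma~\ref{lem:covprop}, use the local classification of Lemma~\ref{lem:singclassification} near the landing point to count and extend germs of lifts, and then extend across the two boundary strata. The only real differences are cosmetic: the paper handles the landing at $\infty$ with the one-line observation that $f$ is bounded on compact sets (which is also the fact implicitly needed to make your nested-tract argument ``force'' $\gamma^w(s)\to\infty$), and it leaves the homotopy case as ``similar,'' which you spell out in more detail.
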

\begin{proof}
  The first statement is just the unique path lifting property of the unbranched covering $f \colon \C \setminus f^{-1}(A) \rightarrow \C \setminus A$.

  For the second part, the only thing left to show is that $\lim_{t \to \infty}\gamma^w(t) = \infty$. Since $\lim_{t \to \infty} \gamma(t) = \infty$, and $f$ is bounded on every compact subset of $\C$, it follows that $\gamma^w$ has to leave every compact subset of $\C$.

  For the last two statement, let $v$ be the landing point of $\gamma$, $U$ be a simply connected open neighborhood of $v$ that intersects $\Sing(f)$ only in $v$.
  Let $V$ be the preimage component of $U$ that contains $w$. Choose a point on $v' \in \gamma \cap U \setminus \left\{ v \right\}$.
  By Lemma~\ref{lem:singclassification} $v'$ has as many preimages in $V$ as the local degree $f$ at $w$. 
  By the unique path lifting property for every subinterval of $(0,\infty)$, there is a unique lift on $\gamma' \colon (0, \infty) \rightarrow \C$ of $\gamma_{|\left( 0, \infty \right)}$ passing through $v'$. As before we have $\lim_{t \to \infty} \gamma'(t) = \infty$. From the local behaviour of $f$ at $w$, we also get $\lim_{t \to 0} \gamma'(0) = 0$. So we can extend $\gamma'$ to a lift of $\gamma$. A similar proof works for homotopies.
\end{proof}
\begin{definition}[Monodromy action]
  Let $f$ be an entire function in the Speiser class. Let $A$ be a finite subset
  of $\C$ that contains the singular set of $f$. Let $t$ be a point in $\C
  \setminus A$. The monodromy action of $\pi_1(\C \setminus A,t)$ on
  $f^{-1}(t)$ is defined as follows: for $[g] \in \pi_1(\C \setminus A,t)$
  and $w \in f^{-1}(t)$, the action of $[g]$ on $w$ is the endpoint of the
  lift $g^w$. By the homotopy lifting property, this is a well defined
  action. The monodromy group of $f$ is the resulting permutation group on $f^{-1}(t)$.
  \label{def:monodromy}
\end{definition}
One should note that the action of $g$ on $f^{-1}(t)$ only depends on the homotopy class in $\pi_{1}(\C \setminus \Sing(f),t)$. So the monodromy group of $f$ doesn't depend on the set $A$. By standard considerations,  it only depends on $t$ up inner automorphisms.
\begin{lemma}[Schreier graph]
  Let $f$ be an entire function in the Speiser class. Let $A$ be a finite set
  in $\C$ that contains the singular set of $f$.  Let $\Spider = (\gamma_a)_{a \in A}$ be a spider and $(g_a)_{a
  \in A}$ the dual generating set with rose graph $\Rose$ with base point $t$. Then the preimage $\Schreier$ of
  $\Rose$ under $f$ is a locally finite planar graph with vertex set $f^{-1}(t)$ and a topological realization of the \emph{Schreier} graph of the
  monodromy action of $\pi_1(\C \setminus A,t)$ on $f^{-1}(t)$ with generators
  $(g_a)_{a \in A}$. 
  
  Moreover, for every $a \in A$, the finite orbits of $g_a$ of the monodromy action are in bijection to finite preimages of $a$ under $f$, the infinite orbits are $g_a$ are in bijection to logarithmic singularities over $a$.

  In fact, we have the following classification of faces of $\Schreier$:
  \begin{itemize}
    \item Faces with finitely many edges on the boundary contain a unique point $w$ of $f^{-1}(A)$ and are bounded by a loop $x_1 \xrightarrow{g^{x_1}_a} x_2 \xrightarrow{g^{x_2}_{a}} \cdots \xrightarrow{g^{x_{k}}_a} x_k = x_1$  given by lifts of $g_a$ along finite $g_a$ orbit for $a = f(w)$.
    \item Faces with infinitely many edges either contain a logarithmic tract over some $a \in A$, and are bounded by an infinite $g_a$ orbit, or they contain a logarithmic tract over $\infty$.
  \end{itemize}
Moreover, faces with infinitely many edges have their boundary as deformation retract.

  \label{lem:schreier}
\end{lemma}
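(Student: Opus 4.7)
The plan is to combine the covering property of $f$ away from $f^{-1}(A)$ (Lemma~\ref{lem:covprop}), the lifting lemma (Lemma~\ref{lem:lifting}), and the singularity classification (Lemma~\ref{lem:singclassification}), reducing each assertion to a statement about connected coverings. The structural claim is nearly immediate: since $\Rose \subset \C \setminus A$, Lemma~\ref{lem:lifting} produces for every $w \in f^{-1}(t)$ and every generator $g_a$ a unique lift of $g_a$ starting at $w$, whose endpoint is by definition $g_a \cdot w$ under the monodromy action. This exhibits $\Schreier = f^{-1}(\Rose)$ as a topological realization of the Schreier graph with generators $(g_a)_{a \in A}$; it is locally finite with exactly $2|A|$ edges at each vertex, and it is planar because $f$ is a local homeomorphism on $\C \setminus f^{-1}(\Sing(f)) \supset \Schreier$.

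For the orbit correspondence, I would fix $a \in A$ and choose a simply connected open neighborhood $U$ of $a$ meeting $A$ only at $a$, small enough that $g_a$ is freely homotopic into $U \setminus \{a\}$ and represents a generator of $\pi_1(U \setminus \{a\}, t)$. Lemma~\ref{lem:singclassification} then decomposes $f^{-1}(U \setminus \{a\})$ into its connected components $V_j^*$, each mapping onto $U \setminus \{a\}$ either as the cyclic cover of degree $m$ (the local degree at the unique preimage of $a$ in the enclosing component $V_j$) in the regular or algebraic case, or as the universal cover in the logarithmic case. Since monodromy acts transitively on the fibre of any connected covering, $f^{-1}(t) \cap V_j^*$ constitutes a single $g_a$-orbit, of size $m$ in the first case and countably infinite in the second. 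Summing over $j$ yields the asserted bijection.

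The face classification reduces to analysing preimages of the $|A|+1$ faces of $\Rose \subset \hat\C$: the $|A|$ simply connected \emph{petals} (each a topological disk containing a single $a \in A$) and the single outer face containing $\infty$. Each petal around $a$ pulls back within each $V_j$ from the previous step to a single face of $\Schreier$; a direct lifting argument shows that its boundary is exactly the cycle or infinite chain of lifts of $g_a$ traced out by the orbit $f^{-1}(t) \cap V_j^*$, and that it contains the unique preimage of $a$ in $V_j$ in the finite case and no point of $f^{-1}(A)$ in the logarithmic case. The outer face pulls back to logarithmic tracts over $\infty$ by the last clause of Lemma~\ref{lem:singclassification}. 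The main obstacle is the deformation-retract statement: I would uniformize each logarithmic tract via the exponential model $\exp \colon \{\operatorname{Re} w < c\} \to \{0 < |z| < e^c\}$ afforded by Lemma~\ref{lem:singclassification}, so that the face becomes a closed half-plane whose topological boundary is exactly the infinite edge-chain, and then invoke the standard straight-line retraction of a half-plane onto its bounding line to produce the required deformation retract; the case of tracts over $\infty$ is treated identically using a neighborhood of $\infty$ in $\hat\C$ in place of $U$.
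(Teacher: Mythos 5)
Your overall route coincides with the paper's: identify the faces of $\Schreier$ with components of the preimages of the faces of $\Rose$ (the petals around each $a\in A$ and the outer face), and apply Lemma~\ref{lem:singclassification} to classify them; the structural statement, the orbit correspondence and the face classification then come out exactly as in the paper's proof. One bookkeeping slip: the phrase ``$f^{-1}(t)\cap V_j^*$ constitutes a single $g_a$-orbit'' does not literally make sense, since $t$ is the base point of the rose and lies outside $U$ (on the boundary of the petal), so the fibre $f^{-1}(t)$ does not meet the open components $V_j^*$. This is easily repaired: either transport the $g_a$-action along the free homotopy from $g_a$ to a core circle in $U\setminus\{a\}$ and lift that homotopy, or simply read the orbits off as the boundary cycles and chains of the preimage faces, which is effectively what the paper does.

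Where you genuinely diverge from the paper is the deformation-retract step, and that is also where your argument is incomplete as stated. Lemma~\ref{lem:singclassification} furnishes the exponential model only on the \emph{open} tract; to run your half-plane retraction you must know that the uniformizing map extends to a homeomorphism from the closure $\overline V$ (taken in $\C$) onto the closed half-plane, carrying $\partial V$ onto the bounding line, and that $\partial V$ is exactly the infinite edge chain (in particular contains no point of $f^{-1}(A)$). That requires an extra boundary argument: Carath\'eodory extension for the Jordan petal, the fact that $f$ is a local homeomorphism along $\partial V$ (so the extended map is a local homeomorphism of a surface with boundary, which one then checks is injective and maps $\partial V$ onto the whole line), and the observation that no preimage of $a$ lies on $\partial V$ because any such point has a punctured neighborhood contained in a \emph{different} component of $f^{-1}(U_a\setminus\{a\})$. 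The paper avoids all of this: it notes $f(\overline V)\subset\overline{U_a}\setminus\{a\}\subset\C\setminus\Sing(f)$, takes the evident deformation retraction of $\overline{U_a}\setminus\{a\}$ onto $\partial U_a$ (respectively of $\overline{U'_\infty}\setminus\{\infty\}$ onto $\partial U'_\infty$), and lifts it through the covering of Lemma~\ref{lem:covprop}, with no boundary uniformization needed. Your approach can be completed, but you should either supply the boundary-extension step or switch to the homotopy-lifting argument.
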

Recall that for a group $G$ with finite generating set $S$ acting on a set $X$, the Schreier graph $\Schreier(G,S,X)$  has vertex set $X$ and edges $x \rightarrow s(x)$ for every $x \in X, s \in S$.
\begin{proof}
  The fact that the preimage $\Schreier$ of $\Rose$ is really the topological realization of the Schreier graph of the
  monodromy action of $\pi_1(\C \setminus A,t)$ on $f^{-1}(t)$ with generators
  $(g_a)_{a \in A}$ is clear from the definition of the monodromy action. 

  Every open face of $\Schreier$ is a component of the preimage of $\C \setminus \Rose$, so it is a component of the preimage of one of the components of $\C \setminus \Rose$. Now every bounded component of $\C \setminus \Rose$ is a simply connected open neighborhood of a single point of $A$ bounded by $g_a$, and the unique
  unbounded component is of the form $U \setminus \left\{ \infty \right\}$ with $U' \subset \hat \C$ a simply connected open neighborhood of $\infty$.
  So now the classification follows from Lemma~\ref{lem:singclassification}. 

  What is left to show that every face with infinitely many edges has their boundary as a deformation retract. 
  For every $a \in A$, let $U_a \subset \C$ be the component of $\C \setminus \Rose$ that contains $a$. Denote by $U'_\infty \subset \hat \C$ the component of $\hat \C \setminus \Rose$ containing $\infty$. Let $V$ be an open face of $\Schreier$ with infinitely many edges on the boundary.
  By the classification, we either have $f(V) = U_a \setminus \left\{ a \right\}$ for some $a \in A$ or or $f(V) = U'_{\infty} \setminus \left\{ \infty \right\}$. 

  As $\partial V \subset \Schreier = f^{-1}(\Rose)$, we have $f(\overline V)
  \subset \overline{U_a} \setminus \left\{ a \right\}$ or $f(\overline V)
  \subset \overline{U'_{\infty}} \setminus \left\{ \infty \right\}$
  In both cases $f(\overline V) \subset \C \setminus A \subset \C
  \setminus \Sing(f)$ and since $\overline{U_a} \setminus \left\{ a \right\}$
  deformation retracts onto $\partial U_a$ and $\overline{U'_{\infty}}
  \setminus \left\{ \infty \right\}$ deformation retracts on $\partial
  U'_\infty$, we can use the homotopy lifting principle to obtain a deformation
  retraction of $\overline{V}$ onto $\partial V$.
\end{proof}
\begin{figure}
\begin{tikzpicture}[->,>=stealth,shorten >=1pt,auto,node distance=2cm,semithick,scale=0.8]
\definecolor{orangehex}{rgb}{1.,0.4980392156862745,0.}
\definecolor{lilahex}{rgb}{0.4980392156862745,0.,1.}
\tikzstyle{point}=[circle,fill,inner sep=0pt, minimum size=4pt]
\node[point,diamond,label=right:$0$] (0) at (0,0) {};
\node[point,label=right:$t$] (t) at (1,0) {};
\node[point,diamond,label=left:$1$] (1) at (2,0) {};
\node[label=below:$\vdots$] (A) at (-1,-2) {};
\node[point] (B) at (-1,-1) {};
\node[point] (C) at (-1,0) {};
\node[point] (D) at (-1,1) {};
\node[label=above:$\vdots$] (E) at (-1,2) {};
\path[orangehex]
      (A) edge node {} (B)
      (B) edge node {} (C)
      (C) edge node {} (D)
      (D) edge node {} (E);
\tikzstyle{every loop}=[<-]      
\path[lilahex]
(B) edge [loop right] node {} (B)
(D) edge [loop right] node {} (D)
      (C) edge [bend right] node [below] {} (t)
      (t) edge [bend right] node [above] {} (C);

\node[point,diamond,label=right:$0$] (0t) at (8,0) {};
\node[point,label=right:$t$] (tt) at (9,0) {};
\node[point,diamond,label=left:$1$] (1t) at (10,0) {};
\begin{pgfonlayer}{background}
  \draw[orangehex,fill=orangehex!20] (t) .. controls +(300:4cm) and +(60:4cm) .. node[above right] (bs) {}  (t);
  \draw[orangehex,fill=orangehex!20] (tt) .. controls +(120:4cm) and +(240:4cm) .. node[above left] (at) {$g$}  (tt);
  \draw[lilahex,fill=lilahex!20] (tt) .. controls +(300:4cm) and +(60:4cm) .. node[above right] {$h$}  (tt);
  \path[fill=lilahex!20] (C.south east) to [bend right] (t.south west) to (t.north west) to [bend right] (C.north east);
\path[lilahex]
(B) edge [loop right,fill=lilahex!20] node {} (B)
(D) edge [loop right,fill=lilahex!20] node {} (D);
\path[fill=orangehex!20] (-1,-3.5) rectangle +(-4, 6.5);
\end{pgfonlayer}
\path (3,0) edge node {$f$} (6,0);
\draw[arrows=-] (0t) -- node{$\gamma_0$} (6,0);
\draw[arrows=-](1t) -- node [below] {$\gamma_1$} (12,0);
\end{tikzpicture}
    \caption{Schreier Graph for $(1-z)\exp(z)$.}
    \label{fig:Schreier}
  \end{figure}
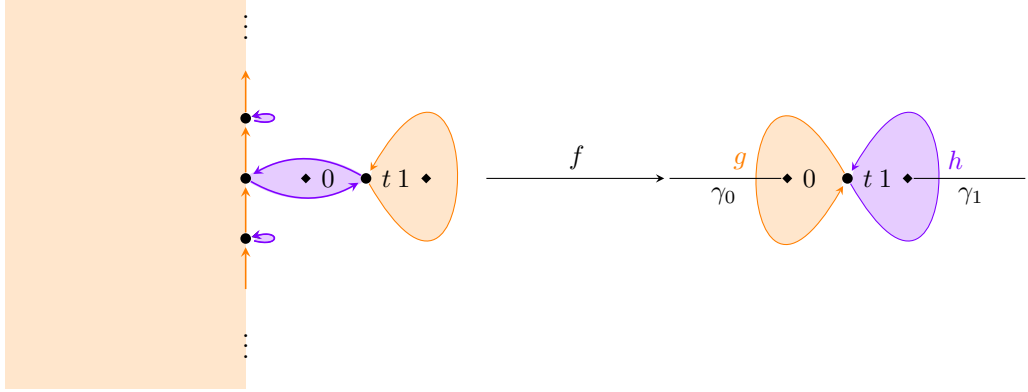
\begin{example}
  In our example $f = (1-z) \exp z$, let us take $A = \Sing(f) = \left\{ 0,1 \right\}$. As $f$ is monotonically decreasing on $[0,1]$, let $t$ be the unique fixed point of $f$ in $[0,1]$. For our spider legs $\gamma_0$ and $\gamma_1$ we take legs that remain in $\R$. For the resulting rose graph and Schreier graph see Figure~\ref{fig:Schreier}. For ease of notation we called the rose graph generator corresponding to $0$ by $g$ and corresponding to $1$ by $h$.
  We also colored faces of $\Schreier$ based on their images under $f$. 
  \label{exa:Schreier}
\end{example}
\begin{remark}[Isotopy dependence]
  The rose graph dual to $\Spider$ is only well-defined up to isotopy relative to $A \cup \left\{ t, \infty \right\}$, the classes of $g_a$ in $\pi_{1}(X \setminus A, t)$ are well-defined. If we change $\Spider$ via an isotopy relative to $A \cup \left\{ t, \infty \right\}$, the classes of $g_a$ don't change.
  If we change $\Spider$ via an isotopy relative to $A \cup \left\{ \infty \right\}$, we get a conjugated generating set of $g_a$.

  \label{rem:Schreier}
\end{remark}
We want to compare random walks on line complexes and Schreier graphs.
\begin{lemma}[Line complexes are quasi-isometric to Schreier graphs]
  Let $f$ be a function in the Speiser class. Let $L$ be a Jordan curve through $\Sing(f) \cup \infty$. Then there is a spider $\Spider = (\gamma_a)_{a \in \Sing}$ such that associated line complex $\LineComplex$ to $L$ and the Schreier graph $\Schreier$ are quasi-isometric in the following sense:

  The vertex set of $\Schreier$ is a subset of the vertex set of $\LineComplex$. Every vertex of $\LineComplex$ is either a vertex of $\Schreier$ or connected in $\LineComplex$ to a vertex of $\Schreier$. For $w,w'$ vertices of $\Schreier$, we have $\frac{2}{n} d_\Schreier(w,w') \leq d_\LineComplex(w,w') \leq 2 d_\Schreier(w,w')$, where $n$ is the cardinality of $\Sing(f)$.
  \label{lem:quasi-isom}
\end{lemma}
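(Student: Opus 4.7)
The plan is to take $t := p_1$ and to choose the spider $\Spider = (\gamma_{a_k})_{k=1}^n$ so that its dual rose generators satisfy $[g_{a_k}] = [\beta_k]$ in $\pi_1(\C \setminus \Sing(f), p_1)$, where $\beta_k := \bar\alpha_k \alpha_{k-1}$ is the length-two loop in $\Spine$ (with indices taken cyclically modulo $n+1$). Since $\alpha_{k-1}$ crosses $L$ only in the interior of $L_{k-1}$ and $\alpha_k$ only in the interior of $L_k$, and $L_{k-1}, L_k$ are precisely the two arcs of $L$ meeting at $a_k$, the simple loop $\beta_k$ bounds a disc in $\hat\C$ containing exactly $a_k$ from $\Sing(f) \cup \{\infty\}$ and so represents a generator of $\pi_1(\C \setminus \Sing(f), p_1)$ conjugate to $g_{a_k}$. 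A spider realising these as its dual generating system exists by standard topology and is essentially unique up to ambient isotopy by Lemma~\ref{lem:epsteinzieschang}.

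With this choice, the vertex set of $\Schreier = f^{-1}(\Rose)$ is $f^{-1}(p_1)$, a subset of the vertex set $f^{-1}(p_1) \sqcup f^{-1}(p_2)$ of $\LineComplex = f^{-1}(\Spine)$; moreover every preimage of $p_2$ is an endpoint of a lift of some $\alpha_i$ whose other endpoint lies in $f^{-1}(p_1)$, so every $\LineComplex$-vertex lies within $\LineComplex$-distance one of a $\Schreier$-vertex. For the upper bound, a single Schreier edge $w \to g_{a_k}(w)$ corresponds via the lifting properties of Lemma~\ref{lem:lifting} to the lift of $\beta_k$ starting at $w$, which is a path of length exactly two in $\LineComplex$; iterating over a Schreier geodesic gives $d_\LineComplex(w, w') \leq 2\, d_\Schreier(w, w')$.

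For the lower bound, take $w, w' \in f^{-1}(p_1)$ and a $\LineComplex$-geodesic between them; its length $2k$ is even since both endpoints lie in $f^{-1}(p_1)$, and it decomposes into $k$ consecutive length-two excursions of the form (down a lift of some $\alpha_i$, up a lift of some $\alpha_j$). Each such excursion projects to a simple loop $\bar\alpha_j \alpha_i$ in $\Spine$, which by the Jordan curve theorem bounds two discs in $\hat\C$; one contains $\infty$ and the other contains some subset $S \subseteq \Sing(f)$ of cardinality at most $n$. The class of this loop in $\pi_1(\C \setminus \Sing(f), p_1)$ is therefore a product of at most $n$ factors $g_a^{\pm 1}$, so by Lemma~\ref{lem:lifting} the endpoint of the excursion is obtained from its starting vertex by acting with a Schreier word of length at most $n$. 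Summing the contributions from the $k$ excursions gives $d_\Schreier(w, w') \leq n k = \tfrac{n}{2}\, d_\LineComplex(w, w')$, which is the required inequality.

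The main technical obstacle is the first step, namely producing a spider whose dual rose graph has the prescribed length-two loops $\beta_k$ as its generators; once this geometric matching between $\Spine$ and $\Rose$ is in place, the quasi-isometry estimates reduce to straightforward bipartite path counting in $\Spine$ (which has exactly two vertices $p_1, p_2$) together with the observation that $\pi_1(\C \setminus \Sing(f), p_1)$ is freely generated by the classes of the $\beta_k$ and that any length-two loop at $p_1$ in $\Spine$ encloses at most $n$ finite singular values on the side not containing $\infty$.
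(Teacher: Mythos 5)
Your proposal is correct and follows essentially the same route as the paper's proof: identify the dual spider generators with the length-two loops $\bar\alpha_k\alpha_{k-1}$ of $\Spine$, obtain the upper bound by replacing each Schreier edge by the corresponding two-edge lift, and the lower bound by cutting a bipartite $\LineComplex$-path into length-two excursions each realizing a word of length at most $n$ in the generators. The only points the paper spells out that you compress are the construction of such a spider (legs taken inside the Jordan domain $H_2$, with the identification of the dual generators verified via the cyclic order of the arcs at $p_2$) and the justification of the excursion step, which the paper derives from the telescoping identity $\alpha_l^{-1}\alpha_k \simeq g_{l+1}g_{l+2}\cdots g_k$ rather than from the enclosed-disc count alone, which by itself only controls the conjugacy class of the excursion loop and would not bound the Schreier displacement.
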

\begin{figure}[ht]
  \centering
  \input{SpineTikz.tex}
  \caption{$\Spine$ and $\Rose$ for $f(z) = (1 - z) \exp (z)$}
  \label{fig:spine}
\end{figure}
\begin{figure}[ht]
  \centering
  \input{LineComplex.tex}
  \caption{$\LineComplex$ and $\Schreier$ for $f(z) = (1 - z) \exp (z)$}
  \label{fig:linecomplex}
\end{figure}
\begin{proof}
  We illustrate the constructions used in the proof in Figure~\ref{fig:spine} and Figure~\ref{fig:linecomplex} for our example function $(1-z) \exp z$.
  We keep the notation of Definition~\ref{def:linegraph}.
  In particular $a_1,\dots, a_{n+1}$ are the singular values of $f$ in cyclic order on $L$, including $a_{n+1} =  \infty$.
We introduce a spider that lies completely in $H_2$. As $H_2$ is a Jordan domain, there is a unique isotopy class of an arc $\gamma_i$ for $1\leq i \leq n$
from $a_i$ to $\infty$ with the interior of $\gamma_i$ in $H_2$. It is possible to realize the isotopy classes via a spider $\Spider = (\gamma_i)_{1\leq i \leq n}$ such that the $\gamma_i$ only meet in $\infty$. For example, use the hyperbolic metric on $H_2$ and let $\gamma_a$ be the geodesic from $a_i$ to $\infty$.

Similarly, for $1 \leq i \leq n +1$ it is possible to connect $a_i$ to $p_2$ via an arc $\beta_i$ in $H_2$ that does intersects $\Spine$ only in $p_2$. Then the cyclic order of arcs leaving $p_2$ is $\beta_1, \alpha_1, \beta_2, \alpha_2,\dots, \beta_{n+1}, \alpha_{n+1}$. We think of the $\alpha_i$ as oriented paths from $p_1$ to $p_2$, so $\alpha^{-1}_i$ is a path from $p_2$ to $p_1$. By convention $\alpha_0 = \alpha_n$. We recall that we concatenate paths in the same way as functions, i.e., ``from right to left''.

Let $g_i$ be the dual generating set to $\Spider$. Then $g_i$ is homotopic to $\alpha^{-1}_{i-1}\alpha_i$: as the
$\gamma_j$ are homotopic to 
$\beta_n\beta_j$, we see that by the cyclic ordering of the arcs
at $p_2$, the concatenation $\alpha^{-1}_{i-1}\alpha_i$ has a positive
transversal intersection in $p_2$ with $\beta_{n+1}\beta_i$, and
removable intersections in $p_2$ with $\beta_{n+1}\beta_j$ for
$j \not= i$. So in fact $g_i$ and $\alpha^{-1}_{i-1}\alpha_i$ are homotopic.

Now let $\Schreier$ be the Schreier graph with respect to the generators $g_i$. Then $\Schreier$ has
as vertex set $f^{-1}(p_1)$, and the line complex $\LineComplex$ has vertex set $f^{-1}(p_1) \cup f^{-1}(p_2)$.
Every element of $f^{-1}(p_2)$ is connected via an edge to an element of $f^{-1}(p_1)$, this proves the first statement.

For the inequality $d_\LineComplex(w,w')\leq 2 d_\Schreier(w,w')$ for $w, w' \in f^{-1}(p_1)$, let
$w,w'$ be connected by a path $q$ in $\Schreier$ of combinatorial length $m$. Then $q$ is a lift $h^w$ for some $h = h_1 \dots h_m$ with
$m = d_\Schreier(w,w')$, $h_j = g_{i_j}$. For $h_j$ let $\tilde h_j$ be the concatenation $\alpha^{-1}_{i_j-1}\alpha_{i_j}$.
Then $\tilde h_j$ is homotopic to $h_j$ in $\C \setminus \Sing(f)$, so $\tilde h = \tilde h_1 \dots \tilde h_m$ is homotopic to $h$ in $\C \setminus \Sing(f)$.
So ${\tilde h}^w$ is also a path from $w$ to $w'$. Since ${\tilde h}^w$ corresponds to a combinatorial path in $\LineComplex$ of length $2m$, we have $d_\LineComplex(w,w') \leq 2m$.

For the inequality $\frac{2}{n} d_\Schreier(w,w') \leq d_\LineComplex(w,w')$ for $w, w' \in f^{-1}(p_1)$, let
$w,w'$ be connected by a path $q$ in $\LineComplex $of combinatorial length $m$. Then $q$ is a lift $h^w$ for some $h = h_1 \dots h_m$ with
$m = d_\LineComplex(w,w')$, $h_j = \alpha^{\epsilon_j}_{i_j}$. As $\LineComplex$ is bipartite, we have $\epsilon_j = (-1)^{j}$ and $m$ even.
Let $m = 2m'$ and $h= h'_1 \dots h'_{m'}$ with $h'_j = \alpha^{-1}_{l_j} \alpha_{k_j}$. By convention, we use $\alpha^{-1}_0$ instead of $\alpha^{-1}_n$.
Then $h'_j$ is homotopic to $\tilde h_j = g_{l_j+1} g_{l_j + 2} \dots g_{k_j}$ if $l_j < k_j$, and $\tilde h_j = (g_{k_j + 1} g_{k_j +2} \dots g_{l_j})^{-1}$ if $l_j > k_j$. In particular, every lift of $h'_j$ is a combinatorial path of length $\leq n$ in $\Schreier$.
So $\tilde h = \tilde h_1 \dots \tilde h_m'$ is homotopic to $h$ in $\C \setminus \Sing(f)$.
So ${\tilde h}^w$ is also a path from $w$ to $w'$. Since ${\tilde h}^w$ corresponds to a combinatorial path in $\LineComplex$ of length $\leq nm'$, we have $d_\Schreier(w,w') \leq nm'$. So the inequality $\frac{2}{n} d_\Schreier(w,w') \leq d_\LineComplex(w,w')$ follows.
\end{proof}
\begin{lemma}[Recurrence of monodromy action]
  Let $f$ be a function in the Speiser class. Then the monodromy action of $f$ is recurrent.
  \label{lem:recurrence}
\end{lemma}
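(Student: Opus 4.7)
The plan is to use Lemma~\ref{lem:quasi-isom} to reduce recurrence of the monodromy action to recurrence of the line complex, and then invoke the classical ``type theorem'' for line complexes which matches the conformal type of the underlying surface with the recurrence of the graph random walk.

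First, I interpret recurrence of the monodromy action as recurrence of the simple random walk on the Schreier graph $\Schreier$ of the monodromy action with respect to a generating set dual to a spider. Picking the spider $\Spider$ provided by Lemma~\ref{lem:quasi-isom}, we obtain a quasi-isometry between $\Schreier$ and the line complex $\LineComplex$. Both graphs have uniformly bounded vertex degree: since $p_1, p_2 \notin \Sing(f)$, the function $f$ is a local biholomorphism at every vertex of $\LineComplex$, so every vertex has degree exactly $|\Sing(f)| + 1$, and an analogous bound holds for $\Schreier$. Recurrence is a quasi-isometry invariant among graphs of uniformly bounded degree (Kanai's theorem), so it suffices to prove that the simple random walk on $\LineComplex$ is recurrent.

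Next I would exploit the fact that $\LineComplex$ is a proper planar graph embedded in $\C$ with a very restricted face structure. An analysis parallel to that of Lemma~\ref{lem:schreier} (applied to the dual graph $\Spine$ rather than to $\Rose$) shows that every bounded face of $\LineComplex$ contains exactly one point of $\Sing(f) \cup \{\infty\}$ and is bounded by a cycle whose length is controlled by the local degree of $f$ at the corresponding preimage; the unbounded faces correspond to logarithmic singularities over some $a \in \Sing(f) \cup \{\infty\}$ and deformation-retract onto their boundaries. This is precisely the classical setup of the type problem for line complexes: one attaches to $\LineComplex$ the Riemann surface spread out by $f$ and asks whether that surface is parabolic or hyperbolic, the answer being equivalent to recurrence or transience of the simple random walk on $\LineComplex$. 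Since $f$ is an entire function, the underlying surface is $\C$, which is of parabolic type, so $\LineComplex$ is recurrent (cf.\ \cite[Chapter~7]{GoldbergOstrovskii}).

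The main obstacle is making the last step precise: the unbounded faces of $\LineComplex$ mean that standard modern recurrence theorems for bounded-degree planar graphs with \emph{bounded face degree} do not apply off the shelf. One can bypass this either by citing the classical type theorem for line complexes directly, or by constructing a bounded-degree triangulation refinement of $\LineComplex$ (subdividing each unbounded face using the deformation retract onto its boundary to control face degrees) and then invoking a circle-packing-based recurrence criterion (He-Schramm). Either route reduces the recurrence statement to the parabolicity of $\C$, which is immediate, completing the proof.
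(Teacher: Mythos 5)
Your first half coincides with the paper's argument: you reduce recurrence of the monodromy action to recurrence of the simple random walk on the line complex via Lemma~\ref{lem:quasi-isom}, using that both $\Schreier$ and $\LineComplex$ are regular (bounded geometry) graphs so that recurrence is a quasi-isometry invariant; the paper quotes \cite[Theorem~I.3.10]{woess2000random} where you quote Kanai, which is the same content. The gap is in your last step. There is no ``classical type theorem'' asserting that the conformal type of the surface spread is \emph{equivalent} to recurrence or transience of the simple random walk on the line complex $\LineComplex$ itself: the precise result, due to Doyle \cite{Doyle1984}, is that parabolicity is equivalent to recurrence of the \emph{extended} line complex $\LineComplexExt$, and the whole point of Doyle's paper is that the plain Speiser graph does not detect the type in general. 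So your appeal to an equivalence for $\LineComplex$ is not a theorem you can cite, and your fallback options are not worked out either: ``citing the classical type theorem directly'' is circular, and the He--Schramm route would require building a bounded-degree triangulation and then re-establishing the link between its circle-packing type and the conformal type of the surface, which is essentially the hard part of Doyle's argument redone from scratch.

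The missing bridge is short but essential, and it is exactly what the paper supplies: since $f$ is entire, the surface is the parabolic plane, so by Doyle (and Merenkov) the simple random walk on the \emph{extended} line complex is recurrent; and since $\LineComplex$ is a subnetwork of $\LineComplexExt$, Rayleigh monotonicity (\cite[Corollary~I.2.15]{woess2000random}) transfers recurrence to $\LineComplex$. You correctly sensed the obstruction coming from the unbounded faces, but you did not close it; inserting the extended-line-complex-plus-subnetwork argument in place of your ``type theorem'' step would complete your proof and make it agree with the paper's.
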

\begin{proof}
  In order to show that the monodromy action of $f$ is recurrent, it is enough
  to show that for one generating set of $\pi_{1}(X \setminus \Sing(f))$ the
  random walk on the associated Schreier graph is recurrent. Let $L$ be a
  Jordan curve as in Lemma~\ref{def:linegraph}. Then by the previous lemma, the
  associated line complex $\LineComplex$ is quasi-isometric to a Schreier graph
  $\Schreier$ of $f$. Now $\LineComplex$ and $\Schreier$ are both regular
  graphs, so in particular they have bounded geometry. So we can
  apply \cite[Theorem~I.3.10]{woess2000random} to see that the simple random walk on
  $\Schreier$ is recurrent if and only if the simple random walk on
  $\LineComplex$ is recurrent. Now the fact that the simple random walk on a line complex of an entire function is recurrent is well-known, see \cite{Doyle1984, merenkovthesis}. In fact, the random walk on an extended line complex for an entire function is recurrent, and the line complex is a subnetwork of the extended line complex, so by \cite[Corollary~I.2.15]{woess2000random}, the simple random walk on $\LineComplex$ is also recurrent. 
\end{proof}
\section{Dendroid permutations}
We extend the notion of a family of dendroid permutations from \cite[Section~2]{Nekrashevych2009} to infinite sets.
\begin{definition}[Dendroid permutation]
    Let $X$ be a set, 
    let $a_i \in \Sym(X), i \in I$ be a family of permutations.
    The \emph{cycle diagram} $D((a_i)_{i \in I})$ is a 2-dimensional CW-complex built as follows:
    The 0-skeleton is the discrete set $X$. For every $x \in X, i \in I$, we insert a 1-cell $x \xrightarrow{a_i} a_i(x)$.
    So the 1-skeleton is the Schreier graph of $(a_i)_{i \in I}$ on $X$.
    For every $i \in I$ and every finite orbit $x_1 \xrightarrow{a_i} x_2 \xrightarrow{a_i} \cdots \xrightarrow{a_i} x_{k+1} = x_1$ of $a_i$, glue in a 2-cell along the loop $x_1 \xrightarrow{a_i}  \cdots \xrightarrow{a_i} x_1$. 
     We say that the family $\left( a_i \right)_{i
     \in I}$ is a \emph{dendroid set of permutations} if $D((a_i)_{i \in I})$ is contractable.
    \label{def:denperm}
\end{definition}
\begin{lemma}
  Let $X$ be a set, let $a_i \in \Sym(X), i \in I$ be a family of permutations.
  We consider the Schreier graph of $X, \left(a_i  \right)_{i \in I}$ and do the following modification:
  for every finite orbit $a_i$, remove exactly one edge in the orbit. Call the resulting graph $\CoreGraph$.
  Then $\CoreGraph$ is a deformation retract of the cycle diagram $D((a_i)_{i \in I})$.
  In particular $\CoreGraph$ is a tree if and only if family $\left( a_i \right)_{i \in I}$ is a dendroid set of permutations.
  \label{lem:denpermeq}
\end{lemma}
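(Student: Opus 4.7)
The plan is to exhibit an explicit deformation retract from the cycle diagram $D = D((a_i)_{i \in I})$ onto $\CoreGraph$ by collapsing each $2$-cell through the edge that was removed, then use the fact that a connected $1$-complex is contractible if and only if it is a tree.

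First I would observe that the $2$-cells of $D$ are in bijection with finite orbits of the $a_i$, and each $2$-cell is attached along a cyclic path consisting entirely of edges of the form $x \xrightarrow{a_i} a_i(x)$ for a \emph{single} generator $a_i$ and a single (finite) orbit of $a_i$. Since the $1$-cells of the Schreier graph are labeled by the generator $a_i$ used and by their source vertex, each $1$-cell lies on the attaching cycle of at most one $2$-cell: two distinct orbits of the same generator are disjoint, and orbits of different generators use different edges. This non-overlap is the geometric reason we can collapse one disk at a time without interfering with the others.

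Next, for each finite orbit pick one edge $e$ on its boundary cycle — exactly the edge removed when forming $\CoreGraph$. The closed $2$-disk attached along that cycle admits a standard deformation retraction onto the complement of the interior of $e$ in its boundary, fixing that complement pointwise. I would perform all these retractions simultaneously; this is well-defined because distinct $2$-cells have disjoint interiors and no two chosen edges coincide. The resulting map is a deformation retraction from $D$ onto the subcomplex obtained from $D$ by deleting the interior of each chosen edge together with the interior of its attached $2$-cell. By construction, this subcomplex is exactly $\CoreGraph$.

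Finally, since a deformation retraction is a homotopy equivalence, $D$ is contractible if and only if $\CoreGraph$ is contractible. As $\CoreGraph$ is a $1$-dimensional CW-complex (a graph), contractibility is equivalent to being a tree, proving the second assertion. The main technical subtlety is simply checking that the collapses on different $2$-cells can be performed simultaneously, i.e.\ that the chosen edges and the $2$-cell interiors pair up disjointly; once this is in place, everything else is formal.
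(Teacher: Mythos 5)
Your proposal is correct and follows essentially the same route as the paper: the paper also defines the deformation retraction cell-wise, collapsing each closed $2$-cell onto the portion of its boundary remaining after the removed edge (a proper closed interval of $S^1$, or a point for a fixed point orbit), and then concludes via the fact that a graph is contractible if and only if it is a tree. Your explicit check that distinct $2$-cells and chosen edges pair up disjointly is a slightly more careful justification of what the paper treats implicitly by working cell by cell.
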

\begin{proof}
  We can define the deformation retraction cell-wise. For every 2-cell, we numerate the bounding loop $x_1 \xrightarrow{a_i} x_2 \xrightarrow {a_1} \cdots \xrightarrow{a_i} x_{k+1} = x_1$ such that we remove the last edge $x_k \xrightarrow {a_i} x_1$ in our construction of $\CoreGraph$.
  So topologically it is equivalent to define a deformation retraction of $\overline \D$ onto a proper closed interval of $S^1$ (or a point in $S^1$  if $k =1$), it is clear that this is possible.

    As a graph is a tree if and only if it is contractible, $\CoreGraph$ is a tree if and only if it is contractible. So the statement follows via the homotopy equivalence between $\CoreGraph$ and $D((a_i)_{i \in I})$.
\end{proof}
From the previous lemma we see that orbits in dendroid set of permutations must either be disjoint or intersect in at most one point, as we otherwise could construct a cycle in $\CoreGraph$. Also, if a dendroid set of permutations fixes a point $x \in X$, then it is an isolated vertex of $\CoreGraph$, so in fact $X = \left\{ x \right\}$.
\begin{lemma}
  Let $a_i \in \Sym(X), i \in I$ be a dendroid set of permutations. Suppose that $I$ is finite and for every $i \in I$, $a_i$ has only finitely many orbits that are nontrivial, i.e., $a_i$ has only finitely many orbits that consist of more than one point. Then the group generated by the $a_i$ is elementary amenable. 
  \label{lem:dendroidamenable}
\end{lemma}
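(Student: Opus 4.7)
The plan is to exhibit a short exact sequence $1 \to N \to G \to A \to 1$ with $N$ locally finite and $A$ finitely generated abelian, so that $G$ is elementary amenable by closure of that class under extensions. I take $N$ to be the subgroup of elements of $G$ whose support in $X$ is finite. Normality is immediate since conjugation preserves the cardinality of the support, and $N$ is locally finite because any finitely generated subgroup of $N$ has all its elements supported on a common finite set $F \subseteq X$ and hence embeds in the finite group $\Sym(F)$.

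The main work is to show that $G/N$ is abelian, for which it suffices to verify that each commutator $[a_i,a_j]$ lies in $N$, i.e.\ has finite support. The key input is Lemma~\ref{lem:denpermeq}: $\CoreGraph$ is a tree. From this I first observe that any two distinct nontrivial orbits $O$ and $O'$ (of any of the $a_k$) intersect in at most one point, for if $x\neq y$ both lay in $O\cap O'$, then the subpath of $\CoreGraph$ along $O$ from $x$ to $y$ and the corresponding subpath along $O'$ would give two distinct paths between $x$ and $y$, contradicting the tree property. Since each $a_i$ has only finitely many nontrivial orbits, the intersection of supports $S_i\cap S_j$, where $S_i := \{x\in X : a_i(x)\neq x\}$, is a finite union of singletons and hence finite.

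A short case analysis on whether $u$ lies in $S_i$ or $S_j$ now shows that a failure of commutation $a_ia_j(u)\neq a_ja_i(u)$ forces at least one of $u$, $a_i(u)$, $a_j(u)$ to lie in $S_i\cap S_j$; consequently the set $C_{ij}$ of $u\in X$ at which $a_i$ and $a_j$ do not commute is finite. Since $[a_i,a_j](x)=x$ if and only if $a_i$ and $a_j$ commute at $u=a_i^{-1}a_j^{-1}(x)$, the support of $[a_i,a_j]$ is in bijection with $C_{ij}$ and hence finite. Therefore $[a_i,a_j]\in N$ for all $i,j\in I$, so $G/N$ is abelian, and it is finitely generated because $I$ is. The extension $1\to N\to G\to G/N\to 1$ then realises $G$ as an extension of a locally finite group by a finitely generated abelian group, and both factors are elementary amenable, so $G$ is elementary amenable. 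The main obstacle is using the tree property of $\CoreGraph$ to bound orbit intersections; once that finiteness is in hand, the commutator calculation and the closure properties of the class of elementary amenable groups finish the proof.
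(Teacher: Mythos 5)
Your argument is correct, and it arrives at the same structural conclusion as the paper --- the group generated by the $a_i$ is (locally finite)-by-abelian --- but by a somewhat different route. The paper's proof constructs an explicit homomorphism $\Phi \colon G \rightarrow \Z^{J}$, where $J$ indexes the generators with an infinite orbit: it introduces the classes $H_\phi$ of permutations agreeing with $a_j^{\phi(j)}$ off a finite subset of $X_j = X \setminus \Fix(a_j)$ for every $j \in J$, checks $H_\phi H_\psi \subset H_{\phi+\psi}$ and pairwise disjointness, and places each generator in $H_0$ or $H_{\delta_j}$; the kernel then consists of finitely supported elements and is therefore locally finite. Your normal subgroup $N$ of finitely supported elements of $G$ is in fact exactly $\ker \Phi$ (a finitely supported element lies in $H_0$, and the $H_\phi$ are disjoint), so the two extensions coincide; what differs is how abelianness of the quotient is established. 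You verify directly that each commutator $[a_i,a_j]$ has finite support, using the same key consequence of the dendroid property that the paper records right after Lemma~\ref{lem:denpermeq}, namely that orbits of distinct generators meet in at most one point, whence $S_i \cap S_j$ is finite; your case analysis at a point of non-commutation is sound. The only detail worth making explicit is that a nontrivial $a_i$-orbit cannot coincide as a set with a nontrivial $a_j$-orbit for $i \neq j$: this is excluded by the same two-paths-in-a-tree argument, so the finiteness of $S_i \cap S_j$ is unaffected. Your route is slightly more elementary, avoiding the $H_\phi$ calculus, at the cost of not identifying the abelian quotient; the paper's version exhibits it concretely inside $\Z^{J}$, which is the ``translation number'' information encoded by the classes $H_\phi$.
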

\begin{proof}
  Let $X_i = X \setminus \Fix(a_i)$. If $X$ consists of one point then there is nothing to show. Otherwise we have $X = \bigcup_{i \in I} X_i$.
  Let $J \subset I$ be the subset of indices $j \in I$ such that $a_j$ has an infinite orbit.
  As we assume that every $a_i$ has only finitely many nontrivial orbits, $j \in J$ if and only if $X_j$ is infinite.
  For $\phi \colon J \rightarrow \Z$, let $H_\phi \coloneqq \left\{ g \in \Sym(X) \colon \text{ for all } j \in J, g(x) = a^{\phi(j)}_j(x) \text{ for all but finitely many } x \in X_j \right\}$. Then we have the following:
  \begin{itemize}
    \item For $\phi, \psi \colon J \rightarrow  \Z$, we have $H_\phi H_{\psi} \subset H_{\phi + \psi}$ and $(H_\phi)^{-1} = H_{-\phi}$. This is straightforward to check.
    \item For $\phi \not= \psi \colon J \rightarrow \Z$, we have $H_\phi \cap H_\psi = \emptyset$. As $\phi \not= \psi$, there is a $j \in J$ with $\phi(j) \not= \psi(j)$. As $a_j$ has an infinite orbit, $a^{\phi(j)}_j$ and $a^{\psi(j)}_j$ differ on an infinite subset of $X_j$. So now element $g \in \Sym(X)$ cannot cofinally agree to both $a^{\phi(j)}_j$ and $a^{\psi(j)}_j$, so $H_\phi$ and $H_\psi$ have to be disjoint.
    \item For $i \in I \setminus J$, $a_i \in H_0$, for $j \in J$, $a_j \in H_{\delta_j}$ where $\delta_j$ is the Kronecker delta on $I$:
      For $i, j \in I$, we have that the intersection of an $a_i$ orbit and an $a_j$ orbit is at most one point. As we have only finitely many nontrivial $a_i$ orbits and nontrivial $a_j$ orbits, $a_i$ moves only finitely many points in $X_j$. From this the statement easily follows.
  \end{itemize}
  Let $G$ be the group generated by the $a_i$. Then the above shows that we have a group homomorphism $\Phi \colon G \rightarrow \Z^{J}$ uniquely determined by $g \in H_{\Phi(g)}$ for $g \in G$. The kernel is a subgroup of $H_0$. Now every element of $H_0$ has finite support: Every element of $H_0$ has finite support on $X_j$ for $j \in J$. As $X = \bigcup_{i \in I} X_i$ and $X_i$ is finite for $i \in I \setminus J$, we obtain finite support on $X$. In particular $\ker \Phi \subset H_0$ is locally finite.
  So in particular, $G$ is the extension of an abelian group by a locally finite group. So it is elementary amenable.
\end{proof}
\begin{lemma}
  Let $f$ be an entire function in the Speiser class. Let $A$ be a finite set
  in $\C$ that contains the singular set of $f$. Let $\Spider = (\gamma_a)_{a \in A}$ be a spider. Let $t$ be a point in $\C \setminus \Spider$.
  Let $g_a \in \pi_{1} (\C \setminus A, t)$ be the generating set dual to $\Spider$. Let $\Phi \colon \pi_{1} (\C \setminus A, t) \rightarrow \Sym(f^{-1}(t))$ be the group homomorphism induced by the monodromy action. Then $\left( \Phi(g_a)_{a \in A} \right)$ is a dendroid set of permutations.
  \label{lem:denperm}
\end{lemma}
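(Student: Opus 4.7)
The plan is to realize the abstract cycle diagram $D((\Phi(g_a))_{a \in A})$ as an explicit planar subset of $\C$, and then exhibit a deformation retraction of $\C$ onto it, yielding contractibility.

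First, I would invoke Lemma~\ref{lem:schreier} to identify the topological realization of the abstract Schreier graph with the planar graph $\Schreier = f^{-1}(\Rose) \subset \C$. The same lemma classifies the faces of $\Schreier$: every finite face is a topological disk whose boundary loop is precisely a finite $g_a$-orbit lifted to $\Schreier$, and conversely every finite orbit of some $g_a$ arises this way from a unique preimage $w \in f^{-1}(A)$. Consequently, the 2-cells in the abstract definition of the cycle diagram can be identified with the closures of the finite faces of $\Schreier$ in the plane. This gives a canonical homeomorphism
\[
  D((\Phi(g_a))_{a \in A}) \;\cong\; \Schreier \,\cup\, \bigcup_{F\text{ finite face}} \overline{F}  \;=\; \C \setminus \bigcup_{V\text{ infinite face}} V,
\]
where the open infinite faces $V$ run over the logarithmic tracts (both over finite singular values and over $\infty$).

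Next, I would construct a deformation retraction $H \colon \C \times [0,1] \to \C$ of $\C$ onto $D$. On $D$ we put the identity, and on each closed infinite face $\overline{V}$ we use the deformation retraction of $\overline{V}$ onto $\partial V$ guaranteed by the last part of Lemma~\ref{lem:schreier}. Since the open infinite faces $V$ are pairwise disjoint and $D \cap \overline{V} = \partial V$, the local prescriptions agree on overlaps: on $\partial V$ both candidates are the identity. Gluing therefore gives a well-defined map $H$.

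The one genuine point to verify is the global continuity of $H$ at points of $\Schreier$, which I expect to be the main obstacle. Here I would argue as follows: $\Schreier$ is a locally finite planar graph, so every point $x \in \Schreier$ has a small neighborhood in $\C$ meeting only finitely many closed faces (at most the finitely many faces incident to the edges/vertex at $x$). Continuity is thus a local statement at a finite adjunction, which follows from the continuity of the individual deformation retractions $\overline{V} \to \partial V$ up to the boundary and the fact that each one fixes $\partial V$ pointwise throughout the homotopy. Once continuity is secured, $H$ is a deformation retraction of the contractible space $\C$ onto $D$, so $D$ is contractible, meaning $(\Phi(g_a))_{a \in A}$ is a dendroid set of permutations.
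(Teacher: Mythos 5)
Your proposal is correct and follows essentially the same route as the paper: identify the cycle diagram with the Schreier graph $\Schreier = f^{-1}(\Rose)$ together with its finite faces via the face classification in Lemma~\ref{lem:schreier}, then deformation retract the infinite faces onto their boundaries to exhibit the cycle diagram as a deformation retract of the contractible plane. The only difference is that you spell out the gluing/continuity of the retraction at points of $\Schreier$, which the paper leaves implicit.
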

\begin{proof}
We will show that we can obtain $D(\Phi(g_a)_{a \in A})$ as a deformation retract of $\C$. As $\C$ is contractible, this will show that $D(\Phi(g_a)_{a \in A})$ is contractible. 
    Using Lemma~\ref{lem:schreier}, we see that the Schreier graph $\Schreier$ together with the faces with finitely many bounding edges is homeomorphic to $D(\Phi(g_a)_{a \in A})$. Also by Lemma~\ref{lem:schreier}, the faces of $\Schreier$ with infinitely many bounding edges can be deformation retracted onto their boundary. So we can contract $\C$ onto $D(\Phi(g_a)_{a \in A})$.
\end{proof}
Our running example is an example of a structurally finite maps. We obtain from Lemma~\ref{lem:dendroidamenable} and Lemma~\ref{lem:denperm} the following corollary:
\begin{corollary}
  Monodromy groups of structurally finite entire maps are elementary amenable.
  \label{cor:monodromystructurally}
\end{corollary}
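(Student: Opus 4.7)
The plan is to apply Lemma~\ref{lem:dendroidamenable} to the dendroid set of permutations produced by Lemma~\ref{lem:denperm}. Let $f$ be a structurally finite entire map; since $f$ lies in the Speiser class, the set $A = \Sing(f)$ is finite. I choose any spider $\Spider = (\gamma_a)_{a \in A}$ and base point $t \in \C \setminus (A \cup \Spider)$, and let $(g_a)_{a \in A}$ be the dual generating set of $\pi_1(\C \setminus A, t)$. The monodromy group of $f$ is then the image of the monodromy homomorphism $\Phi$, generated by the \emph{finitely many} permutations $\Phi(g_a)$, and by Lemma~\ref{lem:denperm} these form a dendroid set of permutations on $f^{-1}(t)$.

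To invoke Lemma~\ref{lem:dendroidamenable} it remains to check that each $\Phi(g_a)$ has only finitely many nontrivial orbits (orbits of size $\geq 2$). This is where the structural finiteness assumption enters, via the orbit/singularity dictionary of Lemma~\ref{lem:schreier}: finite orbits of $\Phi(g_a)$ correspond to finite preimages of $a$ under $f$, and infinite orbits correspond to logarithmic singularities over $a$. Unpacking this, a regular preimage of $a$ contributes a fixed point (a trivial orbit), an algebraic singularity over $a$ of local degree $m > 1$ contributes a nontrivial finite orbit of size $m$, and each logarithmic singularity over $a$ contributes an infinite orbit. Structural finiteness of $f$ is exactly the statement that across all of $A$ there are only finitely many algebraic and logarithmic singularities, so for every $a \in A$ the permutation $\Phi(g_a)$ has only finitely many orbits of size $\geq 2$. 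Lemma~\ref{lem:dendroidamenable} then applies and yields that the monodromy group is elementary amenable. There is no real obstacle beyond this bookkeeping; the only subtle point is the distinction between trivial orbits (regular preimages, which are allowed to be infinite in number) and nontrivial finite orbits (algebraic singularities, which must be controlled), which is precisely where structural finiteness — rather than mere membership in the Speiser class — is being used.
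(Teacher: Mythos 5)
Your proposal is correct and follows the paper's own route exactly: the paper derives the corollary by combining Lemma~\ref{lem:denperm} (the dual generators act as a dendroid set of permutations) with Lemma~\ref{lem:dendroidamenable}, with structural finiteness guaranteeing, via the orbit dictionary of Lemma~\ref{lem:schreier}, that each generator has only finitely many nontrivial orbits. Your write-up simply makes explicit the bookkeeping (regular preimages versus algebraic and logarithmic singularities) that the paper leaves implicit.
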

From the proof of Lemma~\ref{lem:dendroidamenable} it is also easy to see that monodromy groups of structurally finite entire maps can be realized as subgroups of Houghton's family of groups \cite{Houghton}.
\section{Marked entire maps and Bisets}
In this section we introduce the language of bisets used for entire maps. See \cite[Section 2]{nekrashevych2005self} for a general introduction to bisets in the context of self-similar groups on finite alphabets, and \cite{BartholdiDudko2017} for an introduction for bisets to rational Thurston theory.
We follow the acting convention from \cite{nekrashevych2005self}. The proof of most statements in this section are essentially as in the case for polynomials and finite alphabets.
We compare the formalism of bisets to the definition of iterated monodromy groups for entire functions given in \cite{Reinkeexp}.
\subsection{Bisets and non-autonomous automata}
  Let $G$ and $H$ be groups. A \emph{$G$-$H$-biset} $\biset{\M}{G}{H}$ is a set $\M$ together with a left $G$-action and a right $H$-action such that the actions commute, i.e.\ we have $g \cdot (m \cdot h) = (g \cdot m) \cdot h$ for all $g \in G, m \in \M, h \in H$.

  If $\biset{\M}{G}{H}$ and $\biset{\NN}{H}{K}$ are bisets, the \emph{tensor
  product} $\biset{\M \otimes \NN}{G}{K}$ is $\faktor{\M\times \NN}{\sim}$ where
  $\sim$ is the equivalence relation generated by $(m \cdot h, n) \sim (m, h
  \cdot n)$. We will denote the element of $\M \otimes \NN$ represented by $(m,n)$
  also as $m \otimes n$. The tensor product $\biset{\M \otimes \NN}{G}{K}$ is again a $G$-$K$-bisets via $g \cdot (m \otimes n) = (g \cdot m) \otimes n$ and similar for the $K$-right action.

  For a biset $\biset{\M}{G}{H}$, we can consider the set of $H$-orbits $\orbits{\M}{G}{H}$ with the induced $G$-left action on $H$-orbits. Given two bisets $\biset{\M}{G}{H}$ and $\biset{\NN}{H}{K}$, we have a natural map
  \begin{eqnarray*}
    \orbits{\M \otimes \NN}{G}{K} &\rightarrow& \orbits{\M}{G}{H} \\
                            m \otimes n \cdot K &\mapsto& m \cdot H
  \end{eqnarray*}

  We say that the biset $\biset{\M}{G}{H}$ is \emph{right free} if the right
  action of $H$ is free on $\M$. In this case, we call a representative system
  $X \subset \M$ of $\faktor{\M}{H}$ also a basis of $\M$. A basis $X$ has then
  the property that every element $m \in \M$ can be written as $x \cdot h$ for
  a unique pair $x \in X, h \in H$. If the basis $X$ is fixed, we will denote
  the unique factorization of this form for $g \cdot x$ as $g(x) \cdot g_{|x}$.
  We note that $g(x)$ is then the representative of the $H$ orbit of $x$.

  If $\biset{\M}{G}{H}$ is right free with basis $X$ and $\biset{\NN}{H}{K}$ is right free with basis $Y$, by~\cite[Proposition~2.3.2]{nekrashevych2005self} %
  we have that $\biset{\M \otimes \NN}{G}{K}$ is again right free with basis $X \times Y$, in the sense that every element of $\M \otimes \NN$ can be written as $x \otimes y \cdot k$ for a unique triple $x \in X, y \in Y, k \in K$.

  We will need different notions of comparing bisets. We follow the naming convention of \cite{BartholdiDudko2017}:
  given a pair of group isomorphisms $\Phi \colon G \rightarrow G'$, $\Psi \colon H \rightarrow H'$, a \emph{$\Phi$-$\Psi$-congruence}
  between a $\biset{\M}{G}{H}$ and $\biset{\NN}{{G'}}{{H'}}$ is a bijection $\Xi \colon \biset{\M}{G}{H} \rightarrow \biset{\NN}{{G'}}{{H'}}$
  with $\Xi(g\cdot m \cdot h) = \Phi(g) \cdot \Xi(m) \cdot \Psi(h)$. If $G = H, G' = H', \Psi = \Phi$, then we say that $\Xi$ is a \emph{$\Phi$-conjugacy}.
  If $G=G', H=H'$ and $\Phi = \id_G, \Psi = \id_H$, then we say that $\Xi$ is an \emph{isomorphism} of $G$-$H$-bisets.
\begin{definition}
  For a set $A$, let $A_+$ be the disjoint union of $A$ together with the singleton $\{\Id\}$.  An (non-autonomous) \emph{automaton} is a map $\tau \colon A_+ \times X \rightarrow X \times B_+$ such that $\tau(\Id,x) = (x,\Id)$ for all $x \in X$. We call $A$ the \emph{input state set}, $B$ the \emph{output state set} and $X$ the $\emph{alphabet}$ of $\A$. If $\tau(a,x) = (y,b)$, we also write $a_{|x} = b, a(x) = y$. We say that $a$ restricts to $b$ at $x$. If for every $a \in A$, the map $x \mapsto a(x)$ is bijective, we call $\A$ a \emph{group automaton}. 
  \label{def:automaton}
\end{definition}
The automata that we consider are non-autonomous in the sense that they have in general two different state sets. They are also called
``time-varying automata'' or ``piecewise automata''.
\begin{lemma}
  Let $\tau \colon A_+ \times X \rightarrow X \times B_+$ be a group automaton. Let $F_A$ and $F_B$ be the free groups on $A$ and $B$ respectively.
  Then we can associate a biset $\biset{\M}{{F_A}}{{F_B}}$ that is right free with basis $X$ and the action described by $\tau(a,x) = (y,b)$ if and only if $a \cdot x = y \cdot b$ for all $a \in A_+, b\in B_+, x,y \in X$.
  \label{lem:automatonbiset}
\end{lemma}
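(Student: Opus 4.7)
The plan is to construct $\M$ concretely as $X \times F_B$, equipped with the obvious right $F_B$-action $(x,w) \cdot b = (x,wb)$, and to define the left $F_A$-action so that the generators $a \in A$ act according to the automaton. Since the right action by concatenation is manifestly free with $X \times \{e\}$ as a set of orbit representatives, the set $X$ (identified with $X \times \{e\}$) will automatically be a basis, and every element of $\M$ will have the unique factorization $x \cdot w$ with $x \in X$, $w \in F_B$. The nontrivial content is therefore entirely in the construction of the left action.

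For the generators, I would set $a \cdot (x,w) := (a(x),\, a_{|x} \cdot w)$ for $a \in A$, using the transition data from $\tau$. To extend this definition to all of $F_A$ via the universal property of the free group, I need each generator to act by a bijection on $\M$. This is where the hypothesis that $\tau$ is a group automaton enters: because $x \mapsto a(x)$ is a bijection of $X$, one can define an inverse map $(y,w') \mapsto (a^{-1}(y),\, (a_{|a^{-1}(y)})^{-1} \cdot w')$, and a direct check shows it is two-sided inverse to the map above. Hence each $a \in A$ determines an element of $\Sym(\M)$, and by freeness of $F_A$ this extends uniquely to a left action of $F_A$ on $\M$.

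It remains to verify that the two actions commute and that the action formula realizes the automaton. Commutativity is immediate on generators, since both $a \cdot ((x,w)\cdot b)$ and $(a \cdot (x,w)) \cdot b$ equal $(a(x),\, a_{|x} w b)$; because the relation $g \cdot (m \cdot h) = (g \cdot m) \cdot h$ is closed under products and inverses in $F_A$ and $F_B$, commutativity on all of $F_A \times \M \times F_B$ follows. For the automaton compatibility, evaluating on the basis element $x = (x,e)$ gives $a \cdot x = (a(x),\, a_{|x}) = a(x) \cdot a_{|x}$, so $a \cdot x = y \cdot b$ with $y \in X$, $b \in F_B$ holds if and only if $y = a(x)$ and $b = a_{|x}$, which is exactly $\tau(a,x) = (y,b)$.

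The main obstacle is the extension step: one must know that the prescribed action of each generator really is a permutation of $\M$, which is the precise role of the group automaton hypothesis. Everything else — freeness of the right action, the basis property, commutativity of the two actions, and the recovery of $\tau$ — follows essentially by inspection from the product structure $\M = X \times F_B$. One should also note that the resulting biset depends on $\tau$ only up to isomorphism: a different choice of basis realization amounts to relabelling $X \times \{e\}$ inside $\M$, so the biset $\biset{\M}{F_A}{F_B}$ is canonically associated with the automaton.
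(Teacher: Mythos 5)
Your construction is exactly the one in the paper: realize $\M$ as $X \times F_B$ with right action by concatenation, let each generator $a \in A$ act by $(x,w) \mapsto (a(x), a_{|x}w)$ with the same explicit inverse, extend by the universal property of $F_A$, and verify commutativity, freeness, and the automaton formula on the basis. The proposal is correct and takes essentially the same route as the paper's proof.
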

This is standard, we give a proof for completeness. 
\begin{proof}
  We can take as underlying set of $\biset{\M}{{F_A}}{{F_B}}$ the set $X \times
  F_B$. The right action of $F_B$ is given by $(x,g) \cdot h = (x,gh)$ for $g,h
  \in F_B$. For $a \in A$,
  we define a mapping $a \cdot {{-}} \colon X \times F_B \rightarrow X \times
  F_B$ via $a \cdot (x,g) = (a(x),a_{|x}g)$. As $x \mapsto a(x)$ is a
  bijection, it is clear that that $a \cdot {{-}}$ is a bijection, with inverse given by $a^{-1} \cdot (x, g) = (a^{-1}(x),(a_{|a^{-1}(x)})^{-1}g)$.
    By the universal property of $F_A$, we now have a left action of $F_A$ on $X \times F_B$.
    It is straightforward to see that the left action of $F_A$ commutes with the right action of $F_B$, so we indeed have a biset $\biset{\M}{{F_A}}{{F_B}}$.

It is clear that the right action is free with $X \times \left\{ \Id \right\} \cong X$ a basis of $\biset{\M}{{F_A}}{{F_B}}$. 
Moreover, as $a \cdot (x, \Id) = (a(x), a_{|x}) = (a(x), \Id) \cdot a_{|x}$, the biset has the described action.
\end{proof}
\subsection{Bisets of marked entire functions}
\begin{definition}
  A \emph{marked entire map}  is a map $f \colon (\C, A, s) \rightarrow (\C, B, t)$ where $f$ is an entire function (in the Speiser class), $A, B \subset \C$ are finite sets,
  $s \in \C \setminus A, t \in \C \setminus B$, $f(A) \subset B$, the singular set of $f$ is contained in $B$.
\end{definition}
\begin{definition}
  Let $f \colon (\C, A, s) \rightarrow (\C, B, t)$ be a marked entire map. 
  The \emph{biset} $\M_f$ of $f$ is the set of homotopy classes of paths from $a$ to an element of $f^{-1}(b)$ in $\C \setminus A$.
  The group $\pi_{1}(C \setminus A, s)$ acts on $\M_f$ on the right by precomposition of loops, and $\pi_{1}(\C \setminus B, t)$ acts on the left via
  postcomposition with lifts.
  \label{def:gbiset}
\end{definition}
\begin{lemma}
  Let $f \colon (\C, A, s) \rightarrow (\C, B, t)$ be a marked entire map. 
  The biset $\M$ of $f$ is right free, and $\M / \pi_1(\C \setminus A, s)$ is a left-$\pi_{1}(\C \setminus B, t)$ set isomorphic to the set $f^{-1}(t)$ with the monodromy action.
  \label{lem:bisetmonodromy}
\end{lemma}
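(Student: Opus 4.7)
My plan is to define the evaluation map $\mathrm{ev} \colon \M_f \to f^{-1}(t)$ sending a homotopy class $[\gamma]$ to its terminal endpoint, which lies in $f^{-1}(t)$ by definition of $\M_f$ and is well defined because homotopies rel endpoints preserve endpoints. The proof then splits into two parts: freeness of the right action of $\pi_1(\C \setminus A, s)$, and identification of the orbit space via $\mathrm{ev}$ as a $\pi_1(\C \setminus B, t)$-set.

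For right freeness, I would suppose $[\gamma] \cdot [\ell] = [\gamma]$ for some $[\ell] \in \pi_1(\C \setminus A, s)$. Unwinding the right action gives $\gamma \circ \ell \simeq \gamma$ rel endpoints in $\C \setminus A$; concatenating on the left with $\gamma^{-1}$ and applying standard fundamental-groupoid cancellation then yields $\ell \simeq \mathrm{const}_s$ in $\C \setminus A$, hence $[\ell] = 1$.

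For the quotient, I first note that $\mathrm{ev}$ is surjective because $\C \setminus A$ is path connected (the complement of a finite set in $\C$) and $f^{-1}(t) \cap A = \emptyset$, which holds since $f(A) \subset B$ and $t \notin B$. Next, $\mathrm{ev}$ descends to a bijection on $\M_f / \pi_1(\C \setminus A, s)$: the right action obviously preserves the terminal endpoint, and conversely if $\gamma_1, \gamma_2 \in \M_f$ both end at some $w \in f^{-1}(t)$ then $\ell \coloneqq \gamma_2^{-1} \circ \gamma_1$ is a loop at $s$ in $\C \setminus A$ satisfying $\gamma_2 \circ \ell \simeq \gamma_1$, placing the two classes in the same orbit. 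Finally, for left equivariance, the left action of $[\ell] \in \pi_1(\C \setminus B, t)$ on a class with terminal endpoint $w$ sends it to a class whose terminal endpoint is the endpoint of the lift $\tilde\ell^w$, which by Definition~\ref{def:monodromy} is precisely the image of $w$ under the monodromy action. The required lifts exist and are unique by Lemma~\ref{lem:lifting}, because $f$ restricts to an unbranched covering $\C \setminus f^{-1}(B) \to \C \setminus B$ thanks to $\mathrm{Sing}(f) \subset B$.

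I do not foresee a genuine obstacle: all verifications reduce to standard fundamental-groupoid manipulations together with path lifting for the unbranched covering $f \colon \C \setminus f^{-1}(B) \to \C \setminus B$, and one must only keep track of which ambient open set (either $\C \setminus A$ or $\C \setminus f^{-1}(B)$) each path or homotopy lives in. The one small check is that the loop $\gamma_2^{-1} \circ \gamma_1$ witnessing orbit equality really lies in $\C \setminus A$, which is immediate because both $\gamma_i$ do by the definition of $\M_f$.
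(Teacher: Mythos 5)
Your argument is correct: the evaluation map, the groupoid cancellation for right freeness, the orbit identification via paths with common endpoint, and the equivariance through lifts of loops (which stay in $\C\setminus f^{-1}(B)\subset\C\setminus A$ since $f(A)\subset B$ and $\Sing(f)\subset B$) are exactly the standard verification. The paper itself gives no details and simply cites the analogous polynomial case \cite[Proposition~5.1.1]{nekrashevych2005self}, so your proof is essentially the same (standard) approach, written out in full.
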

\begin{proof}
  The proof is analogous to the case for polynomials. See for example \cite[Proposition~5.1.1]{nekrashevych2005self}.
\end{proof}
\begin{lemma}
  Let $f \colon (\C, A, s) \rightarrow (\C, B, t)$ and $g \colon (\C, B, t) \rightarrow (\C, C, u)$ be marked entire maps.
  Then the composition $g \circ f \colon (\C, A, s) \rightarrow (\C, C, u)$ is a marked entire map and the
  biset of $g \circ f$ is isomorphic to $M_g \otimes M_f$, the tensor product of $M_g$ and $M_f$ over the $\pi_{1}(C \setminus B, t)$ action. 
  Moreover the mapping $M_g \otimes M_f / \pi_1(\C \setminus A, s) \rightarrow
  M_g / \pi_1(\C \setminus B, t) $ corresponds to the map $f$ on
  $f^{-1}(g^{-1}(u))$ to $g^{-1}(u)$.
  \label{lem:bisetcomp}
\end{lemma}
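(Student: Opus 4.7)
\medskip

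\noindent\textbf{Proof sketch.} First I would check that $g\circ f$ is a marked entire map. Both $A,C$ are finite by assumption, and $(g\circ f)(A)\subset g(B)\subset C$. For the singular values, any critical or asymptotic value of $g\circ f$ is either a singular value of $g$, which lies in $C$, or the image under $g$ of a singular value of $f$, which lies in $g(B)\subset C$; this is immediate from the chain rule and the behaviour of asymptotic curves under composition. So $\Sing(g\circ f)\subset C$ as required.

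Next I would construct the comparison map $\Phi\colon \M_g\otimes \M_f \to \M_{g\circ f}$. Given a representative pair $(p,q)$ with $p$ a path from $t$ to some $t'\in g^{-1}(u)$ in $\C\setminus B$ and $q$ a path from $s$ to some $s'\in f^{-1}(t)$ in $\C\setminus A$, use Lemma~\ref{lem:lifting} to let $\tilde p$ be the unique $f$-lift of $p$ starting at $s'$. Since $f$ restricts to an unbranched covering $\C\setminus f^{-1}(B)\to\C\setminus B$ and $A\subset f^{-1}(B)$, the lift $\tilde p$ stays in $\C\setminus A$, and its endpoint lies in $f^{-1}(t')\subset(g\circ f)^{-1}(u)$. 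Define $\Phi([p]\otimes[q])=[\tilde p\cdot q]$. To verify this descends to the tensor product, I would check that for a loop $h\in\pi_1(\C\setminus B,t)$, lifting $p\cdot h$ from $s'$ is the same as first lifting $h$ from $s'$ to some $s''$ (this is the action of $h$ on $q$ by postcomposition) and then lifting $p$ from $s''$; hence $[p\cdot h]\otimes[q]$ and $[p]\otimes[h\cdot q]$ have the same image. Left $\pi_1(\C\setminus C,u)$-equivariance is a formal consequence of the functoriality of lifting under $g$ (a loop at $u$ lifts by $g$ to a path postcomposed with $p$, and then by $f$ on top), and right $\pi_1(\C\setminus A,s)$-equivariance follows from the definition of the right action by precomposition on $q$.

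To show $\Phi$ is a bijection, I would exploit that all three bisets are right free and compare bases. By Lemma~\ref{lem:bisetmonodromy}, a basis $X$ of $\M_f$ is obtained by choosing one path to each point of $f^{-1}(t)$, a basis $Y$ of $\M_g$ by choosing one path to each point of $g^{-1}(u)$, and by the cited result from \cite{nekrashevych2005self} the set $Y\times X$ is a basis of $\M_g\otimes\M_f$. Under $\Phi$, the pair $(y,x)$ lands on the unique path whose endpoint is the $f$-lift of $y$ starting at the endpoint of $x$. Since $f$ is an unbranched covering over $\C\setminus B$, for fixed $y$ landing at $t'\in g^{-1}(u)$ the map ``endpoint of the lift starting at $s'$'' is a bijection $f^{-1}(t)\to f^{-1}(t')$; varying $y$ over $g^{-1}(u)$ then gives a bijection $Y\times X\to(g\circ f)^{-1}(u)$. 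Hence $\Phi$ sends a basis to a basis of $\M_{g\circ f}$ (again via Lemma~\ref{lem:bisetmonodromy}), and combined with right-$\pi_1(\C\setminus A,s)$-equivariance this forces $\Phi$ to be a bijection of bisets.

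Finally, for the statement about orbit spaces: the right $\pi_1(\C\setminus A,s)$-orbit space $\M_{g\circ f}/\pi_1(\C\setminus A,s)$ is in natural bijection with $(g\circ f)^{-1}(u)$, and $\M_g/\pi_1(\C\setminus B,t)$ with $g^{-1}(u)$, by Lemma~\ref{lem:bisetmonodromy}. The natural map $\M_g\otimes \M_f/\pi_1(\C\setminus A,s)\to \M_g/\pi_1(\C\setminus B,t)$ sends $[p]\otimes[q]\cdot\pi_1(\C\setminus A,s)$ to $[p]\cdot\pi_1(\C\setminus B,t)$; tracing through the bijection $Y\times X\to(g\circ f)^{-1}(u)$ above, a pair $(y,x)$ with $y$ landing at $t'$ and whose image endpoint is $w\in f^{-1}(t')$ is sent to $t'=f(w)$, which is exactly the map $f\colon (g\circ f)^{-1}(u)\to g^{-1}(u)$. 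The main delicate point in the argument is verifying well-definedness and equivariance of $\Phi$ in the second paragraph, where one has to be careful that $f$-lifts genuinely avoid $A$ (using $A\subset f^{-1}(B)$) and that concatenation order matches the ``from right to left'' convention when passing through the equivalence relation defining $\otimes$.
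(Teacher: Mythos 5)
Your proposal is correct and follows essentially the same route as the paper: the isomorphism is defined by sending $p\otimes q$ to the concatenation $p^{z}\cdot q$, where $p^{z}$ is the $f$-lift of $p$ starting at the endpoint $z$ of $q$, and the orbit-space statement is read off from this construction. The paper leaves the verification of well-definedness, equivariance, and bijectivity to the reader (citing Nekrashevych), whereas you supply these checks explicitly; they are accurate, including the point that $f$-lifts avoid $A$ because $A\subset f^{-1}(B)$.
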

\begin{proof}
  The isomorphism is given as follows: an element of $M_g$ is represented by a path $p$ from $t$ to an element of $g^{-1}(u)$. An element of $M_f$ is represented by a path $q$ from $s$ to an element of $f^{-1}(t)$, say $z$. Let $p^z$ be the lift $p$ with respect to $f$ starting at $z$. We send $ p \otimes q$ to the concatenation $p^z \cdot$. It is straightforward to check that this gives an isomorphism, compare for example \cite[Proposition~5.5]{Nekrashevych2009}.

  From the construction the second statement also follows.
\end{proof}
\subsection{Self-Similar groups and Iterated Monodromy Actions}
We recall definitions surrounding self-similar groups and automata groups on infinite alphabets. See also \cite{sidki2000}, but we use the language of bisets as our starting point similar to \cite{Nekrashevych2009}.
\begin{definition}
  Let $X$ be a set. The $X$-regular tree has as vertex set $X^*$, the set of finite words in $X$. The edges are of the form $v \rightarrow vx$, the root of the tree is the empty word. By abuse of notation, we denote the $X$-regular tree by $X^*$. Let $\Aut(X^*)$ be the set of automorphisms of $X^*$ as a rooted tree. For an element $g \in \Aut(X^*)$ and a word $v \in X^*$, there is a unique element $g_{|v} \in \Aut(X^*)$ with $g(vw) = g(v)g_{|v}(w)$ for all $w \in X^*$.
  This is called the \emph{section} of $g$ at $v$. A subgroup $G \subset \Aut(X^*)$ is called self-similar if it is closed under taking sections.

  An automorphism $g \in \Aut(X^*)$ is a $\emph{finite state}$ automorphism if the set of sections $\left\{ g_{|v} \colon v \in X^* \right\}$ is finite.
  An automorphism $g \in \Aut(X^*)$ has $\emph{bounded activity}$ if there is a $C > 0$ such that for every $n \in \N$, the cardinality of $\left\{ v \in X^n \colon g_{|v} \text{ non-trivial } \right\}$ is bounded by $n$.
  \label{def:self-similar}
\end{definition}
\begin{lemma}
  Let $G$ be a group, and $\biset{\M}{G}{G}$ a right-free biset. Let $X$ be a basis of $\M$. Then $X^n$ is a basis of $\M^{\otimes n}$ and for every $g \in G$, the map $v \mapsto g(v)$ is a rooted tree automorphism of $X^*$. There is a group homomorphism $\Phi \colon G \rightarrow \Aut(X^*)$ compatible with taking sections in the following sense: $\Phi(g)_{|v} = \Phi(g_{|v})$. In particular, the image is self-similar.
  \label{lem:biset2self-similar}
\end{lemma}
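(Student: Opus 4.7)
The plan is to proceed by induction on $n$, unpacking the recursive formula for the left action on a tensor product in terms of the basis $X$.

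First I would establish that $X^n$ is a basis of $\M^{\otimes n}$. For $n=1$ this is given; for the inductive step, $\M^{\otimes (n+1)} = \M^{\otimes n} \otimes \M$ is a tensor product of two right-free bisets over $G$, so by \cite[Proposition~2.3.2]{nekrashevych2005self} (cited earlier in the excerpt) the product $X^n \times X$ is a basis; under the natural identification $X^n \times X \cong X^{n+1}$ this gives the claim. Combined with the uniqueness of factorisation, this means that for every $g \in G$ and every word $v \in X^n$ there are unique $w \in X^n$ and $h \in G$ with $g \cdot v = w \cdot h$ in $\M^{\otimes n}$; we then set $g(v) \coloneqq w$ and $g_{|v} \coloneqq h$, consistently with the definition already in use for $n=1$.

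Second I would derive the fundamental recursion. Writing $v = x v'$ with $x \in X$ and $v' \in X^{n}$, a direct computation in the tensor product gives
\begin{eqnarray*}
g \cdot (x \otimes v') &=& (g(x) \cdot g_{|x}) \otimes v' \;=\; g(x) \otimes (g_{|x} \cdot v') \\
&=& g(x) \otimes \bigl(g_{|x}(v') \cdot (g_{|x})_{|v'}\bigr) \;=\; \bigl(g(x)\, g_{|x}(v')\bigr) \cdot (g_{|x})_{|v'}.
\end{eqnarray*}
By uniqueness of factorisation this yields $g(xv') = g(x)\, g_{|x}(v')$ and $g_{|xv'} = (g_{|x})_{|v'}$. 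The first identity shows that $v \mapsto g(v)$ preserves the prefix relation on $X^*$, hence defines a graph endomorphism of the rooted tree $X^*$; using $g^{-1}$ it is a bijection, so it is a rooted tree automorphism $\Phi(g) \in \Aut(X^*)$.

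Third I would check that $\Phi$ is a group homomorphism. Associativity and commutativity of the two actions on $\M^{\otimes n}$ give $(gh) \cdot v = g \cdot (h \cdot v)$ for all $v \in X^n$, and uniqueness of factorisation then forces $(gh)(v) = g(h(v))$, so $\Phi(gh) = \Phi(g)\Phi(h)$. The unit $\Id$ acts trivially on each basis element, so $\Phi(\Id) = \id$. The section identity $\Phi(g)_{|v} = \Phi(g_{|v})$ is immediate from the recursion above by induction on $|v|$: at length one it is literally the definition, and at length $n+1$ we apply the recursion twice. Finally, the image $\Phi(G) \subset \Aut(X^*)$ is self-similar because for any $\Phi(g)$ and any $v \in X^*$ the section $\Phi(g)_{|v} = \Phi(g_{|v})$ lies in $\Phi(G)$.

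There is no genuine obstacle here; the whole lemma is a bookkeeping exercise in the tensor calculus of right-free bisets. The only point where one must be careful is the bracketing when iterating the recursion, i.e., verifying that the two natural candidates for $g_{|xv'}$ — namely the restriction computed in $\M^{\otimes(n+1)}$ and the iterated restriction $(g_{|x})_{|v'}$ — agree; this is guaranteed precisely because $X^{n+1}$ is a basis, which is why the first step of the plan is essential.
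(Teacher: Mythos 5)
Your proposal is correct and matches the paper, which gives no argument of its own and simply cites \cite[Proposition~2.3.3]{nekrashevych2005self}; your write-up is exactly the standard bookkeeping proof found there (basis of $\M^{\otimes n}$ via the cited Proposition~2.3.2, the recursion $g(xv') = g(x)\,g_{|x}(v')$, $g_{|xv'} = (g_{|x})_{|v'}$ from uniqueness of factorization, then the homomorphism, section, and self-similarity statements). The only cosmetic slip is that you announce the induction via $\M^{\otimes(n+1)} = \M^{\otimes n}\otimes\M$ but then split off the first letter, i.e.\ work with $\M\otimes\M^{\otimes n}$; this is harmless, since both decompositions yield the same basis identification of $X^{n+1}$, but you should state the one you actually use.
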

\begin{proof}
  This is standard. See for example \cite[Proposition~2.3.3]{nekrashevych2005self}. 
\end{proof}
\begin{definition}[Iterated Monodromy Group]
  Let $f \colon (\C, A, s) \rightarrow (\C, A, s)$ be a marked entire map. Choose a basis $X$ of $\M_f$.
  Then $X^n$ is a basis of $\M_f^{\otimes n}$, and we have a bijection $X^n \cong \M_f^{\otimes n}/\pi_{1}(\C \setminus A, s) \cong f^{-n}(s)$.
  For $g \in \pi_{1}(\C \setminus A, s)$, the mapping $v \mapsto g(v)$ on $X^n$ agrees with the monodromy action of $g$ under $f^n$ on $\cong f^{-n}(s)$.
  The \emph{iterated monodromy group} of $f$ is the image of $\Phi$ as in Lemma~\ref{lem:biset2self-similar} for the biset of $f$.
  \label{def:img}
\end{definition}
\begin{lemma}
  Let $f \colon (\C, A, s) \rightarrow (\C, A, s)$ be a marked entire map. The iterated monodromy group of $f$ and $f^n$ are isomorphic for every $n \geq 1$.
  \label{lem:imgiter}
\end{lemma}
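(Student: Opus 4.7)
The plan is to identify both iterated monodromy groups as quotients of the same group $\pi_1(\C \setminus A, s)$ and to show that the kernels of the two defining homomorphisms coincide. Applying Lemma~\ref{lem:bisetcomp} inductively, the biset of the marked map $f^n \colon (\C, A, s) \to (\C, A, s)$ is isomorphic to the $n$-fold tensor product $\M_f^{\otimes n}$, and if $X$ is a basis of $\M_f$ then $X^n$ is a basis of $\M_f^{\otimes n}$. Write $\Phi \colon \pi_1(\C \setminus A, s) \to \Aut(X^*)$ and $\Phi_n \colon \pi_1(\C \setminus A, s) \to \Aut((X^n)^*)$ for the homomorphisms produced by Lemma~\ref{lem:biset2self-similar}. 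By Definition~\ref{def:img}, it suffices to prove that $\ker \Phi = \ker \Phi_n$, since both iterated monodromy groups are then the common quotient $\pi_1(\C \setminus A, s)/\ker \Phi$.

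Next I would use Lemma~\ref{lem:bisetmonodromy}, applied to $f^k$, to identify the $k$-th level $X^k$ of the tree $X^*$ with the fibre $f^{-k}(s)$ in such a way that the action of $\Phi(g)$ on level $k$ agrees with the $f^k$-monodromy action of $g$ on $f^{-k}(s)$. Since a rooted tree automorphism is trivial if and only if it acts trivially on every level, $\ker \Phi$ equals the set of $g$ acting trivially on $f^{-k}(s)$ for every $k \geq 1$, and analogously $\ker \Phi_n$ equals the set of $g$ acting trivially on $(f^n)^{-k}(s) = f^{-nk}(s)$ for every $k \geq 1$.

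The crucial compatibility step is to verify that the covering $f \colon f^{-k}(s) \to f^{-(k-1)}(s)$ intertwines the two monodromy actions: for $y \in f^{-k}(s)$ and $g \in \pi_1(\C \setminus A, s)$, $f(g \cdot y) = g \cdot f(y)$, where the left-hand side uses $f^k$-monodromy and the right-hand side uses $f^{k-1}$-monodromy. This follows from functoriality of path lifting, since if $p^y$ denotes the $f^k$-lift of a representative loop of $g$ starting at $y$, then $f \circ p^y$ is the $f^{k-1}$-lift of the same loop starting at $f(y)$, and the endpoints transform accordingly. Consequently, if $g$ acts trivially on $f^{-m}(s)$, then $g$ acts trivially on $f^{-m'}(s)$ for every $m' \leq m$. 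For each $m \geq 1$ one can choose $k$ with $nk \geq m$, so triviality at depth $nk$ forces triviality at depth $m$; this gives $\ker \Phi_n \subseteq \ker \Phi$, and the reverse inclusion is immediate.

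The argument is mostly formal bookkeeping given the two preceding lemmas; the one point that needs care is the compatibility of monodromy actions across consecutive levels via $f$, which must be established exactly once so that the subsequence $\{f^{-nk}(s)\}_{k \geq 1}$ determines the same kernel as the full sequence $\{f^{-k}(s)\}_{k \geq 1}$. I expect no serious obstacle beyond this check, since the identification of $X^k$ with $f^{-k}(s)$ and the resulting description of the tree action is exactly what Definition~\ref{def:img} is set up to provide.
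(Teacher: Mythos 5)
Your proof is correct and follows essentially the same route as the paper: the paper's (much terser) argument is exactly that the monodromy action of $f^m$ determines that of $f^k$ for $k<m$, so the actions of the iterates of $f^n$ determine those of all iterates of $f$, which is your kernel-comparison via cofinality of the levels $nk$. You have merely filled in the details (identification of levels with fibres, intertwining under $f$, equality of kernels) that the paper leaves implicit.
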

\begin{proof}
  The monodromy action of $f^m$ determines the monodromy action of $f^k$ for $k < m$. So The monodromy action of all iterates of $f^n$ determine the monodromy actions of all iterates of $f$. From this the isomorphism follows.
\end{proof}
\section{Dendroid automata and pullbacks of spiders}
In this section we define dendroid automata. This is a generalization of the notion introduced in \cite{Nekrashevych2009} to infinite alphabets.
\begin{definition}
  Let $\tau \colon A_+ \times X \rightarrow X \times B_+$ be a group automaton. We call it a \emph{dendroid automaton} if the conditions are satisfied:
    \begin{itemize}
        \item $\left( x \mapsto a(x) \right)_{a \in A}$ is a dendroid set of permutations.
        \item For all $b \in B$ there are unique $a \in A, x \in X$ with $a_{|x} = b$.
        \item For all $a \in A$, all restrictions of $a$ along a infinite orbit of $a$ are trivial, and for every finite orbit of $a$, all restrictions but at most one along the orbit are trivial.
    \end{itemize}
\end{definition}
\begin{example}
  Let $A = \left\{ g,h \right\}$  and $X = \Z \cup \left\{ * \right\}$. Consider the following mapping
  $\tau \colon A_+ \times X \rightarrow X \times A_+$ given by
  \begin{eqnarray*}
    \tau(g, z) &=& (z+1, \Id) \\
    \tau(g,*) &=& (*, h) \\
    \tau(h,*) &=& (0, g) \\
    \tau(h,0) &=& (*, \Id) \\
    \tau(q,z) &=& (z, \Id) \text{ for all other cases.}
  \end{eqnarray*}
  Then $\tau$ describes a dendroid automaton. In order to see this, it will be
  convenient to consider the dual Moore graph of $\tau$: it has as vertex set
  $X$ and for every $x\in X, q \in A$ we have an edge from $x$ to $q(x)$
  labeled by $q_{|x}$ and colored according to $\textcolor{orangehex}{q=g}$ or
  $\textcolor{lilahex}{q=h}$. See Figure~\ref{fig:DualMoore}. In fact, we will
  see later that the almost the same figure also encodes the biset of $(1 - z)
  \exp z$.

  We can contract the cycle diagram $D(g,h)$ by first contracting to the
  infinite $g$ orbit. So we see that the permutations induced by $g$ and $h$ on
  $X$ indeed form a dendroid set of permutations. The other two criteria are
  also easily checked: $g$ appears as a restriction only for $\tau(b,*)$ and
  $h$ appreas as a restriction only for $\tau(a,*)$. Now $*$ is on a $2$-orbit
  for $h$ and a $1$-orbit for $g$, so the third criterion is also satisfied.
  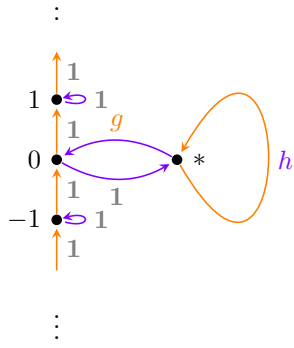
\begin{figure}
\begin{tikzpicture}[->,>=stealth,shorten >=1pt,auto,node distance=2cm,semithick,scale=0.8]
\tikzstyle{point}=[circle,fill,inner sep=0pt, minimum size=4pt]
\node[point,label=right:$*$] (t) at (1,0) {};
\node[label=below:$\vdots$] (A) at (-1,-2) {};
\node[point,label=left:$-1$] (B) at (-1,-1) {};
\node[point,label=left:$0$] (C) at (-1,0) {};
\node[point,label=left:$1$] (D) at (-1,1) {};
\node[label=above:$\vdots$] (E) at (-1,2) {};
\path[orangehex]
      (A) edge node [right] {$\gId$} (B)
      (B) edge node [right]{$\gId$} (C)
      (C) edge node [right]{$\gId$} (D)
      (D) edge node [right]{$\gId$} (E);
\tikzstyle{every loop}=[<-]      
\path[lilahex]
      (B) edge [loop right] node {$\gId$} (B)
      (D) edge [loop right] node {$\gId$} (D)
      (C) edge [bend right] node [below] {$\gId$} (t)
      (t) edge [bend right] node [above] {$\textcolor{orangehex}{g}$} (C);
\draw[orangehex] (t) .. controls +(300:4cm) and +(60:4cm) .. node[right] (bs) {$\textcolor{lilahex}{h}$}  (t);
\end{tikzpicture}
\caption{Dual Moore diagram for Example~\ref{exa:dendroid}}
    \label{fig:DualMoore}
  \end{figure}
  \label{exa:dendroid}
\end{example}
\begin{lemma}
  Let $A$ be a finite set, $X$ be a countably infinite set. Let $\tau A_+ \times X \rightarrow X \times A_+$ be a dendroid automaton. 
  Then for every $a \in A$, $a$ acts on $X^*$ by Lemma~\ref{lem:biset2self-similar} as a finite state automorphism of bounded activity.
  \label{lem:boundedactivity}
\end{lemma}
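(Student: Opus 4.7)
The plan is to reduce both assertions to a structural analysis of the ``restriction graph'' on the finite set $A$. By Lemma~\ref{lem:automatonbiset} and Lemma~\ref{lem:biset2self-similar}, the sections of $\Phi(a)$ transform as $\Phi(a)_{|v} = \Phi(a_{|v})$, where the right-hand restriction is computed by iterating the transition map $\tau$. Starting from an element of $A$, every iterated restriction stays in $A_+$, so the set of sections of $\Phi(a)$ is contained in the finite set $\Phi(A_+)$; this immediately gives the finite-state property and reduces the problem to counting, at each level $n$, the words $v \in X^n$ for which $a_{|v}$ lands in $A$ rather than at $\Id$.

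Next I will introduce the directed graph $G$ on vertex set $A$ with an edge $a \to b$ labelled $x$ whenever $a_{|x} = b \in A$. The second axiom of a dendroid automaton asserts that for every $b \in A$ there is a \emph{unique} pair $(a, x)$ with $a_{|x} = b$, which translates into the statement that every vertex of $G$ has in-degree exactly one. In particular there is a well-defined predecessor function $\pi \colon A \to A$ sending $b$ to this unique $a$. A word $v = x_1 \cdots x_n$ with $a_{|v}$ non-trivial is exactly the label sequence of a length-$n$ directed path in $G$ starting at $a$, and by the uniqueness of incoming edges such a path is determined by its underlying vertex sequence.

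The counting step is then immediate: since every vertex has a unique predecessor, any length-$n$ path in $G$ is recovered from its endpoint $b$ via the backward trace $b, \pi(b), \pi^2(b), \ldots$, so a path from $a$ to $b$ of length $n$ exists if and only if $\pi^n(b) = a$, and is then unique. Hence the number of non-trivial sections of $\Phi(a)$ at level $n$ equals $|(\pi^n)^{-1}(a)| \leq |A|$, a bound independent of $n$, establishing bounded activity. I do not anticipate a serious obstacle; the argument uses only the uniqueness axiom, and the only subtlety worth spelling out is the determinism statement that edge labels in $G$ are forced by the vertex sequence.
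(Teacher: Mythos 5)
Your proposal is correct and follows essentially the same route as the paper: finiteness of $A_+$ gives the finite-state property, and the uniqueness axiom of a dendroid automaton gives, for each level $n$, at most one word with restriction equal to any given $b \in A$, so the number of non-trivial sections at level $n$ is bounded by $|A|$. Your predecessor-function/graph formulation is just a more explicit rendering of the paper's one-line induction on this uniqueness statement.
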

\begin{proof}
  Every section of $a$ is given by an element of $A_+$, so as $A$ is finite, $a$ is a finite state automorphism. For every element in $b \in A$ and $n \in \N$, there is a unique word $v \in X^n$ and $a \in A$ with $a_{|v} = b$. So for every $a \in A$, 
  $\left\{ v \in X^n \colon a_{|v} \text{ non-trivial } \right\}$ is bounded by the cardinality of $A$, so $a$ also acts with bounded actifity.
\end{proof}
\begin{lemma}[Dendroid model of bisets of entire maps]
  Let $f \colon (\C, A, s) \rightarrow (\C, B, t)$ be a marked entire map. Let
  $\Spider = (\gamma_b)_{b \in B}$ be a spider for $B$ such that $f(s)$ and $t$
  are not on any spider leg of $\Spider$. For every $a \in A$, choose a spider
  leg $\gamma'_a$ that is the lift of $\gamma_{f(a)}$ landing at $a$. We call
  the resulting spider $\Spider' = (\gamma_B)_{B \in B}$ a pullback spider of $\Spider$.
  Let $\Phi_A \colon F_A \rightarrow \pi_{1}(\C \setminus A, s)$ and $\Phi_B \colon F_B \rightarrow \pi_{1}(\C \setminus B, t)$ be the
  isomorphisms sending $a$ to $g'_a$ and $b$ to $g_b$, respectively.
  Let $X$ be the set $f^{-1}(b)$.

  Then for every $x \in X, b \in B$ the lift of the generator $g_b$ starting in $x$ intersects the spider $\Spider'$ in at most one leg.

  Let $\tau \colon B_+ \times X \rightarrow X \times A_+$ given as follows: $b(x)$ is the endpoint of lift of the generator $g_b$ starting in $x$, and $b_{|x} = a$ if the lift of the generator $g_b$ starting at $x$ intersects the spider leg $\gamma'_a$. If lift of the generator $g_b$ starting at $x$ does not intersect $\Spider'$, we set $b_{|x} = \Id$.

  Then the following holds:
  \begin{itemize}
    \item $\tau$ is a dendroid automaton.
    \item The biset associated to $\tau$ is $\Phi_B$-$\Phi_A$-congruent to the biset associated to $f$.
  \end{itemize}
  \label{lem:gpullback}
\end{lemma}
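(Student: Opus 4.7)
The plan is to first establish the single geometric observation controlling all three parts: that a lift of a dual generator $g_b$ meets the pullback spider $\Spider'$ transversally in at most one point, lying on at most one leg $\gamma'_a$. Then the three axioms of a dendroid automaton and the biset congruence follow by transporting known facts about Schreier graphs (Lemma~\ref{lem:schreier}), monodromy-induced dendroid permutations (Lemma~\ref{lem:denperm}), and the biset of a marked entire map (Lemma~\ref{lem:bisetmonodromy}) through this picture.

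For the geometric observation, choose $\Rose$ so that each $g_b$ meets $\Spider$ transversally in a single point on $\gamma_b$ (and in no other leg), as in Definition~\ref{def:spider}. Lift $g_b$ starting at $x$: this path lies in $\C \setminus f^{-1}(B)$, and any intersection of $(g_b)^x$ with $\Spider'$ must project to an intersection of $g_b$ with $\Spider$, hence to the single crossing point on $\gamma_b$. Since each leg $\gamma'_a$ is a chosen $f$-lift of some $\gamma_{f(a)}$, and distinct legs $\gamma'_a$ are pairwise disjoint in $\C$, the lift $(g_b)^x$ meets at most one leg $\gamma'_a$, and does so at most once. This proves the first assertion and shows $\tau$ is well defined.

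To verify the dendroid automaton axioms, observe first that the family $(x \mapsto b(x))_{b \in B}$ is exactly the monodromy action of the generators $g_b$ of $\pi_1(\C \setminus B, t)$ on $f^{-1}(t)$, so it is a dendroid set of permutations by Lemma~\ref{lem:denperm}. For the uniqueness axiom: given $a \in A$, any pair $(b,x)$ with $b_{|x} = a$ must have $b = f(a)$ since $\gamma'_a$ maps onto $\gamma_{f(a)}$; and among the finitely many sheets of $f$ around $a$, exactly one starting point $x$ has the property that the lift of $g_b$ crosses $\gamma'_a$ once with the correct orientation at $a$, giving unique $(b,x)$. For the restriction condition along an orbit of $b$, apply Lemma~\ref{lem:schreier}: each cycle in the Schreier graph bounds a face whose closure maps into the dual cell of $b$ and contains at most one preimage $a \in f^{-1}(b)$. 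If this preimage lies in $A$, it is the landing point of a unique leg $\gamma'_a$, crossed by exactly one edge of the cycle, so exactly one restriction along the finite orbit is $a$ and all others are trivial. Infinite orbits correspond to logarithmic tracts or to the face at $\infty$; their closures contain no points of $A$, so every leg $\gamma'_a$ lies outside them and every restriction along the orbit is trivial.

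For the final biset congruence, fix a connecting arc $\sigma$ from $s$ to $t$ in $\C \setminus (A \cup \Spider')$, and for each $x \in X = f^{-1}(t)$ let $\delta_x$ be its $f$-lift starting at $s$; the homotopy classes $[\delta_x]$ form a basis of $\M_f$ compatible with the identification $\M_f / \pi_1(\C\setminus A,s) \cong f^{-1}(t)$ from Lemma~\ref{lem:bisetmonodromy}. Now compute: for $b \in B$, the left action of $\Phi_B(b) = g_b$ on $[\delta_x]$ produces the class of $(g_b)^x \cdot \delta_x$, which equals $[\delta_{b(x)}] \cdot h$ where $h$ is represented by the loop obtained by traversing $\delta_{b(x)}^{-1}$, then $(g_b)^x$, then $\delta_x$ in $\C\setminus A$. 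By the crossing analysis above this loop is trivial if $(g_b)^x$ avoids $\Spider'$, and is homotopic to the dual generator $\Phi_A(a) = g'_a$ if the crossing occurs on $\gamma'_a$. Comparing with the biset produced by $\tau$ via Lemma~\ref{lem:automatonbiset} gives a $\Phi_B$-$\Phi_A$-congruence on basis elements, which extends uniquely to the full biset.

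The main obstacle is the orientation bookkeeping in the two middle steps: ensuring that the algebraic-intersection convention used to orient $g_b$ and $g'_a$ in Definition~\ref{def:spider} makes the crossing identification between $(g_b)^x$ and $\gamma'_a$ correspond to the generator $g'_a$ (not its inverse), and that the "at most one restriction per finite orbit" statement matches the uniqueness in the second axiom on the nose. Once the orientation convention is chosen consistently, all three parts reduce to the Schreier-graph face analysis of Lemma~\ref{lem:schreier}.
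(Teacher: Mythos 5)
Your overall route is the paper's: establish the single-crossing property of lifted generators against the pullback spider, quote Lemma~\ref{lem:denperm} for the dendroid set of permutations, use the face/orbit classification of Lemma~\ref{lem:schreier} for the restriction axioms, and compute the left action on a basis of paths avoiding $\Spider'$ to get the congruence. But three steps are defective as written. First, your basis of $\M_f$ does not parse: an $f$-lift of an arc $\sigma$ from $s$ to $t$ starts at a point of $f^{-1}(s)$, not at $s$, so ``let $\delta_x$ be its $f$-lift starting at $s$'' produces nothing, and it is then unclear why your connecting paths avoid $\Spider'$, which is exactly what your loop computation needs. The paper instead observes that $s$ and all points of $f^{-1}(t)$ lie off $\Spider'$ and that $\C\setminus\Spider'$ is simply connected, so for each $x\in f^{-1}(t)$ there is a (unique up to homotopy) class $p_x$ of a path from $s$ to $x$ disjoint from $\Spider'$; these form a basis by Lemma~\ref{lem:bisetmonodromy}, and the congruence $x\cdot h\mapsto p_x\cdot\Phi_A(h)$ then follows from your crossing analysis. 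Second, your justification of triviality along infinite orbits (``every leg $\gamma'_a$ lies outside them'') is false: after its unique crossing of $\Schreier$ a leg never meets $\Schreier=f^{-1}(\Rose)$ again and tends to $\infty$, so its tail necessarily lies in an unbounded face. The correct reason is that $\gamma'_a$ meets $\Schreier$ in exactly one point (the lift of the unique point of $g_{f(a)}\cap\gamma_{f(a)}$), and that point lies on the edge through which the leg first exits the finite face containing $a$, hence on an edge of the finite $g_{f(a)}$-orbit corresponding to $a$; consequently no edge of an infinite orbit carries a crossing. The same observation is what makes your second-axiom argument precise: the crossing happens at the lift of $g_b\cap\gamma_b$, possibly far from $a$, so ``the finitely many sheets of $f$ around $a$'' is beside the point; uniqueness of $(b,x)$ holds because that single point determines a single edge, i.e.\ a single lift of $g_{f(a)}$.

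Third, the orientation question you defer as ``the main obstacle'' must actually be settled for the congruence to identify the restriction with $g'_a$ rather than its inverse, and it is settled in one line, as in the paper: by the convention of Definition~\ref{def:spider} the intersection of $g_b$ with $\gamma_b$ is positive, and since $f$ is holomorphic it preserves orientation, so the crossing of the lift $g_b^x$ with $\gamma'_a$ is again positive; hence $g_b\cdot p_x$ crosses $\Spider'$ once, positively, through $\gamma'_a$, and is homotopic to $p_{g_b(x)}\cdot g'_a$, matching the biset of $\tau$ from Lemma~\ref{lem:automatonbiset} exactly. With these three repairs your argument coincides with the paper's proof.
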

\begin{proof}
  Since we assume that neither $f(s)$ nor $t$ lie on a spider leg of $\Spider$, it follows that $s$ and every preimage of $t$ do not lie on a spider leg of $\Spider'$. Since $\C \setminus \Spider'$  is simply connected, for every $x\in f^{-1}(t)$ there is a unique homotopy class $p_x$ of a path from $s$ to $x$
  that has a representative that does not intersect $\Spider'$. So the classes $(p_x)_{x \in X}$ form a basis of $\M_f$.

  For $b \in B$, $x \in X$, let us consider $g_b \cdot p_x$. This is the path $p_x$ composed with the lift $g^x_b$. Now $g_b$ intersects $\Spider$ only in one point in the interior of $\gamma_b$. As all legs of $\Spider'$ are lifts of legs in $\Spider$, the lift $g^x_b$ intersects the spider $\Spider'$ in at most one leg. If $g^x_b$ does not intersect a leg of $\Spider'$, then $g_b \cdot p_x$ is a path in $\C \setminus \Spider'$ from $s$ to $g_b(x)$, so it is homotopic to $p_{g_b(x)}$. Otherwise, let $\gamma'_a$ be the spider leg that intersects $g^x_b$. As the intersection of $g_b$ and $\gamma_b$ is positive, so is the intersection of $g^x_b$ and $\gamma'_a$. It follows that $g_b \cdot p_x$ only crosses $\Spider'$ once positively in $\gamma'_a$, so it is homotopic to $p_{g_b(x)} \cdot g'_a$.

 From this we see that restriction behavior of $\M_f$ is the same as the description of $\tau$, so the biset of $\tau$ is indeed $\Phi_B$-$\Phi_A$-congruent
 to the biset associated to $f$, via $x \cdot h \mapsto p_x \cdot \Phi_A(h)$ for $x \in X, h \in F_A$.

 As the action of $\pi_{1}(\C \setminus B, t)$ on  $\M_f/\pi_{1}(\C \setminus A, s)$ is identified with the monodromy action of $\pi_{1}(\C \setminus B, t)$ on $f^{-1}(t)$, we know by Lemma~\ref{lem:denperm} that the family of permutations induced by $B$ on $X$ is dendroid.

 By Lemma~\ref{lem:schreier}, we have an identification of finite $g_b$ orbits with preimages of $b$ under $f$, and an identification of infinite $g_b$ orbits with logarithmic singularities over $b$. We have nontrivial restrictions only along the orbits identified with some $a \in A$, and there exactly once along the edge that intersects $\gamma'_a$. This shows that $\tau$ is has the correct restriction behaviour.
\end{proof}
\begin{example}
  We work through this construction for our example $f(z) = (1-z) \exp(z)$ with $A = {0,1}$. We already know that $\Sing(f) = A$, and as $f(0) = 1, f(1) = 0$, we have that $A$ is forward invariant, in fact $A$ is the post-singular set of $f$. $f$ is monotonically decreasing on $[0,1]$, let $t$ be the unique fixed point of $f$ in $[0,1]$. For our spider legs $\gamma_0$ and $\gamma_1$ we take legs that remain in $\R$. Then $\gamma'_1$ is the lift of $\gamma_0$ landing at $1$, we see that $\gamma'_1$ is the same as $\gamma_1$ up to parametrization. There are two preimages of $\gamma_1$ landing at $0$. Let $\gamma'_0$ be the preimage that lies in the upper half plane. Note that $\gamma'_0$ is homotopic to $\gamma_0$ relative to $A$. So we have a fixed generating set for both sides of the biset. Consider Figure~\ref{fig:LabelSchreier}. We see that we indeed obtain the same dendroid automaton as in Example~\ref{exa:dendroid}. 
\end{example}
\begin{figure}
\begin{tikzpicture}[->,>=stealth,shorten >=1pt,auto,node distance=2cm,semithick,scale=0.8]
\definecolor{orangehex}{rgb}{1.,0.4980392156862745,0.}
\definecolor{lilahex}{rgb}{0.4980392156862745,0.,1.}
\tikzstyle{point}=[circle,fill,inner sep=0pt, minimum size=4pt]
\node[point,diamond,label=right:$0$] (0) at (0,0) {};
\node[point,label=right:$t$] (t) at (1,0) {};
\node[point,diamond,label=left:$1$] (1) at (2,0) {};
\node[label=below:$\vdots$] (A) at (-1,-2) {};
\node[point] (B) at (-1,-1) {};
\node[point] (C) at (-1,0) {};
\node[point] (D) at (-1,1) {};
\node[label=above:$\vdots$] (E) at (-1,2) {};
\path[orangehex]
      (A) edge node {$\gId$} (B)
      (B) edge node {$\gId$} (C)
      (C) edge node {$\gId$} (D)
      (D) edge node {$\gId$} (E);
\tikzstyle{every loop}=[<-]      
\path[lilahex]
      (B) edge [loop right] node {$\gId$} (B)
      (D) edge [loop right] node {$\gId$} (D)
      (C) edge [bend right] node [below] {$\gId$} (t)
      (t) edge [bend right] node [above] {$\textcolor{orangehex}{g}$} (C);
\draw[orangehex] (t) .. controls +(300:4cm) and +(60:4cm) .. node[above right] (bs) {$\textcolor{lilahex}{h}$}  (t);

\node[point,diamond,label=right:$0$] (0t) at (10,0) {};
\node[point,label=right:$t$] (tt) at (11,0) {};
\node[point,diamond,label=left:$1$] (1t) at (12,0) {};
\draw[orangehex] (tt) .. controls +(120:4cm) and +(240:4cm) .. node[above left] (at) {$g$}  (tt);
\draw[lilahex] (tt) .. controls +(300:4cm) and +(60:4cm) .. node[above right] {$h$}  (tt);
\draw[arrows=-,style=curve to,out=45,in=180](0) edge node [above] {$\gamma'_0$} (4,2);
\draw[arrows=-](1) -- node [below] {$\gamma'_1$} (4,0);
\path (5,0) edge node {$f$} (8,0);
\draw[arrows=-] (0t) -- node{$\gamma_0$} (8,0);
\draw[arrows=-](1t) -- node [below] {$\gamma_1$} (14,0);
\end{tikzpicture}
    \caption{Labeled Schreier Graph for $(1-z)\exp(z)$.}
    \label{fig:LabelSchreier}
  \end{figure}
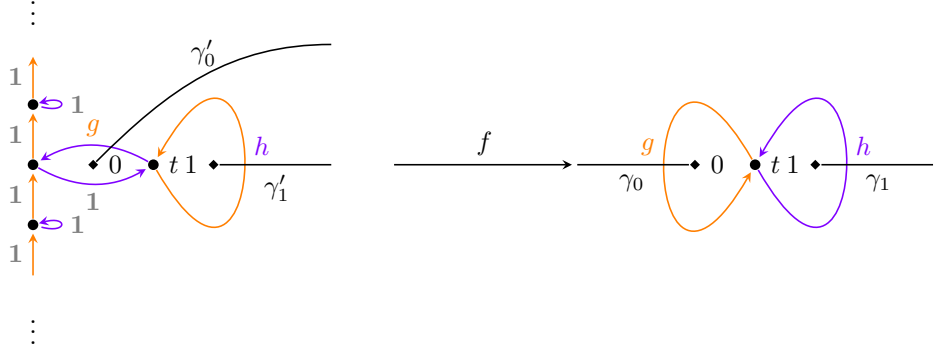
  We can use the pullback of spiders to give a better understanding of the monodromy group of the composition of two marked entire functions.
  We use the notion of product automata:
  \begin{definition}
    Let $\tau_1 \colon C_+ \times X \rightarrow X \times B_+$ and $\tau_2 \colon B_+ \times Y \rightarrow Y \times A_+$ be two automata.
    The \emph{product automaton} $\tau_1 \otimes \tau_2$ has input state set $C$, output state set $A$ and alphabet $X \times Y$ and transition function
    \begin{equation*}
      (c, x, y) \mapsto \left( c(x), c_{|x}(y), (c_{|x})_{|y} \right)
    \end{equation*} for $c \in C_+, x \in X, y\in Y$.
    \label{def:productautomaton}
  \end{definition}
  We note that the product of two group automata is again a group automaton, and it is straightforward to check that the associated biset of a product automaton is isomorphic to the tensor product of the associated bisets of the two group automata.
\begin{lemma}
  Let $f \colon (\C, A, s) \rightarrow (\C, B, t)$ and $g \colon (\C, B, t) \rightarrow (\C, C, u)$ be marked entire maps.
  Let $\Spider = (\gamma_c)_{c \in C}$ a spider such that neither $g(f(s)), g(t)$ nor $u$ lie on $\Spider$.
  Let $\Spider' = (\gamma'_b)_{b \in b}$ be a pullback spider of $\Spider$ under $g$ as in Lemma~\ref{lem:gpullback},
  with resulting automaton $\tau_1 \colon C_+ \times X \rightarrow X \times B_+$.
  Let $\Spider''= (\gamma''_a)_{a \in A}$ be a pullback spider of $\Spider'$ under $f$ as in Lemma~\ref{lem:gpullback}.
  with resulting automaton $\tau_2 \colon B_+ \times Y \rightarrow Y \times A_+$.
  Then the biset of $g \circ f$ is isomorphic to the biset of the product automaton $\tau_1 \otimes \tau_2$.

  Moreover, if the monodromy group of $g$ is $P \subset \Sym(g^{-1}(u))$, and the monodromy group of $f$ is $Q \subset \Sym(f^{-1}(t))$,
  then the monodromy group of $g \circ f$ is isomorphic to a subgroup of the restricted wreath product $\wwr{Q}{P}{X}$.
  In particular, if the monodromy groups of $f$ and $g$ are (elementary) amenable, so is the monodromy group of $g \circ f$.
  \label{lem:spidercomp}
\end{lemma}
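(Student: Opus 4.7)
The plan is to assemble the three assertions from the machinery already developed. First, Lemma~\ref{lem:bisetcomp} identifies the biset of $g \circ f$ with the tensor product $\M_g \otimes \M_f$, taken over $\pi_1(\C \setminus B, t)$. Applying Lemma~\ref{lem:gpullback} to $g$ with the spider $\Spider$, I obtain a $\Phi_C$-$\Phi_B$-congruence between $\M_g$ and the biset associated to $\tau_1$, where $\Phi_B \colon F_B \to \pi_1(\C \setminus B, t)$ is the isomorphism sending each $b$ to the generator of $\pi_1$ dual to the corresponding leg of $\Spider'$. Applying Lemma~\ref{lem:gpullback} a second time, to $f$ with the spider $\Spider'$, I obtain a $\Phi_B$-$\Phi_A$-congruence between $\M_f$ and the biset of $\tau_2$, using the \emph{same} $\Phi_B$. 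Since the middle-group identifications match, these two congruences splice together into a $\Phi_C$-$\Phi_A$-congruence between $\M_g \otimes \M_f$ and the tensor product of the bisets of $\tau_1$ and $\tau_2$ over $F_B$. The final step of the first assertion is the remark following Definition~\ref{def:productautomaton}: the biset of the product automaton $\tau_1 \otimes \tau_2$ is canonically isomorphic to this tensor product via the basis $X \times Y$.

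For the wreath-product embedding, I unpack the first-level action of a generator $c \in C$ on the alphabet $X \times Y$. By the definition of the product automaton, $c \cdot (x, y) = (c(x), c_{|x}(y))$, so the first coordinate realises the monodromy action of $g$, which by Lemma~\ref{lem:bisetmonodromy} factors through $P \subset \Sym(X)$; while the second coordinate attaches to each $x$ an element of $\Sym(Y)$ obtained by letting the section $c_{|x} \in F_B$ act on $Y$ through $\tau_2$, which lies in $Q$. The dendroid hypothesis for $\tau_1$ guarantees that for every fixed $c \in C$ the set of $x \in X$ with $c_{|x} \neq \Id$ is in bijection with a subset of $B$, hence finite. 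Therefore the tuple $(c_{|x})_{x \in X}$ has finite support in $Q^X$, and $c$ induces an element of the restricted wreath product $\wwr{Q}{P}{X}$. Passing to the group generated by the images of the $c \in C$ yields the claimed embedding of the monodromy group of $g \circ f$.

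The amenability statement then follows because restricted wreath products preserve both amenability and elementary amenability, and these properties pass to subgroups. The main obstacle I anticipate is careful bookkeeping: I must verify that the two applications of Lemma~\ref{lem:gpullback} yield literally the same $\Phi_B$, so that the tensor product of the two congruences is well defined, and I must ensure that the finiteness forced by the dendroid condition on $\tau_1$ genuinely yields finite support in $Q^X$ at the first level, rather than the weaker notion of bounded activity at deeper levels of the tree.
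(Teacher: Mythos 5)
Your proposal is correct and follows essentially the same route as the paper: the first assertion is obtained by combining Lemma~\ref{lem:bisetcomp}, two applications of Lemma~\ref{lem:gpullback} (your check that the two middle identifications $\Phi_B$ coincide, both dual to $\Spider'$, is exactly the bookkeeping the paper leaves implicit), and the product-automaton remark, while the embedding into $\wwr{Q}{P}{X}$ uses the finite-support property of the sections of $\tau_1$, which you justify via the dendroid condition and the paper justifies ``by construction'' -- the same fact. No gaps.
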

\begin{proof}
  The first part is  a direct consequence from Lemma~\ref{lem:gpullback} and Lemma~\ref{lem:bisetcomp}. For the second part, it is clear that
  the monodromy group is isomorphic to a subgroup of the unrestricted wreath product. By construction of the automaton $\tau_1$, we see that every state has only finitely many letters in $X$ where it restricts nontrivially. From this we see that we actually have to be in the restricted wreath product.
  As the restricted wreath product preservers (elementary) amenability, we get the claim about (elementary) amenability as well.
\end{proof}
We see that together with Corollary~\ref{cor:monodromystructurally}, finite compositions of structurally finite entire transcendental functions have elementary amenable monodrony groups.
\begin{corollary}
  Let $f$ be a map in the Speiser class. For every $n \geq 1$, the monodromy group of $f$ is amenable if and only if the monodromy group of $f^n$ is amenable.
  \label{cor:amenableiterate}
\end{corollary}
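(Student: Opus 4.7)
The plan is to prove the two implications separately, both by direct appeal to Lemma~\ref{lem:spidercomp}.

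For the forward direction, I would argue by induction on $n$, with $n = 1$ being trivial. Because $f \in \Speiser$, the set $\Sing(f^k)$ is finite for every $k$, so we may choose finite marked sets $A_0, A_1, A_2 \subset \C$ and base points $s_0, s_1, s_2$ making $f \colon (\C, A_0, s_0) \to (\C, A_1, s_1)$ and $f^{n-1} \colon (\C, A_1, s_1) \to (\C, A_2, s_2)$ into marked entire maps with $f^n = f^{n-1} \circ f$. Applying Lemma~\ref{lem:spidercomp} to this factorization identifies the monodromy group of $f^n$ with a subgroup of the restricted wreath product $\wwr{Q}{P}{X}$, where $Q$ is the monodromy group of $f$, $P$ is the monodromy group of $f^{n-1}$, and $X = (f^{n-1})^{-1}(s_2)$. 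By hypothesis $Q$ is amenable, and by induction $P$ is amenable; since the class of amenable groups is closed under restricted wreath products with countable base and under passage to subgroups, the monodromy group of $f^n$ is amenable.

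For the backward direction, I would exhibit the monodromy group of $f$ as a quotient of the monodromy group of $f^n$. Choose a finite $A \supset \Sing(f^n) \cup \Sing(f)$ and a base point $s \in \C \setminus A$, and consider the map $\phi \colon f^{-n}(s) \to f^{-1}(s)$ defined by $z \mapsto f^{n-1}(z)$. This map is $\pi_1(\C \setminus A, s)$-equivariant for the monodromy actions: given $g \in \pi_1(\C \setminus A, s)$ and $z \in f^{-n}(s)$, if $\tilde g$ denotes the lift of $g$ under $f^n$ starting at $z$, then $f^{n-1} \circ \tilde g$ is a lift of $g$ under $f$ starting at $\phi(z)$, with endpoint $\phi(g \cdot z)$. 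Consequently the monodromy action on $f^{-n}(s)$ descends via $\phi$ to the monodromy action on $f^{-1}(s)$, yielding a surjective group homomorphism from the monodromy group of $f^n$ onto the monodromy group of $f$. Since amenability is preserved under quotients, this direction follows.

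No serious obstacle arises. The one point requiring attention is the consistent choice of finite marked sets so that $f$ and $f^{n-1}$ compose as marked maps, which is possible precisely because $f$ lies in $\Speiser$ and we iterate only finitely many times; no assumption of post-singular finiteness is needed. Beyond this, the argument invokes only standard closure properties of the class of amenable groups.
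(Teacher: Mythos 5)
Your argument is correct and is essentially the paper's own proof: the forward direction is an inductive application of Lemma~\ref{lem:spidercomp} to the factorization $f^n = f^{n-1}\circ f$, and the backward direction exhibits the monodromy group of $f$ as a quotient of that of $f^n$, exactly as the paper does. The only step you leave tacit (as does the paper) is that the equivariant projection $z\mapsto f^{n-1}(z)$ from $f^{-n}(s)$ to $f^{-1}(s)$ is surjective, so that triviality of the action upstairs really forces triviality downstairs; this holds because a value of $f^{-1}(s)$ omitted by $f^{n-1}$ would be a singular value of $f^{n-1}$, whence $s\in f(\Sing(f^{n-1}))\subset\Sing(f^{n})\subset A$, contradicting $s\notin A$.
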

\begin{proof}
  If the monodromy group of $f$ is amenable, then an inductive application of the previous lemma shows that the monodromy group of $f^n$ is amenable.
  In the other direction, the monodromy group of $f$ is a quotient of the monodromy group of $f^n$. As amenability is preserved by taking quotients, we obtain the result.
\end{proof}
\section{Periodic spiders}
Now we actually do dynamics with entire functions. First recall standard definitions from holomorphic dynamics. See \cite{MilnorBook} for an introduction.
\begin{definition}
  Let $f \colon \C \rightarrow \C$ be an entire function. The \emph{Fatou set} $F(f)$ is the set of normality of $f$, it is the largest open subset in $\C$ such that the family of iterates ${f, f^2, \dots}$ forms a normal family. The \emph{Julia set} $J(f)$ is the complement of $F(f)$.

  A periodic point $w$ of $f$ of period $n$ is called \emph{superattracting} if $(f^n)'(w) = 0$, i.e., if a critical point lies on the forward orbit of $w$. A periodic point is called \emph{repelling} if $|(f^n)'(w)| > 1$. Superattracting periodic points are always in the Fatou set, repelling periodic points are in the Julia set.

  The \emph{post-singular set} $\PP(f)$ is the forward orbit of the singular set $\S(f)$, i.e., $\PP(f) = \bigcup_{n \geq 0} f^{n}(\S(f))$. A entire function is \emph{post-singularly finite} if $\PP(f)$ is finite.
  \label{def:julia}
\end{definition}
We will use the following dynamical facts about post-singularly finite entire functions. See for example \cite{schleicher2010dynamics} for a general overview
of dynamics of entire functions, and \cite[Section 2]{PfrangThesis} for dynamics of post-singularly finite entire functions.
\begin{lemma}[Böttcher coordinates]
  Let $f$ be a post-singularly finite transcendental entire function. Then every periodic point is either superattracting or repelling. 
  Let $a$ be a periodic point in $F(f) \cap \PP(f)$ of period $k$. Let $U$ be the component of the Fatou set containing $a$.
  Then there is a conformal map $\Phi \colon U \rightarrow \D$ and an $n > 1$ such that the following diagram commutes.
  \begin{tikzcd}[ampersand replacement=\&]
    U \ar[r,"\Phi"] \ar[d,"f^k"] \& \D \ar[d,"z^n"] \\
    U \ar[r,"\Phi"] \& \D 
  \end{tikzcd}
  (Here $z^n$ is shorthand for the map $z \mapsto z^n$ on $\D$). 
\end{lemma}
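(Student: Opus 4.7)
The proof splits into two independent parts, which I would tackle sequentially.

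\emph{Classification of periodic cycles.} First I would argue by elimination using the standard classification of periodic Fatou components for transcendental entire functions (see \cite{schleicher2010dynamics}): aside from superattracting and repelling, a periodic cycle could a priori be attracting (with nonzero multiplier), parabolic, Siegel, or Cremer. Herman rings do not occur for entire maps, and Baker domains contain no periodic points. I would rule out each of the intermediate cases by contradicting finiteness of $\PP(f)$. The classical Fatou--Leau trapping theorem supplies a singular value whose forward orbit converges to the cycle in both the attracting and parabolic cases; since the multiplier is nonzero in the attracting case and the cycle never absorbs parabolically, the orbit is infinite. In the Siegel and Cremer cases, the boundary of the Siegel disk (respectively the Cremer point itself) lies in the $\omega$-limit set of some singular orbit, again forcing $\PP(f)$ to be infinite.

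\emph{Böttcher coordinates.} Set $g := f^k$, so that $a$ is a superattracting fixed point of $g$ of some local degree $n \geq 2$, with local expansion $g(z) = a + c(z-a)^n + O((z-a)^{n+1})$, $c \neq 0$. The classical local Böttcher theorem (see \cite[Section~9]{MilnorBook}) provides a conformal conjugacy $\Phi_0 \colon V \rightarrow \Phi_0(V) \subset \D$ on some neighborhood $V$ of $a$ satisfying $\Phi_0(g(z)) = \Phi_0(z)^n$. I would globalize $\Phi_0$ to all of $U$ by the functional equation: for $z \in U$, choose $m$ large enough that $g^m(z) \in V$ and set $\Phi(z) := \Phi_0(g^m(z))^{1/n^m}$, extracting the $n^m$-th root consistently via analytic continuation along the orbit $z, g(z), \dots, g^m(z)$.

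The main obstacle is verifying that $\Phi$ extends to a \emph{conformal isomorphism} $U \to \D$, not merely to a branched covering of some degree exceeding $n$. This requires two ingredients. First, $U$ must be simply connected; for post-singularly finite entire functions this follows from general structure results on periodic Fatou components (see \cite[Section~2]{PfrangThesis}). Second, $a$ must be the unique critical point of $g$ in $U$, which is what makes the iterated root extractions in the functional equation unobstructed and makes the resulting $\Phi$ injective. Any critical point of $f$ in $U$ lies in $\PP(f) \cap F(f)$ and has forward orbit converging to the periodic cycle, so by post-singular finiteness it must eventually land on $a$; the rigidity of Fatou components of post-singularly finite entire functions in the Speiser class then rules out extra critical orbits residing in the same component $U$. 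Once these two facts are in hand, $\Phi$ is a proper holomorphic map from a simply connected domain onto $\D$, branched only above $0 \in \D$, and hence a conformal isomorphism realizing the required conjugacy.
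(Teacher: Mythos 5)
The paper disposes of this lemma by citation — the superattracting/repelling dichotomy is \cite[Corollary~2.13]{PfrangThesis} and the global Böttcher coordinate is \cite[Proposition~2.34]{PfrangThesis}, with \cite[Chapter~9]{MilnorBook} as background — so you are supplying an argument the paper deliberately does not spell out. Your first part (eliminating geometrically attracting, parabolic, Siegel and Cremer cycles by producing either an infinite singular orbit or an infinite set inside the closure of $\PP(f)$) is the standard argument and is acceptable as a sketch; the attracting case does need the small extra step that the relevant singular value can never land exactly on the cycle (via the maximal linearization domain), which you gloss over but which is routine.

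The genuine gap is in the second part. What the lemma really requires is that the return map $g = f^k|_U \colon U \to U$ is proper of finite degree $n$ and branched only at $a$, and this is exactly where the transcendental difficulties sit; your proposal does not address them. First, the sentence ``any critical point of $f$ in $U$ lies in $\PP(f)\cap F(f)$'' is false — critical points need not be post-singular, only their values are — and in any case the relevant objects are the singular values of $g|_U$, which include asymptotic values. Second, you never exclude the genuinely transcendental possibilities that an asymptotic value of $f^k$ lies in $U\setminus\{a\}$ or that $g|_U$ has infinite degree (a logarithmic singularity over a point of $U$); without this, properness fails and the model $z\mapsto z^n$ need not exist. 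Third, ruling out a second singular value $v\in \PP(f)\cap U$, $v\neq a$, is not achieved by observing that $v$ eventually maps to $a$ (that alone contradicts nothing), nor by appealing to unspecified ``rigidity of Fatou components''; one needs an actual argument, e.g.: if the Böttcher conjugacy extends only to a maximal domain $U_{r^*}\subsetneq U$, then some singular value $v$ of $g$ on $\partial U_{r^*}\subset U$ has forward images $g^m(v)$ inside $U_{r^*}$ with $|\Phi(g^m(v))| = (r^*)^{n^m}\neq 0$, so its orbit converges to $a$ without ever reaching it, contradicting finiteness of $\PP(f)$. (Simple connectivity of $U$ also has a concrete source: multiply connected Fatou components of entire functions are wandering, so periodic ones are simply connected.) Finally, your closing inference ``a proper holomorphic map from a simply connected domain onto $\D$ branched only above $0$ is a conformal isomorphism'' is false as stated — $z\mapsto z^d$ is such a map; injectivity of $\Phi$ must instead come from the fact that $a$ is the unique $g$-preimage of $a$ in $U$ and that $\Phi$ is unbranched at $a$. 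These missing points are precisely the content of the cited \cite[Proposition~2.34]{PfrangThesis}.
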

\begin{proof}
  For the first claim see \cite[Corollary~2.13]{PfrangThesis}. For a general reference for Böttcher coordinates see \cite[Chapter~9]{MilnorBook}, in the case of post-singularly finite transcendental entire functions, see for example \cite[Proposition~2.34]{PfrangThesis}.
\end{proof}
\begin{definition}[Internal rays]
  In the notation of the previous lemma, for $\theta \in \R/\Z$ the preimage of the radius $R_\theta \coloneqq \left\{ r e^{2\pi i\theta} \colon r \in [0,1) \right\}$ under $\Phi$ is called the \emph{internal ray} $r_\theta$ in $U$ of angle $\theta$.
  \label{def:int_rays}
\end{definition}
\begin{lemma}[Periodic internal rays land]
  Every periodic ray lands at a repelling periodic point of $f$. For a given periodic Fatou component, different periodic rays land at different periodic points.
  \label{lem:internalraysland}
\end{lemma}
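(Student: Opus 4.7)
The plan is to adapt the classical Douady--Hubbard--Milnor landing argument for periodic external rays of polynomials to the transcendental post-singularly finite setting, with the hyperbolic metric on $\hat\C\setminus(\PP(f)\cup\{\infty\})$ replacing the polynomial escape metric as the source of expansion.

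\emph{Landing and the repelling property.} Let $r_\theta$ be a periodic internal ray in a periodic Fatou component $U$ of period $k$, with superattracting center $a\in U$ and local degree $n>1$ of $f^k$ at $a$. Pick $m\ge 1$ with $n^m\theta\equiv\theta\pmod1$ and set $g\coloneqq f^{km}$, so that in Böttcher coordinates $g|_U$ is conjugate to $z\mapsto z^{n^m}$ on $\D$ and $g$ sends $r_\theta$ onto itself via $r\mapsto r^{n^m}$ on the radial parametrization. Let $K\subset\partial U\subset J(f)$ be the accumulation set of $r_\theta$ as the Böttcher radius tends to $1$; $K$ is a nonempty, compact, connected, $g$-invariant subset of $J(f)$. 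Since $\PP(f)$ is finite, $V\coloneqq\hat\C\setminus(\PP(f)\cup\{\infty\})$ carries a complete hyperbolic metric. By Lemma~\ref{lem:covprop}, $f$ restricts to an unbranched covering $\hat\C\setminus f^{-1}(\PP(f)\cup\{\infty\})\to V$; since $\PP(f)\cup\{\infty\}\subset f^{-1}(\PP(f)\cup\{\infty\})$, the source is strictly smaller than $V$, and Schwarz--Pick yields that every branch of $f^{-1}$, hence of $g^{-1}$, is a strict hyperbolic contraction $V\to V$. Truncating $r_\theta$ away from $a$ so that the tail lies in $V$ and iterating an appropriate branch of $g^{-1}$ along the truncated tail forces the hyperbolic diameter of $K$ to be zero, so $K=\{p\}$ and $r_\theta$ lands at $p\in J(f)$. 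Since $p$ is periodic under $f$ and the previous lemma classifies every periodic point as either superattracting (and hence in $F(f)$) or repelling, $p$ must be repelling.

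\emph{Distinct landings.} Suppose, for contradiction, that two distinct periodic rays $r_{\theta_1},r_{\theta_2}\subset U$ land at a common point $p$. Passing to a common iterate, assume both are $g$-invariant and $p$ is a repelling fixed point of $g$. The union $\gamma\coloneqq\{a\}\cup r_{\theta_1}\cup\{p\}\cup r_{\theta_2}$ is a $g$-invariant Jordan curve in $\overline U$ bounding two Jordan domains $W_1,W_2\subset\hat\C$. One of them, say $W_1$, has closure meeting $\PP(f)$ only in $\{a,p\}$; after replacing $g$ by $g^2$ if needed we have $g(W_1)=W_1$. An appropriate branch of $g^{-1}$ then maps $\overline{W_1}$ into itself fixing $a$ and $p$, and it is a strict hyperbolic contraction on $W_1\cap V$; iterating this branch collapses $\overline{W_1}$ to $\{a,p\}$, contradicting the fact that $W_1$ is a nondegenerate Jordan domain of positive hyperbolic diameter.

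The main obstacle in both steps is the interaction between the geometric objects (the ray, the Jordan curve) and $\PP(f)$: both meet $\PP(f)$ at their marked endpoints, where the hyperbolic metric on $V$ is singular, so the contraction arguments must proceed by truncating small neighborhoods of $\PP(f)$ and passing to limits. One also has to verify that the chosen branches of $g^{-1}$ are well-defined on the relevant regions --- which relies on those regions avoiding $\PP(f)$ in their interior --- and that the side $W_1$ in the injectivity step can indeed be chosen with $\overline{W_1}\cap\PP(f)\subseteq\{a,p\}$, which follows because $\PP(f)$ is finite and, after possibly shrinking to a smaller $g$-invariant subregion, $W_1$ can be routed to avoid the finitely many extraneous post-singular points.
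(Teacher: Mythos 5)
Your proposal treats this lemma as a routine transplant of the polynomial hyperbolic-contraction argument, but that is exactly what the transcendental setting does not allow, and the paper itself does not attempt it: its ``proof'' is a citation to \cite{PfrangThesis} (Propositions~2.37 and~2.41), whose landing machinery (going back to \cite{mihaljevic2010landing} and the dreadlock technology of \cite{BeniniRempeDreadlocks}) is substantial. In your first step, Schwarz--Pick only gives \emph{pointwise} strict contraction of inverse branches on $V=\C\setminus\PP(f)$; without uniform contraction along the ray tail this does not force the accumulation set $K$ to degenerate, and the needed uniformity is precisely what can fail here: the superattracting Fatou component of a transcendental entire map may be unbounded, so the internal ray may be unbounded and $K$ is a priori a compact connected subset of $\hat\C$ that may contain $\infty$ (and points of $\PP(f)$), where the hyperbolic metric of $V$ is not available and expansion estimates degenerate. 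Saying one ``truncates small neighborhoods and passes to limits'' names the difficulty rather than resolving it; controlling the ray near infinity is the actual content of the cited landing theorems.

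The second step has more serious structural problems. First, there is no reason why one of the two Jordan domains $W_1,W_2$ bounded by $\gamma=r_{\theta_1}\cup r_{\theta_2}\cup\{a,p\}$ should meet $\PP(f)$ only in $\{a,p\}$; both sides can contain post-singular points, and you cannot ``route'' $W_1$ to avoid them --- it is determined by the rays. Second, $g(W_1)=W_1$ is not automatic: $g(\gamma)=\gamma$ only forces $\partial g(W_1)\subset\gamma$, and $g(W_1)$ may well overflow into $W_2$; in fact the correct argument shows (via a prime-end/F.\ and M.\ Riesz argument) that each side between the two rays must contain points of $J(f)$, whence its forward images blow up --- this blow-up, which your proof never uses, is where the genuine contradiction lives. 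Third, even granting an inverse branch $h$ of $g$ with $h(\overline{W_1})\subset\overline{W_1}$ fixing $a$ and $p$: such a branch fixes the two invariant rays, so every image $h^j(\overline{W_1})$ contains the whole curve $\gamma$ and can never ``collapse to $\{a,p\}$''; moreover the hyperbolic contraction of $h$ degenerates at the punctures $a,p$, so hyperbolic diameters need not tend to zero. So the asserted contradiction does not materialize, and the injectivity statement requires a different mechanism (sector/blow-up or the arguments of \cite{PfrangThesis}) than the one you propose.
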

\begin{proof}
  See the discussion up to \cite[Proposition~2.37]{PfrangThesis} for the first statement, and \cite[Proposition~2.41]{PfrangThesis} for the second.
\end{proof}
\begin{remark}
  In the course of the proof of our main theorem it will convenient to pass to
  iterates of $f$. For a finite set of periodic points of $f$ and periodic
  legs, let $m$ be the least common multiple of all periods. Then $f^m$ fixes all
  the periodic points and periodic legs.
  Also, for a finite set of preperiodic points, choosing $m$ highly divisible enough makes sure, that point is mapped to a fixed point after one iteration.
  As we have seen in Corollary~\ref{cor:amenableiterate}, 
  the monodromy group of $f$ is amenable if and only if the monodromy group of $f^m$ is amenable, and by Lemma~\ref{lem:imgiter} that the iterated monodromy groups
  of $f$ and $f^m$ are naturally isomorphic, so there is no loss in our theorems in doing this step. 
  \label{rem:periodic}
\end{remark}
We import \cite{mihaljevic2010landing} here:
\begin{lemma}
  Let $f$ be a post-singularly finite entire transcendental function. Let $A
  \subset \C$ be a finite forward invariant set that properly contains
  $\PP(f)$.  Let $z_0$ be a fixed point in $A \cap J(f)$. Let $\gamma$ be a leg,
  i.e., an arc from $z_0$
  to $\infty$ meeting $A$ only in $z_0$. Let $\LL$ be the leg pullback map at
  $z_0$. Then there exists $n < m \in \mathbb N$
  with $\LL^n(\gamma)$ homotopic to $\LL^m(\gamma)$ relative to $A$.
  \label{lem:legperiodic}
\end{lemma}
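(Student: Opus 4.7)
The proof plan follows the hyperbolic-contraction strategy of \cite{mihaljevic2010landing}. The first step is to observe that $z_0$ must be a repelling fixed point: by the dichotomy for post-singularly finite transcendental entire functions, every periodic point is either superattracting or repelling, and superattracting periodic points lie in $F(f)$; since $z_0 \in J(f)$, it must be repelling. Consequently, on a small disk $D$ around $z_0$ with $D \cap A = \{z_0\}$, $f$ admits a univalent local inverse $\sigma$ fixing $z_0$ with $|\sigma'(z_0)| = 1/|f'(z_0)| < 1$. The initial segments of $\LL^n(\gamma)$ inside $D$ are the iterates $\sigma^n$ of the initial segment of $\gamma$; in particular their germs at $z_0$ shrink to $z_0$ and converge to a definite tangent direction.

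The second step is to establish a strict hyperbolic contraction globally. Since $A$ is forward invariant, $A \subseteq f^{-1}(A)$; since $f$ is transcendental, every point in $A$ has infinitely many preimages, so $A \subsetneq f^{-1}(A)$. By Lemma~\ref{lem:covprop}, $f$ restricts to an unbranched covering $\C \setminus f^{-1}(A) \to \C \setminus A$ between hyperbolic Riemann surfaces, and the strict inclusion $\C \setminus f^{-1}(A) \hookrightarrow \C \setminus A$ is a strict hyperbolic contraction by the Schwarz--Pick lemma. Thus lifting any compact sub-arc under $f$ strictly shortens its hyperbolic length in $\C \setminus A$.

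The main obstacle is to turn these contractions into a finiteness statement for homotopy classes of legs, since the legs themselves have infinite hyperbolic length. The plan is to fix a small disk $D' \Subset D$ and split each $\LL^n(\gamma)$ into its portion inside $D'$ and its portion outside $D'$. The inside portions shrink to $z_0$ along a definite direction by the first step. The outside portions, after parametrizing them as arcs from $\partial D'$ to $\infty$, are lifts under $f^n$ of compact pieces of $\gamma$; the strict hyperbolic contraction keeps their hyperbolic length in $\C \setminus A$ uniformly bounded. The set of homotopy classes (rel $A \cup \{\infty\}$) of arcs from $\partial D'$ to $\infty$ with hyperbolic length bounded by a given constant is finite, by a Mumford-type compactness argument for bounded-geometry arc systems in the hyperbolic surface $\C \setminus A$ truncated at its cusps. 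Combined with the convergence of tangent directions at $z_0$, this shows that the homotopy classes $[\LL^n(\gamma)]$ relative to $A$ lie in a finite set for all sufficiently large $n$. Pigeonhole then supplies $n < m$ with $\LL^n(\gamma) \simeq \LL^m(\gamma)$ rel $A$, as required.
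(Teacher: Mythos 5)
Your sketch attempts to reprove the cited result rather than invoke it: the paper's own proof is two lines, observing that $U=\C\setminus A$ is an admissible expansion domain in the sense of \cite{mihaljevic2010landing} and quoting \cite[Theorem~3.3]{mihaljevic2010landing}. That is legitimate in principle, but your argument breaks down at exactly the point the cited theorem is designed to handle. The step ``the outside portions, after parametrizing them as arcs from $\partial D'$ to $\infty$, are lifts under $f^n$ of compact pieces of $\gamma$'' is false: since $f^n\circ\LL^n(\gamma)=\gamma$ and $\LL^n(\gamma)$ tends to $\infty$, the part of $\LL^n(\gamma)$ outside $D'$ maps onto a portion of $\gamma$ containing its entire tail at $\infty$, which is not compact in $\C$ and has infinite hyperbolic length in $\C\setminus A$ (the point $\infty$ is a cusp of $\hat\C\setminus(A\cup\{\infty\})$, so every leg has infinite length there). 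Hence the asserted uniform length bound does not follow. Moreover, Schwarz--Pick only gives pointwise strict contraction for the inclusion $\C\setminus f^{-1}(A)\hookrightarrow\C\setminus A$; near any common puncture $a\in A\cap f^{-1}(A)$ the ratio of the two hyperbolic densities tends to $1$, and near the end at $\infty$ the preimage structure is governed by logarithmic tracts, so strict contraction alone yields no bound independent of $n$. Uniform expansion estimates away from $\PP(f)$ together with a separate treatment of the end at $\infty$ are precisely the content of the admissibility hypothesis and of the proof of \cite[Theorem~3.3]{mihaljevic2010landing}; your sketch assumes this where the real work lies.

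A secondary issue is the finiteness claim: since all legs have infinite hyperbolic length, it must be phrased for truncated lengths (cutting the surface at horocyclic neighbourhoods of the cusps at $z_0$, at the other points of $A$, and at $\infty$), and one must then also control how the pullbacks enter the cusp at $\infty$. With such a formulation the pigeonhole conclusion would follow, but as written the central uniform bound feeding into it is unjustified. The economical fix is to do what the paper does: verify that $\C\setminus A$ is an admissible expansion domain (this is where the hypotheses that $A$ is forward invariant and properly contains $\PP(f)$ enter) and apply \cite[Theorem~3.3]{mihaljevic2010landing}; otherwise you need to supply the expansion estimates near $\PP(f)$ and near $\infty$ that your outline currently takes for granted.
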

\begin{proof}
  This is a special case of \cite[Theorem~3.3]{mihaljevic2010landing}. In fact $U \coloneqq \C \setminus A$ is an admissible expansion domain in the sense of \cite[Definition~3.1]{mihaljevic2010landing}, so we can indeed apply the cited result.
\end{proof}
\begin{lemma}
  Let $f$ be a post-singularly finite transcendental entire function. Then there exists a natural number $n\geq 1$, a fixed point $t$ of $f^n$, and a spider $\Spider = (\gamma_a)_{a \in \PP(f)}$, such that $\Spider$ is isotopic to a pullback spider of $\Spider$ under $f^n$ relative to $\PP(f) \cup \left\{ t \right\}$.
  \label{lem:periodicspiders}
\end{lemma}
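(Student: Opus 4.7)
\emph{Strategy.} I would construct the spider leg by leg. For fixed points of $f^N$ in $\PP(f) \cap J(f)$ use Lemma~\ref{lem:legperiodic}; for fixed points in $\PP(f) \cap F(f)$ combine internal Böttcher rays with a second application of Lemma~\ref{lem:legperiodic} at a repelling boundary landing point; for strictly pre-periodic points in $\PP(f)$ simply lift the already-constructed legs. One then passes to a sufficiently high iterate so that every periodic leg becomes fixed under the leg pullback map up to homotopy, and invokes Epstein--Zieschang to realize the resulting homotopy classes by a genuinely disjoint spider.

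\emph{Plan.} First, by Remark~\ref{rem:periodic}, replace $f$ by $f^N$ with $N$ chosen highly divisible so that (i) every periodic point of $f$ in $\PP(f)$ is fixed by $f^N$, (ii) for every strictly pre-periodic $a \in \PP(f)$ the image $f^N(a)$ is a fixed point of $f^N$, and (iii) there is a repelling fixed point $t$ of $f^N$ outside $\PP(f)$ (such $t$ exists because repelling periodic points of $f$ are infinite in number while $\PP(f)$ is finite). Set $Q := \PP(f) \cup \{t\}$, a finite forward-invariant set properly containing $\PP(f)$. For each fixed point $a \in \PP(f) \cap J(f)$, apply Lemma~\ref{lem:legperiodic} with this $Q$ to obtain a leg $\tilde\gamma_a$ at $a$ whose homotopy class is periodic under the leg pullback map $\LL_a$ of $f^N$. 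For each fixed point $a \in \PP(f) \cap F(f)$, which must be superattracting by the Böttcher Lemma, conjugate $f^N$ on the Fatou component $U$ of $a$ to $z \mapsto z^d$ via $\Phi \colon U \to \D$, pick an angle $\theta$ periodic under multiplication by $d$, and take the internal ray $r_\theta$, which by Lemma~\ref{lem:internalraysland} lands at a repelling periodic boundary point $w_a$; extend $r_\theta$ past $w_a$ to $\infty$ by a Julia-side arc obtained from another application of Lemma~\ref{lem:legperiodic} at $w_a$ (with forward-invariant set $Q \cup \{w_a\}$), yielding a concatenated leg $\tilde\gamma_a$ periodic under $\LL_a$. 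Let $p$ be the least common multiple of all the periods obtained, and replace $N$ by $Np$; now each $\tilde\gamma_a$ at a fixed point is literally $\LL_a$-fixed up to homotopy rel $Q$. For each strictly pre-periodic $a \in \PP(f)$, define $\tilde\gamma_a$ to be the lift of $\tilde\gamma_{f^N(a)}$ through $f^N$ landing at $a$; by construction this leg is its own pullback. Finally, apply the Epstein--Zieschang Lemma (Lemma~\ref{lem:epsteinzieschang}) to realize the collection $\{\tilde\gamma_a\}_{a \in \PP(f)}$ as a genuinely disjoint spider $\Spider = (\gamma_a)$ via an ambient isotopy relative to $Q \cup \{\infty\}$; by construction a choice of pullback spider of $\Spider$ under $f^N$ is isotopic to $\Spider$ relative to $Q$.

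\emph{Main obstacle.} The delicate part is the Fatou case in the middle step: we must glue an internal Böttcher ray (naturally well-behaved via the $z \mapsto z^d$ model) to an exterior Julia-side arc whose periodicity is forced independently by a second application of Lemma~\ref{lem:legperiodic} at a repelling landing point on $\partial U$, and then verify that the resulting composite leg is genuinely periodic under the full leg pullback $\LL_a$ — the combinatorics of $d$-multiplication on $\theta$ must be amalgamated with the period supplied by Lemma~\ref{lem:legperiodic} on the outside. Once this is in hand, absorbing all the different periods into a single $N$ and straightening the result to a disjoint spider via Epstein--Zieschang are essentially routine.
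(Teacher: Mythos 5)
Your overall route coincides with the paper's (Lemma~\ref{lem:legperiodic} at repelling fixed points, internal rays landing at repelling boundary points for the Fatou case, lifts at strictly preperiodic points, passage to iterates, Epstein--Zieschang at the end), but two genuine gaps remain. The first is the assembly step. You construct each leg's homotopy class independently (Lemma~\ref{lem:legperiodic} at each $b\in\PP(f)\cap J(f)$ rel $Q$, and at each $w_a$ rel $Q\cup\{w_a\}$) and then invoke Epstein--Zieschang to ``realize the collection as a genuinely disjoint spider''. Lemma~\ref{lem:epsteinzieschang} does not do that: it only promotes a legwise homotopy between two families of arcs that are \emph{already} pairwise disjoint (and pairwise non-homotopic) to an ambient isotopy; it does not produce pairwise disjoint representatives of independently chosen classes. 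Nothing in your construction prevents two of your classes from having essential crossings with each other, with the internal rays, or with the Fatou components, so disjoint realizability --- in particular compatibility with the rays that your composite legs contain --- is exactly what has to be proved. The paper gets it by construction: after making the internal rays literally invariant (passing to an iterate), it chooses one pairwise disjoint family of legs at the repelling points $\{a_i'\}\cup\{b_j\}$ meeting the rays and the marked set only at endpoints, and pulls back this whole family simultaneously; lifts of disjoint arcs landing at distinct points are disjoint, and forward invariance of the rays guarantees the pullbacks still avoid them. That simultaneous pullback, not Epstein--Zieschang, is what makes the final spider disjoint.

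The second gap is the step you yourself flag as the ``main obstacle'' and leave unresolved; it is not routine. The paper's resolution needs three ingredients: (i) the landing point $a_i'$ (your $w_a$) is placed in the marked set, so the homotopy furnished by Lemma~\ref{lem:legperiodic} fixes it and can be concatenated with the constant homotopy on the ray; (ii) the internal ray is made literally fixed (its angle fixed under $z\mapsto z^d$ after passing to a further iterate), so the ray is contained in its own preimage; and (iii) at the superattracting point the local degree exceeds $1$, so several lifts of the composite leg land there (Lemma~\ref{lem:lifting}), and one must select the lift that begins along the invariant ray and continues through the repelling (hence non-critical) point $w_a$ with a lift of the outer leg. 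Only then is the composite leg homotopic rel $\PP(f)\cup\{t\}$ to a leg of an actual pullback spider. A smaller inaccuracy in the same spirit: Lemma~\ref{lem:legperiodic} gives only \emph{eventual} periodicity, so before taking the least common multiple of the periods you must also replace each leg by a sufficiently high pullback $\LL^n(\gamma)$, as the paper does.
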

\begin{proof}
  We will repeatedly use Remark~\ref{rem:periodic}, that is, we will pass to a higher iterate of $f$ to repeatedly pass from periodic to fixed objects.  A transcendental entire
  function has infinitely many periodic points \cite{BergweilerPeriodic}. By
  Remark~\ref{rem:periodic}, we can assume without loss of generality that $f$
  has a fixed point $t$ distinct from $\PP(f)$, every periodic point of
  $\PP(f)$ is fixed by $f$ and every preperiodic point of $\PP(f)$ is mapped to
  a fixed point of $f$. Let $a_1,\dots,a_k$ the periodic points in $\PP(f) \cap
  F(f)$, let $b_1,\dots,b_l$ be the periodic points in $\PP(f) \cap J(f)$, and
  let $c_1,\dots,c_m$ be the set of strictly preperiodic points in $\PP(f)$.

  For every $a_i$, there are infinitely many repelling periodic points on the
  boundary of the Fatou component with center $a_i$ that are connected to $a_i$
  via a periodic internal ray by Lemma~\ref{lem:internalraysland}. So we can choose repelling periodic points $a'_i$ with
  internal rays $r_i$ connecting $a_i$ to $a'_i$ such that all $a'_i$ are
  distinct and disjoint from $\PP(f)$. By passing to an even higher iterate if
  needed, we can assume that the rays $r_i$ are in fact fixed by $f$.

  From now on, let $A \coloneqq \{a'_1,\dots,a'_k,b_1,\dots,b_l\}$ and $B
  \coloneqq \{a'_1,\dots,a'_k\} \cup \PP(f) \cup \{t\}$ and
  $\PP_{\text{per}}(f) \coloneqq \left\{ a_1,\dots,a_k,b_1,\dots,b_l \right\}$.
  Then $B$ is forward invariant and properly contains $\PP(f)$, and $A \subset B$
  consists of repelling periodic points. Since the rays $r_1,\dots, r_k$ are pairwise 
  disjoint arcs, they do not separate the plane. So we can choose
  a collection of disjoint spider legs $(\hat \gamma_a)_{a \in A}$ that meet
  the rays $r_1,\dots, r_k$ and the set $B$ only possibly at the endpoints of
  the spider legs.

  By Lemma~\ref{lem:legperiodic}, the homotopic classes relative to $B$ of the
  pullbacks $\LL^n(\hat \gamma_a)$ are eventually periodic. By Lemma~\ref{lem:epsteinzieschang}, this also means
  that the isotopic classes are eventually periodic.

  Since the rays
  $r_1,\dots, r_k$ are forward invariant under $f$, the pullbacks $\LL^n(\hat
  \gamma_a)$ meet the rays also only possibly at the end points of the spider
  legs.  Also, for fixed $n$, $\LL^n(\hat \gamma_a)$ and $\LL^n(\hat \gamma_b)$
  are disjoint for $a \not= b$.  So by replacing $\hat \gamma_a$ by $\LL^n(\hat
  \gamma_a)$ for $n$ large enough and again passing to a high enough iterate of
  $f$, we can assume that $\hat \gamma_a$ is isotopic to its pullback $\LL(\hat
  \gamma_a)$ relative to $B$.

  We can now define spider legs $(\tilde \gamma_a)_{a \in
  \PP_{\text{per}}(f)}$: for the repelling fixed points $b_1, \dots b_l$, we
  take $\tilde \gamma_{b_j} = \hat \gamma_{b_j}$, for the superattracting
  fixed points, we take $\tilde \gamma_{a_i}$ as the concatenation of
  the internal ray $r_i$ and $\hat \gamma_{a'_i}$. Note that $(\tilde
  \gamma_a)$ are pairwise disjoint.

  We finally can define our invariant spider $\Spider = (\gamma_p)_{p \in \PP(f)}$: for $b_1,
  \dots b_t$, we take $\gamma_{b_j} = \LL(\hat \gamma_{b_j})$, for the super
  attracting periodic points, we take $\gamma_{a_i}$ as the concatenation
  of the internal ray $r_i$ and $\LL(\hat \gamma_{a'_i})$. For the preperiodic
  points $c_1, \dots, c_k$, we chose for $\gamma_{c_i}$ a lift of $\tilde \gamma_{f(c_i)}$ in the sense of Lemma~\ref{lem:lifting}.

  Every spider leg of $\Spider$ is a pullback of a spider leg of $(\tilde
  \gamma_a)_{a \in \PP_{\text{per}}(f)}$ landing at different points.  So in
  fact the legs of $\Spider$ are disjoint. Also note that for every $p \in
  \PP_{\text{per}}(f)$, we have that $\tilde \gamma_p$ and $\gamma_p$ are
  isotopic relative to $\PP(f) \cup \{t\}$. For $b_j$ this is clear as $\tilde
  \gamma_{b_j} = \hat \gamma_{b_j}$ and $\gamma_{b_j} = \LL(\hat \gamma_{b_j})$
  are isotopic relative to $B \supset \PP(f) \cup \{t\}$. For $a_i$ we have
  that $\tilde \gamma_{a_i}$ and $\gamma_{a_i}$ are the concatenation of $r_i$
  and $\hat \gamma_{a'_i}$ or $\LL(\hat \gamma_{a'_i})$, respectively, and we
  can apply Lemma~\ref{lem:epsteinzieschang} to promote the isotopy between
  $\hat \gamma_{a'_i}$ and $\LL(\hat \gamma_{a'_i})$ relative to $B$ to an
  isotopy between $\tilde \gamma_{a_i}$ and $\gamma_{a_i}$. 

  Note that by construction, every leg of $\Spider$ landing at a periodic point
  is homotopic to a spider leg of $\tilde \gamma$.  For $q \in
  \PP_{\text{per}}(f)$, let $H_q$ be a homotopy between $\tilde \gamma_q$ and
  $\gamma_q$ relative to $B$.  For every $p \in \PP(f)$, let $H'_p$ be the lift
  of $H_{f(p)}$ at $p$ starting at $\gamma_p$ in the sense of
  Lemma~\ref{lem:lifting}. Call the spider leg at the end of the homotopy
  $\gamma'_p$. Then $\gamma'_p$ is a lift of $\gamma_{f(p)}$, so $\Spider'
  = (\gamma'_p)_{p \in \PP(f)}$ is a pullback spider that is homotopic to
  $\Spider$ legwise, so by Lemma~\ref{lem:epsteinzieschang} it is isotopic.
  This proves the theorem.

\end{proof}
\begin{remark}
  The construction of periodic spiders should be compared to the construction
  of dynamical partitions in \cite{mihaljevic2009topological} and
  \cite{PfrangThesis}. For a large class of entire functions defined in
  \cite{RRRS}, which in particular contains entire functions of finite order, it
  is possible to realize isotopy classes of periodic spiders via spiders that are periodic
  as arcs and not just up to isotopy, using dynamic rays. For general entire functions dynamic rays might not exists, but there is the notion of dreadlocks \cite{BeniniRempeDreadlocks}, that has been used in \cite{PfrangThesis} to define dynamical partitions for general entire functions. We are interested in spiders for the combinatorial understanding of the biset of entire functions, so considering them up to isotopy is good enough for us. Also, this notion is flexible enough for generalization to topological entire maps in the sense of \cite{HSS}.
  \label{rem:spiders}
\end{remark}
\begin{proof}[Proof of Theorem~\ref{thm:boundedactivity}]
  By Lemma~\ref{lem:imgiter} and Lemma~\ref{lem:periodicspiders} and passing to an iterate if needed, we have a fixed point $t$ of $f$ and a spider $\Spider$ for $\PP(f)$ that is isotopic to some pullback spider $\Spider'$ under $f$ relative to $\PP(f) \cup \left\{ t \right\}$.
  Note that since $\Spider$ and $\Spider'$ are isotopic, the associated generating sets of $\pi_1(\C \setminus \PP(f), t)$ are the same.
  By Lemma~\ref{lem:gpullback}, we have that the biset of $f$ is isomorphic to a biset of an autonomous dendroid automaton. 
  So by Lemma~\ref{lem:boundedactivity}, we are done.
\end{proof}
\begin{proof}[Proof of Theorem~\ref{thm:amenable}]
  If the iterated monodromy group of $f$ is amenable, then so it is its monodromy group, as it is a quotient of the iterated monodromy group. So it suffices to show that if the monodromy group is amenable, then so is the iterated monodromy group.

  We apply the criterion of \cite{Reinkegroup}. The first level action of the self-similar group in the previous proof is the monodromy action of some iterate$f^n$ of $f$. So it is the monodromy action of an entire function (in the Speiser class), so by \ref{lem:recurrence}, the first level action is recurrent. By assumption the monodromy action of $f$ is amenable, so by and Corollary~\ref{cor:amenableiterate}, so is the monodromy action of $f^n$. So the monodromy group of the first level of the self-similar group in the previous proof is amenable. So by \cite[Theorem~B]{Reinkegroup}, the iterated monodromy group is also amenable.
\end{proof}
Via the results of Corollary~\ref{cor:monodromystructurally} and Lemma~\ref{lem:spidercomp}, we see that compositions of structurally finite entire transcendental functions have amenable monodrony groups, so by Theorem~\ref{thm:amenable}, post-singularly finite functions in this class also have amenable monodromy group. As our running example $(1- z) \exp z$ is structurally finite, its iterated monodromy group is amenable.
\appendix
\section{Entire map with virtual free monodromy group}
In this appendix we outline how to obtain a entire map with monodromy group $C_2 * C_2 * C_2$.
Let $A = \left\{ a, b, c \right\}$ be an alphabet on three letters. Consider the subshift 
\begin{equation*}
  \Sigma = \left\{ w \in A^{\Z} \colon w_i \not= w_{i+1} \forall i \in \Z \right\}
\end{equation*}
of biinfinite words in $A$ avoiding the subwords $aa, bb, cc$. The language $L$ of $\Sigma$ corresponds to reduced words
in the presentation $\langle a, b, c | a^2, b^2, c^2 \rangle$ of $C_2 * C_2 * C_2$. As $\Sigma$ is transitive,
there is a word $w \in \Sigma$ such that $w$ contains every word of $L$. Fixing such a word $w$, for a letter $s \in A$ we consider
the following permutation on $\Z$
\begin{equation*}
  s(n) = \begin{cases}
    n + 1 \text{ if } w_n = s \\
    n - 1 \text{ if } w_{n-1} = s \\
    n \text{ otherwise.}
  \end{cases}
\end{equation*}
Note that the first and second case are exclusive by our avoidance of directly repeating letters. Direct computation shows that
this is in fact an involution. So we have an action of $C_2 * C_2 * C_2$ an $\Z$.

This action is faithful:
 if $g \in C_2 * C_2 * C_2$ is not trivial, we can represent it by an nonempty word $v \in L$. Now $v$ appears in $w$, say as $w_i\dots w_{i+|v|-1}$. In our action we have than $g(i+|v|-1) = i$, so $g$ acts non trivially.

It is not too hard to show that this is the monodromy action of an entire function. We can build a surface spread with Schreier graph indicated in
in Figure~\ref{fig:virtually_free_monodromy}. The associated extended line complex will be quasi-isometric to $\Z^2$. So by \cite{Doyle1984} the surface spread is given by an entire function. 
\begin{figure}
  \begin{tikzpicture}[->,>=stealth,shorten >=1pt,auto,node distance=1cm,semithick]
    \tikzstyle{every loop}=[<-]      
    \tikzstyle{point}=[circle,fill,inner sep=0pt, minimum size=4pt]
    \node (A) {$\cdots$};
    \node[point] (B) [right of=A] {};
    \node[point] (C) [right of=B] {};
    \node[point] (D) [right of=C] {};
    \node[point] (E) [right of=D] {};
    \node[point] (F) [right of=E] {};
    \node[point] (G) [right of=F] {};
    \node (H) [right of=G] {$\cdots$};
    \path[blue, bend right]
    (A) edge node {} (B)
    (B) edge node {} (A);
    \path[red, bend right]
    (B) edge node {} (C)
    (C) edge node {} (B);
    \path[green, bend right]
    (C) edge node {} (D)
    (D) edge node {} (C);
    \path[red, bend right]
    (D) edge node {} (E)
    (E) edge node {} (D);
    \path[blue, bend right]
    (E) edge node {} (F)
    (F) edge node {} (E);
    \path[green, bend right]
    (F) edge node {} (G)
    (G) edge node {} (F);
    \path[blue, bend right]
    (G) edge node {} (H)
    (H) edge node {} (G);
    \path[green] (B) edge [loop above] (B);
    \path[blue] (C) edge [loop above] (C);
    \path[blue] (D) edge [loop below] (D);
    \path[green] (E) edge [loop below] (E);
    \path[red] (F) edge [loop below] (F);
  \end{tikzpicture}
  \caption{Part of Schreier graph of an entire function with virtually free monodromy group}
  \label{fig:virtually_free_monodromy}
\end{figure}
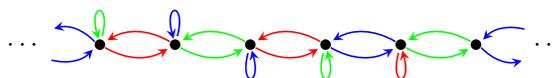
\bibliographystyle{alpha}
\bibliography{../bibfile.bib}

\begin{thebibliography}{RRRS11}

\bibitem[BD17]{BartholdiDudko2017}
Laurent Bartholdi and Dzmitry Dudko.
\newblock Algorithmic aspects of branched coverings.
\newblock {\em Ann. Fac. Sci. Toulouse Math. (6)}, 26(5):1219--1296, 2017.

\bibitem[BE95]{BergweilerEremenkoSingularities}
Walter Bergweiler and Alexandre Eremenko.
\newblock On the singularities of the inverse to a meromorphic function of
  finite order.
\newblock {\em Rev. Mat. Iberoamericana}, 11(2):355--373, 1995.

\bibitem[Ber91]{BergweilerPeriodic}
Walter Bergweiler.
\newblock Periodic points of entire functions: proof of a conjecture of
  {B}aker.
\newblock {\em Complex Variables Theory Appl.}, 17(1-2):57--72, 1991.

\bibitem[BKN10]{Bartholdi2010}
Laurent Bartholdi, Vadim~A. Kaimanovich, and Volodymyr Nekrashevych.
\newblock On amenability of automata groups.
\newblock {\em Duke Math. J.}, 154(3):575--598, 2010.

\bibitem[BN06]{Bartholdi2006}
Laurent Bartholdi and Volodymyr Nekrashevych.
\newblock Thurston equivalence of topological polynomials.
\newblock {\em Acta Math.}, 197(1):1--51, 2006.

\bibitem[BR20]{BeniniRempeDreadlocks}
Anna~Miriam Benini and Lasse Rempe.
\newblock A landing theorem for entire functions with bounded post-singular
  sets.
\newblock {\em Geom. Funct. Anal.}, 30(6):1465--1530, 2020.

\bibitem[Bus10]{Buser2010}
Peter Buser.
\newblock {\em Geometry and spectra of compact {R}iemann surfaces}.
\newblock Modern Birkh\"{a}user Classics. Birkh\"{a}user Boston, Ltd., Boston,
  MA, 2010.
\newblock Reprint of the 1992 edition.

\bibitem[Doy84]{Doyle1984}
Peter~G. Doyle.
\newblock Random walk on the {S}peiser graph of a {R}iemann surface.
\newblock {\em Bull. Amer. Math. Soc. (N.S.)}, 11(2):371--377, 1984.

\bibitem[{Elf}34]{Elfving}
Gustav {Elfving}.
\newblock {\"Uber eine Klasse von Riemannschen Fl\"achen und ihre
  Uniformisierung}.
\newblock {\em Acta Soc. Sci. Fennicae}, 2(3):1--60, 1934.

\bibitem[For91]{Forster}
Otto Forster.
\newblock {\em Lectures on {R}iemann surfaces}, volume~81 of {\em Graduate
  Texts in Mathematics}.
\newblock Springer-Verlag, New York, 1991.
\newblock Translated from the 1977 German original by Bruce Gilligan, Reprint
  of the 1981 English translation.

\bibitem[GO08]{GoldbergOstrovskii}
Anatoly~A. Goldberg and Iossif~V. Ostrovskii.
\newblock {\em Value distribution of meromorphic functions}, volume 236 of {\em
  Translations of Mathematical Monographs}.
\newblock American Mathematical Society, Providence, RI, 2008.
\newblock Translated from the 1970 Russian original by Mikhail Ostrovskii, With
  an appendix by Alexandre Eremenko and James K. Langley.

\bibitem[Hou79]{Houghton}
C.~H. Houghton.
\newblock The first cohomology of a group with permutation module coefficients.
\newblock {\em Arch. Math. (Basel)}, 31(3):254--258, 1978/79.

\bibitem[HSS09]{HSS}
John Hubbard, Dierk Schleicher, and Mitsuhiro Shishikura.
\newblock Exponential {T}hurston maps and limits of quadratic differentials.
\newblock {\em J. Amer. Math. Soc.}, 22(1):77--117, 2009.

\bibitem[MB09]{mihaljevic2009topological}
Helena Mihaljevi{\'c}-Brandt.
\newblock {\em Topological dynamics of transcendental entire functions}.
\newblock PhD thesis, University of Liverpool, UK, 2009.

\bibitem[MB10]{mihaljevic2010landing}
Helena Mihaljevi{\'c}-Brandt.
\newblock A landing theorem for dynamic rays of geometrically finite entire
  functions.
\newblock {\em Journal of the London Mathematical Society}, 81(3):696--714,
  2010.

\bibitem[Mer03]{merenkovthesis}
Sergiy~A. Merenkov.
\newblock {\em Determining biholomorphic type of a manifold using combinatorial
  and algebraic structures}.
\newblock PhD thesis, Purdue University, Indiana, United States, 2003.

\bibitem[Mil06]{MilnorBook}
John Milnor.
\newblock {\em Dynamics in one complex variable}, volume 160 of {\em Annals of
  Mathematics Studies}.
\newblock Princeton University Press, Princeton, NJ, third edition, 2006.

\bibitem[Nek05]{nekrashevych2005self}
Volodymyr Nekrashevych.
\newblock {\em Self-similar groups}, volume 117 of {\em Mathematical Surveys
  and Monographs}.
\newblock American Mathematical Society, Providence, RI, 2005.

\bibitem[Nek09]{Nekrashevych2009}
Volodymyr Nekrashevych.
\newblock Combinatorics of polynomial iterations.
\newblock In Dierk Schlei\-cher, editor, {\em Complex dynamics}, pages
  169--214. A K Peters, Wellesley, MA, 2009.

\bibitem[Pfr19]{PfrangThesis}
David Pfrang.
\newblock {\em Homotopy Hubbard Trees for Post-Singularly Finite Transcendental
  Entire Maps}.
\newblock PhD thesis, Jacobs University Bremen, Germany, 2019.

\bibitem[Rei20a]{Reinkegroup}
Bernhard Reinke.
\newblock Amenability of bounded automata groups on infinite alphabets.
\newblock arXiv: 2004.05029, April 2020.

\bibitem[Rei20b]{Reinkeexp}
Bernhard Reinke.
\newblock Iterated {Monodromy} {Groups} of {Exponential} {Maps}.
\newblock arXiv: 2004.12653, April 2020.

\bibitem[Rei21]{Reinkethesis}
Bernhard Reinke.
\newblock {\em Iterated Monodromy Groups and Transcendental Dynamics}.
\newblock PhD thesis, Aix-Marseille Université, France, 2021.

\bibitem[RRRS11]{RRRS}
G\"unter Rottenfusser, Johannes R\"uckert, Lasse Rempe, and Dierk Schleicher.
\newblock Dynamic rays of bounded-type entire functions.
\newblock {\em Ann. of Math. (2)}, 173(1):77--125, 2011.

\bibitem[Sch10]{schleicher2010dynamics}
Dierk Schleicher.
\newblock Dynamics of entire functions.
\newblock In Graziano Gentili, Jacques Guenot, and Giorgio Patrizio, editors,
  {\em Holomorphic dynamical systems}, volume 1998 of {\em Lecture Notes in
  Math.}, pages 295--339. Springer, Berlin, 2010.

\bibitem[Sid00]{sidki2000}
Said Sidki.
\newblock Automorphisms of one-rooted trees: growth, circuit structure, and
  acyclicity.
\newblock {\em J. Math. Sci. (New York)}, 100(1):1925--1943, 2000.
\newblock Algebra, 12.

\bibitem[Woe00]{woess2000random}
Wolfgang Woess.
\newblock {\em Random walks on infinite graphs and groups}, volume 138 of {\em
  Cambridge Tracts in Mathematics}.
\newblock Cambridge University Press, Cambridge, 2000.

\end{thebibliography}
\end{document}